\newtheorem{theorem}{Theorem}[section]
\newtheorem{lemma}[theorem]{Lemma}
\newtheorem{corollary}[theorem]{Corollary}
\newcommand{\vvv}[1]{{{\mathbf #1}}}  % Use for lower case
\newcommand{\VVV}[1]{{{\mathbf{#1}}}} % Use for upper case
\newcommand{\va}{{\vvv a}}
\newcommand{\vc}{{\vvv c}} 
\newcommand{\vg}{{\vvv g}}
\newcommand{\vp}{{\vvv p}} 
\newcommand{\vv}{{\vvv v}}
\newcommand{\vw}{{\vvv w}}
\newcommand{\vx}{{\vvv x}} 
\newcommand{\vC}{{\VVV C}} 
\newcommand{\vP}{{\VVV P}}
\newcommand{\vT}{{\VVV T}}
\newcommand{\vW}{{\VVV W}}
\newcommand{\vX}{{\VVV X}}
\newcommand{\vZ}{{\VVV Z}}
\newcommand{\vGa}{{\Gamma}} % bold Gamma not available in mathptm
\newcommand{\ah}{{\hat\va}}
\newcommand{\tvc}{\tilde\vc}
\newcommand{\N}{\mathbb{N}}
\newcommand{\Q}{\mathbb{Q}}
\newcommand{\R}{\mathbb{R}}
\newcommand{\B}{\mathbb{B}}
\newcommand{\Sph}{\mathbb{S}}
\newcommand{\Z}{\mathbb{Z}}
\newcommand{\cA}{\mathcal{A}}
\newcommand{\cB}{\mathcal{B}} 
\newcommand{\cJ}{\mathcal{J}} 
\newcommand{\cM}{\mathcal{M}} 
\newcommand{\cO}{\mathcal{O}} 
\newcommand{\cR}{\mathcal{R}} 
\newcommand{\cS}{\mathcal{S}}
\newcommand{\fG}{\mathfrak{G}}
\newcommand{\eps}{\epsilon}  % or \varepsilon, if you prefer.
\newcommand{\identity}{{\mathbf 1}}
\newcommand{\pro}{\mathfrak{p}}
\newcommand{\Rex}{{\cR_{\rm ex}}}
\newcommand{\Rin}{{\cR_{\rm in}}}
\newcommand{\fl}{}
\begin{document}

\title {The Zoo of Solitons for Curve Shortening in $\R^n$}
% [The Curve Shortening Soliton Zoo]
\author[Altschuler, Altschuler, Angenent \& Wu]{Dylan~J.~Altschuler,
  Steven~J.~Altschuler,\\
  Sigurd~B.~Angenent and Lani~F.~Wu}

\address{DJA: St.~Mark’s School
  of Texas, 10600 Preston Rd, Dallas, Texas}

\address{SJA,LWu: Green
  Center for Systems Biology, UT Southwestern Medical Center, Dallas,
  Texas}

\address{SBA: Mathematics Department, University of Wisconsin
  at Madison }

\date{\today}
\maketitle

\begin{center}
  \it On the occasion of Richard Hamilton's {\rm n}th birthday
\end{center}

\begin{abstract}
  We provide a detailed description of solutions of Curve Shortening
  in $\R^n$ that are invariant under some one-parameter symmetry group
  of the equation, paying particular attention to geometric properties
  of the curves, and the asymptotic properties of their ends.  We find
  generalized helices, and a connection with curve shortening on the
  unit sphere $\Sph^{n-1}$.  Expanding rotating solitons turn out to
  be asymptotic to generalized logarithmic spirals.  In terms of
  asymptotic properties of their ends the rotating shrinking solitons
  are most complicated.  We find that almost all of these solitons are
  asymptotic to circles.

  Many of the curve shortening solitons we discuss here are either
  space curves, or evolving space curves.  In addition to the figures
  in this paper, we have prepared a number of animations of the
  solitons, which can be viewed at
  \url{http://www.youtube.com/user/solitons2012/videos?view=1}.
\end{abstract}

%\tableofcontents

\section{Introduction}
\label{sec:introduction}

A family $\vc:(t_0, t_1)\times\R \to \R^n$ of smooth parametrized
curves in $\R^n$ evolves by \emph{Curve Shortening} if it satisfies
\begin{equation}
  \pro_{\vc_s}\bigl(\vc_t\bigr) = \vc_{ss}
  \label{eq:CS}
\end{equation}
where $s$ is arc length along the curve, and $\pro_{\va}$ denotes
orthogonal projection along the vector $\va$, so that
$\pro_{\vc_s}\vc_t$ is the component of the velocity vector $\vc_t$
normal to the curve.  If the curves are parametrized so that
$\vc_t\perp\vc_s$ at all times, then (\ref{eq:CS}) is equivalent with
\[
\vc_t = \vc_{ss}.
\]

Curve Shortening for compact curves has been extensively studied, especially in
the case of curves in the plane or on a surface (see
e.g.~\cite{2001ChouZhuCSBook}.)  Here we look at Curve Shortening for noncompact
curves in $\R^n$.  More specifically, we consider \emph{solitons},
i.e.~\emph{self-similar solutions}.  Since noncompact curves can be
geometrically much more complex than compact curves, it is to be expected that
Curve Shortening for non-compact curves will present a much wider range of
phenomena than for compact curves.

In geometric analysis, solitons are considered to be solutions of some
geometric evolution equation that are invariant under some subgroup of the
full symmetry group of the equation.  The equation (\ref{eq:CS}) is
invariant under Euclidean motions in space, translations in time, and
parabolic dilations of space-time.  If $\fG$ is the group generated by
these transformations of space-time, then we will classify and describe all
solutions of (\ref{eq:CS}) that are invariant under some one-parameter
subgroup of $\fG$.  In \S\ref{sec:thegroupG} we classify the one-parameter
subgroups of $\fG$ up to conjugation and find three essentially different
kinds of one-parameter subgroup and thus three types of self similar
solution of (\ref{eq:CS}) (see Table~\ref{tab:list-of-subgroups}).

\begin{description}
\item[Solutions with static symmetry]
  The first kind of solution we encounter (``category A'' in
  Table~\ref{tab:list-of-subgroups}) consist of curves that at each moment in
  time are invariant under some family of Euclidean motions.  The most general
  static symmetry that is preserved by Curve Shortening is of the form
  \begin{equation}
    \vC\mapsto e^{\theta\cM}\vC + \theta\vv,
    \qquad (\theta\in\R)
    \label{eq:static-symmetry}
  \end{equation}
  for any fixed skew symmetric matrix $\cM$ and any fixed vector $\vv$ with
  $\cM\vv=0$.  
\item[Rotating and translating solitons] The solutions in category B
  (Table~\ref{tab:list-of-subgroups}) are defined for all time $t\in\R$.  They
  are characterized by the fact that the curve $\vc(t, \cdot)$ at any time $t\in\R$ is
  obtained from the initial curve $\vc(0, \cdot)$ by a combination of translation and
  rotation, i.e.
  \[
  \vc(t, \xi) = e^{t\cA} \vc(0, \xi) + t\vv.
  \]
  Here $\vv$ is the translation velocity and $\cA = -\cA^t$ is an antisymmetric
  matrix representing the rate at which the soliton rotates.

\item[Rotating and dilating solitons] Solitons in category C
  (Table~\ref{tab:list-of-subgroups}) are characterized by the
  condition that the curve $\vc(t, \cdot)$ at any time $t$ is obtained
  from the curve $\vc(0)$ at time $t=0$ by a combination of dilation
  and rotation, i.e.
  \[
  \vc(t, \xi) = (1+2\alpha t)^{\frac12 + \frac{1}{2\alpha} \cA}\; \vc(0, \xi),
  \]
  where $\alpha\neq0$ is a constant and $\cA=-\cA^t$ is the skew symmetric
  matrix that describes the rotation.
  For $\alpha=0$ we have
  \[
  \vc(t,\xi) = e^{ t \cA}\; \vc(0, \xi)
  \]
  instead.

\end{description}

Some special well-known solutions of Curve Shortening include
\emph{straight lines} (which fall under all three categories);
\emph{circles} (which have static symmetry, and which also evolve by
dilating), the \emph{Grim Reaper,} i.e.~the graph of $y=-\log\cos x$
(which moves by steady translation upward along the $y$-axis); the
\emph{Yin-Yang curve} (a spiral in the plane which moves by steady
rotation); the \emph{Abresch-Langer curves} (plane curves that evolve
by shrinking dilation\footnote{Abresch and Langer derived and analyzed
  the ODE that describes planar dilating solitons.  A main point of
  their paper \cite{AbreschLanger86} was to show that while most of
  these curves are not compact, there is a discrete family of compact
  immersed curves among them.  Nonetheless, we will call both compact
  and non compact purely shrinking solitons ``Abresch-Langer
  curves.''}); the \emph{Brakke wedge} (a convex plane curve that
evolves by expanding dilation.)  See Figure~\ref{fig:3dsolitonzoo}.
The case of plane solitons has been fully analyzed by Halldorsson in
\cite{2010Halldorsson}, where one can also find many figures of plane
solitons.

\subsection*{Summary of main results}
The existence of special solutions with static symmetry was first reported in
\cite{Altschuler93,AltschulerGrayson92,Altschuler91} where it was noted that
under Curve Shortening a helix in $\R^3$ will remain a helix, and that this
helix will shrink to its central axis.  Since Curve Shortening preserves the
symmetries (\ref{eq:static-symmetry}), any solution of (\ref{eq:CS}) whose
initial curve is invariant under (\ref{eq:static-symmetry}) will at all times be
invariant under (\ref{eq:static-symmetry}).  The symmetries in
(\ref{eq:static-symmetry}) form a one parameter group so that curves invariant
under this group are exactly the orbits of the group action on $\R^n$.  The
Curve Shortening flow on curves invariant under (\ref{eq:static-symmetry}) can
thus be reduced to a finite system of ODEs.  In \S\ref{sec:shrinking-helices} we
derive this system and find that after a time change it is equivalent with the
first order linear system $\vx' = \cM^2 \vx$, which is trivially solved.
Translating the solutions of $\vx'=\cM^2\vx$ back to curves evolving by
(\ref{eq:CS}), we find two somewhat different cases.  For $\vv\neq0$ the
solutions behave as in the Altschuler-Grayson helix example, namely, in backward
time they are helix-like curves wound on a cylinder of radius $\cO(\sqrt{-t})$
while in forward time the curves converge at an exponential rate to the line
through the origin in the direction of $\vv$.  For $\vv=0$, solutions only
exist for $t\in(-\infty, T)$ for some $T$.  At time $t$ the curve is contained
in a sphere of radius $\sqrt{2(T-t)}$.  After rescaling to the unit sphere, the
solutions converge both as $t\to-\infty$ and as $t\to T$ to different great
circles.  In fact, after changing time, the rescaled solutions are solutions to
Curve Shortening for immersed curves on the unit sphere $\Sph^{n-1}$.

Starting in \S\ref{sec:solitons} we consider solitons. Any rotating-translating
or rotating-dilating soliton $\vc(t)$ is completely determined by its initial
curve $\vc(0)$.  A curve $\vC$ is the initial curve of a rotating-dilating or a
rotating-translating soliton exactly when it is the solution of the second order
ODE
\begin{equation}
  \vC_{ss} = \bigl(\alpha + \cA\bigr)\vC + \vv + \lambda \vC_s,
  \label{eq:soliton-profile}
\end{equation}
where $\lambda$ is determined by the condition $\vC_s \perp \vC_{ss}$.
The parameters  $\alpha$, $\cA$ and $\vv$ are as above.

The case of non-rotating solitons, where $\cA=0$, turns out not to present
any new curves in higher dimensions.  On one hand it is a well-known folk
theorem that all purely translating solitons are planar, so that up to
rotation and scaling they are equivalent to the Grim Reaper (we prove a
more general statement in Lemmas \ref{lem:dilating-rotating-is-2D-in-NA},
\ref{lem:translating-rotating-is-2D-in-NA}, and \ref{lem:purely-rotatin}).
On the other hand, it is also known that all purely dilating solitons are
planar.  They are therefore copies of the Abresch-Langer solitons. Again,
Lemma \ref{lem:dilating-rotating-is-2D-in-NA} presents a generalization of
this statement.

The classification in \S\ref{sec:thegroupG} of one-parameter subgroups of the
complete group $\fG$ of symmetries of Curve Shortening shows that we do not have
to consider solitons that simultaneously dilate and translate.  While the
soliton equation (\ref{eq:soliton-profile}) certainly has solutions when
$\alpha\neq0$ and $\vv\neq0$, these curves turn out to be non-translating
solitons whose center of dilation and rotation is not at the origin.  We
therefore only consider rotating-translating or rotating-dilating solitons.

In studying the global behaviour of the curve we distinguish between unbounded
ends of a soliton, and ends that stay within a bounded region.  We show in
\S\ref{sec:distance-to-Origin} that the unbounded ends can be decomposed into
high curvature and low curvature parts.  The high curvature parts are
characterized by the condition that the tangent and the vector $\va =
(\alpha+\cA)\vC+\vv$ are not almost parallel.  These high curvature arcs are
short (their length is $\cO(\|\va\|^{-1})$), and the further away they are from
the origin, the better they are approximated by a small copy of the Grim
Reaper. (See Figure~\ref{fig:GR-in-the-distance}.) On the remaining low
curvature arcs, which comprise most of the soliton, the tangent $\vC_s$ is
almost parallel to $\va$, so that these arcs are solutions of
\begin{equation}
  \lambda \frac{d\vC}{ds} = (\alpha+\cA)\vC + \vv + \mbox{ a small error term}.
  \label{eq:low-curvature-approximate-ode}
\end{equation}
After reparametrization this equation is linear, and easily solved.  It
should provide a good description of ``medium length'' sections of
solitons. This description can in certain cases be shown to be globally
correct (see below, and Lemma~\ref{lem:alpha-pos-R-invariant}.)

Of all the rotating-dilating solitons, the ones that rotate and expand are
simplest to describe.  The distance to the origin from a point on a
rotating-expanding soliton is a function on the curve that has exactly one
global minimum, and no other critical points.  On both ends of the curve
the distance to the origin grows without bound.  Both ends are low
curvature arcs, and using the approximate equation
\ref{eq:low-curvature-approximate-ode} we show that the ends are asymptotic
to generalized logarithmic spirals (Lemma \ref{lem:alpha-pos-R-invariant}).
In the very special case where $\cA=0$ these spirals reduce to straight
lines, and the solitons are just the Brakke wedges.

For purely rotating solitons the approximate equation
(\ref{eq:low-curvature-approximate-ode}) (with $\alpha=0$, $\vv=0$) suggests
that the ends will merely rotate.  However we find in
\S\ref{sec:distance-to-Origin} that the ends must be unbounded, which shows that
the ends of purely rotating solitons slowly ``spiral off to infinity.''  If the
null space $N(\cA)$ is non-empty then either the soliton is contained in a
proper subspace of $\R^n$, or else $N(\cA)$ contains a line $\ell$ such that the
orthogonal projection of the soliton onto $\ell$ is injective
(Lemma~\ref{lem:purely-rotatin}).  This must always be the case in odd
dimensions; e.g.~in $\R^3$ the null space $N(\cA)$ is the axis of the rotation
generated by $\cA$, and Lemma~\ref{lem:purely-rotatin} says that the soliton is
either planar, or a graph over the axis of rotation.

\begin{figure}[b]
  \begin{center}
    \includegraphics[width=\textwidth]{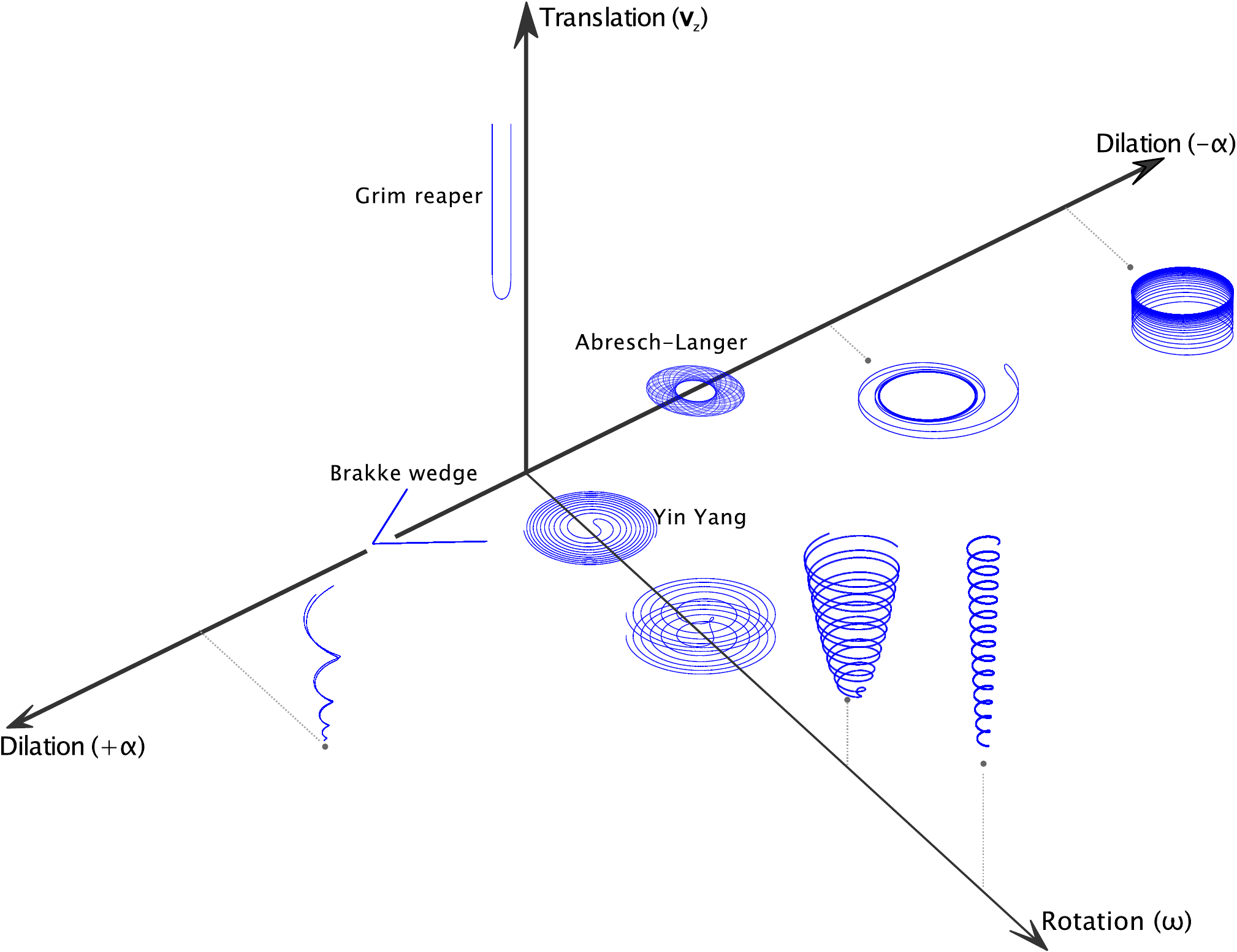}
  \end{center}
  \caption{For three dimensional solitons the soliton equation
  (\ref{eq:soliton-profile}) contains three essential parameters: the dilation
  rate $\alpha$, the rotation rate $\omega$, and the translation velocity
  $v_{z}$. For each fixed choice of $( \alpha, \omega, v_{z})$ many solitons
  exist, namely, one through each point $\vC\in\R^{3}$ and unit tangent vector
  $\vT$. This figure shows a few solitons in $\R^{3}$.  The named solitons are
  well known planar examples.}
  \label{fig:3dsolitonzoo}
\end{figure}

A similar result holds for translating-rotating solitons ($\alpha=0$,
$\cA\vv=0$, $\vv\neq0$.)  Here the projection of the soliton onto its
translation axis (the line through the origin in the direction of $\vv$) is
bijective, or else the function $\vx \mapsto \langle\vv, \vx\rangle$ has a
unique minimum on the soliton.  In the latter case the projection onto the
translation axis is two-to-one except at the point where $\langle\vv,
\vx\rangle$ is minimal (see Figure~\ref{fig:solitons-and-nullspace}.) The
quantity $\langle\vv, \vx\rangle$ is unbounded along each end.  If $\langle\vv,
\vx\rangle\to \infty$ along an end, then along this end the distance to the
translation axis must also become unbounded.  Any end along which $\langle\vv,
\vx\rangle\to - \infty$ converges to the translation axis
(Lemma~\ref{lem:ends-of-translating-rotating}).

The most complicated solitons are the rotating shrinking solitons ($\alpha<0$,
$\cA\neq0$, $\vv=0$).  First, given any $\cA$ any circle of radius
$1/\sqrt{-\alpha}$ that is invariant under the rotations $e^{\theta\cA}$ is
trivially a soliton that dilates and rotates with parameters $(\alpha, \cA)$.
Similarly, any Abresch-Langer curve in $N(\cA)$ is a dilating-rotating soliton
with the same parameters.  In \S\ref{sec:global-behaviour} we show that any
bounded end of a rotating shrinking soliton must converge to these trivial
cases, i.e.~$e^{\theta\cA}$ invariant circles, or Abresch-Langer curves in
$N(\cA)$.

The class of rotating shrinking solitons also includes the rotating
solitons for Curve Shortening of immersed curves on the sphere
$\Sph^{n-1}$.  These were studied by Hungerb\"uhler and Smoczyk in
\cite{2000HungerBuehlerSmoczyk} (in \cite{2000HungerBuehlerSmoczyk}
solitons on other surfaces were also considered).  The connection is
explained in \S\ref{sec:CS-Sn-1} and \S\ref{sec:rotators-on-sphere}.

In Lemmas~\ref{lem:dVdtau} and \ref{lem:V-critical-extended} we show that a
shrinking-rotating soliton can have at most one unbounded end, unless it is a
straight line in $N(\cA)$.  If a rotating-shrinking soliton has an unbounded
end, then we find that there are two distinct cases.  One possibility is that
the distance from points on the end to $N(\cA)$ never falls below a certain
lower bound (specified in Lemma~\ref{lem:alpha-pos-R-invariant}).  In this case
the end is asymptotic to a generalized logarithmic spiral.  In
Lemma~\ref{lem:alpha-neg-R-isolating-block} we prove existence of such solitons.
The other possibility is that the unbounded end of the soliton stays within a
fixed distance of $N(\cA)$.  In this case it must accumulate on Abresch-Langer
curves and straight lines in $N(\cA)$.  Although we were initially inclined to
conjecture that this last case could not occur, preliminary computations
involving matched asymptotic expansions, which we intend to present in a
forthcoming paper, indicate that that such exotic solitons may actually exist.

\bigskip

\textbf{Acknowledgments. }  The second author (SJA) began exploring the world
of 3D curve shortening solitons together with John Sullivan while at the IMA
in 1990.  This present paper owes much of its inspiration to those preliminary 
investigations.

SJA was supported by NIH GM071794 and the Welch Foundation I-1619.
LFW was supported by NIH GM081549 and the Welch Foundation I-1644.
SBA was supported by NSF on grant DMS0705431.

%%% In \cite{Angenent91} the existence of rotating shrinking solitons was
%%% observed and exploited in a proof of a Harnack inequality for compact
%%% solutions of Curve Shortening.
%%% 

\section{The symmetry group for Curve Shortening and its one-parameter
subgroups}
\label{sec:thegroupG}
The Curve Shortening equation is invariant under translations in space-time,
rotations in space, and, being a heat equation, parabolic dilations of
space-time.  In this section we classify the one-parameter subgroups of the full
symmetry group of Curve Shortening.  We then describe what solutions invariant
under each of these subgroups look like, and derive the differential equations
they satisfy.

Let $\fG$ be the group of transformations of space-time $\R^n\times\R$
of the form
\begin{equation}
  g:\R^n\times\R \to \R^n\times\R, \quad
  g(\vx, t) = \bigl(
  \Theta \cR\vx + \va,\; \Theta^2 t + b
  \bigr)
  \label{eq:general-symmetry}
\end{equation}
where $\Theta>0$, $b\in\R$, $\cR\in O(n,\R)$, and $\va\in\R^n$.  Any
transformation $g\in\fG$ leaves (\ref{eq:CS}) invariant, i.e.\ it maps
solutions of (\ref{eq:CS}) to solutions of (\ref{eq:CS}).

Let
\begin{equation}
  g_\eps(\vx,t)  =
  \bigl(\Theta(\eps)\cR(\eps)\vx+\va(\eps),
  \Theta(\eps)^2t+b(\eps)\bigr)
  \label{eq:one-parameter-sgp}
\end{equation}
be a one-parameter subgroup of $\fG$.  Such a subgroup is completely
determined by the vector field
\[
V(\vx, t) = \left.\frac{\partial g_\eps(\vx, t)} {\partial
\eps}\right|_{\eps=0} =\bigl( ( \theta + \cM) \vx + \vv,\; 2
\theta t + w \bigr),
\]
where $\theta, \vv, w$, and $\cM$ are defined by
\[
\left.\frac{d} {d\eps}\right|_{\eps=0} \bigl(\Theta(\eps), \va(\eps),
b(\eps), \cR(\eps)\bigr) = \bigl( {\theta}, {\vv}, {w}, {\cM}\bigr).
\]
Specifically, the orbit $g_\eps(\vx_0, t_0) = (\vx(\eps), t(\eps))$
through any point $(\vx_0, t_0)$ is the solution of the first-order
differential equation $(\dot \vx, \dot t) = V(\vx, t)$, where the dot
denotes differentiation with respect to $\eps$.  Thus, the general one
parameter subgroup of $\fG$ is obtained by integrating the
differential equations
\begin{equation}
  \dot \vx = (  \theta +   \cM) \vx +   \vv, \qquad
  \dot t = 2 \theta t +   w
  \label{eq:x-t-flow}
\end{equation}
for suitable choice of constants $ \theta \in\R$, $ w \in\R$, $
\vv\in\R^n$ and $ \cM\in\mathfrak{so}(n, \R)$ (i.e.\ $
\cM:\R^n\to\R^n$ is antisymmetric).

Instead of considering the most general one-parameter subgroup generated by
(\ref{eq:x-t-flow}), one can simplify the problem by conjugating the subgroup
with some $h\in\fG$, i.e.~by applying a linear change of coordinates $(\vx, t) =
h(\hat{\vx}, \hat t)$.  Below we choose simple representatives in each conjugacy
class of subgroups.

\subsection*{Choice of parameter for the subgroup}
If $(\theta, \vv, w, \cM)$ generates the subgroup $\{g_\eps\}$, then
the one parameter subgroup $\{h_\eps\}$ generated by any multiple
$(\lambda \theta, \lambda \vv, \lambda w, \lambda \cM)$ is given by
$h_\eps = g_{\lambda \eps}$.  Therefore multiplying the vector field
$V$ with a constant does not change the subgroup it generates.  In
particular, we may always assume that
\begin{equation}
  w\geq 0.
  \label{eq:can-choose-w-nonneg}
\end{equation}
\subsection*{Rotating the coordinates}
We can rotate in space and substitute $\vx = \cS\hat\vx$, $t=\hat t$,
for any orthogonal $\cS\in O(n,\R)$.  Then $\hat t$ and $t$ satisfy
the same equation, while $\hat\vx$ satisfies
\[
d\hat\vx/d\eps = \bigl(\theta + \cS^{-1}\cM\cS\bigr) \hat \vx +
\cS^{-1}\vv.
\]
Since $\cM:\R^n\to\R^n$ is antisymmetric, its rank is even, say
rank$(\cM)=2m$, and there is an $\cM$-invariant orthogonal
decomposition
\begin{equation}
  \R^n=E_1\oplus E_2 \oplus \cdots \oplus E_m \oplus N(\cM)
  \label{eq:Rn-cA-invariant-splitting}
\end{equation}
in which each $E_j$ is two dimensional.  This
allows us to choose the rotation $\cS$ so that after conjugation with
$\cS$ the matrix of $\cM$ has the form
\begin{equation}
  \cM =
  \left(
  \begin{array}{cccc}
    \omega_1\cJ & & 0 & \\
    &\ddots && \\
    0&&\omega_m\cJ& \\
    &&&{\bf 0}_{n-2m}
  \end{array}
  \right)
  \mbox{ with }
  \cJ =
  \left(
  \begin{array}{cc}
    0 & -1 \\ 1 & 0
  \end{array}
  \right)
  \label{eq:rotation-normal-form}
\end{equation}
where the rotation rates $\omega _j$ are positive and ordered
\[
0< \omega_1 \leq \omega_2 \leq \cdots\leq \omega_m,
\]
and where ${\bf 0}_{n-2m}$ is the $(n-2m)\times (n-2m)$ zero matrix.

\subsection*{Translating in space}
After rotating space, we can relocate the origin in space to $\vp$ and
substitute $\vx=\hat{\vx}+\vp$, $t=\hat t$.  The equations
(\ref{eq:x-t-flow}) then become
\[
d\hat\vx/d\eps = ( \theta + \cM) \hat\vx + \hat \vv, \qquad d\hat
t/d\eps = 2 \theta \hat t + w
\]
where
\[
\hat\vv = \vv + ( \theta + \cM) \vp
\]

If $\theta\neq0$ then, since $ \cM$ is antisymmetric, $ \theta + \cM$
is invertible, and so we can choose $\vp = - ( \theta + \cM)^{-1} \vv$
to make $\hat\vv=0$.  In short: if $ \theta\neq0$ then, after choosing
a different origin in space, we may assume that $ \vv=0$.

If $ \theta=0$ then $\hat\vv = \vv + \cM\vp$, so that we can still
choose $\vp$ to make $\hat\vv\perp\mbox{Range}( \cM)$.  Since $\cM$ is
antisymmetric, this is the same as choosing $\vp$ so that
$\cM(\hat\vv) = 0$.

\subsection*{Translating in time}
Having chosen the origin in space, we can translate in time by setting
$t = \vartheta + \hat t$.  This leaves the equation for $\vx$ in
(\ref{eq:x-t-flow}) unchanged and turns the $t$-equation into
\[
d\hat t/d\eps = 2\theta \hat t + w + 2\theta\vartheta = 2\theta\hat t
+ \hat w.
\]
When $ \theta=0$ this has no effect at all, but if $ \theta\neq0$ we
can choose $\vartheta = - w/ 2\theta$ so that $\hat w=0$.

\begin{table}[t]\centering
  \small
  \begin{tabular}{ccccl}
    \toprule
    \sffamily Cat.
    &\sffamily Parameters
    &$(\dot\vx, \dot t)$
    &\parbox{82pt}{
    $g_\eps(\vx_0, t_0)$\\
    \null\quad$=\bigl(\vx(\eps), t(\eps)\bigr)$
    }
    & \sffamily Description
    \\%%%%%%%%%%%%%%%%%%%%%%%%%%%%%%%%%%%%%%%%%%%%%%%%%%%%%%
    \midrule\rule[-16pt]{0pt}{36pt}
    \sffamily A
    &\parbox{40pt}{\centering $\theta=0$ \\$w=0$ \\ $  \cM\vv=0$}
    &\parbox{68pt}{\centering $\dot\vx = \cM\vx +  \vv$ \\
    $\dot t =0$}
    &\parbox{84pt}{\centering $\vx(\eps) = e^{\eps \cM}\vx_0 +
    \eps \vv$\\
    $t(\eps) = t_0$}
    &\parbox{76pt}{\sffamily Circles and \\ Shrinking Helices}
    \\%%%%%%%%%%%%%%%%%%%%%%%%%%%%%%%%%%%%%%%%%%%%%%%%%%%%%%
    \midrule\rule[-16pt]{0pt}{36pt}
    \sffamily B
    &\parbox{40pt}{\centering $  \theta=0$\\ $  w=1$\\ $ \cM \vv=0$}
    &\parbox{68pt}{\centering $\dot\vx =  \cM\vx +  \vv$\\
    $\dot t =1$}
    &\parbox{84pt}
    {\centering $\vx(\eps) = e^{\eps \cM}\vx_0 +\eps \vv$\\
    $t(\eps) = t_0 + \eps$}
    &\parbox{76pt}{\sffamily Translating and\\ Rotating Solitons}
    \\%%%%%%%%%%%%%%%%%%%%%%%%%%%%%%%%%%%%%%%%%%%%%%%%%%%%%% 
    \midrule\rule[-16pt]{0pt}{36pt}
    \sffamily C
    &\parbox{40pt}{\centering $\theta=1$\\      $  w=0$\\ $ \vv=0$}
    &\parbox{68pt}{\centering $\dot\vx = \vx +  \cM\vx $\\
    $\dot t =2t$}
    &\parbox{84pt}{\centering $\vx(\eps) = e^\eps e^{\eps \cM}\vx$\\
    $t(\eps) = e^{2\eps}t_0$}
    &\parbox{76pt}{\sffamily Dilating and\\ Rotating Solitons}
    \\%%%%%%%%%%%%%%%%%%%%%%%%%%%%%%%%%%%%%%%%%%%%%%%%%%%%%%
    \bottomrule
  \end{tabular}
  \medskip
  \caption{A taxonomy of one-parameter subgroups of $\fG$.  Up to
  coordinate changes there are three categories of subgroups of $\fG$.}
  \label{tab:list-of-subgroups}
\end{table}

\subsection*{Dilating space and time}
Finally, we can apply a parabolic dilation, $\vx = \vartheta \hat\vx$,
$t = \vartheta^2\hat t$, with $\vartheta>0$.  If $ \theta\neq0$ then
we have already arranged that $ \vv=0$, $ w=0$ by appropriately
choosing our origin in space-time.  The equations (\ref{eq:x-t-flow})
are in this case linear homogeneous and thus the dilation has no
effect.

If $ \theta=0$ then the dilation turns the $t$-equation into $d\hat
t/d\eps = \vartheta^{-2} w = \hat w$.  Since $w\geq 0$ by assumption
(\ref{eq:can-choose-w-nonneg}) we are left with two options: either $
w>0$ and we can arrange $\hat w=1$ by choosing $\vartheta= w^{-1/2}$,
or $ w=0$.

\bigskip

We conclude from the above discussion that after applying a suitable
coordinate transformation we may always assume that the parameters $
\theta, \vv, w, \cM$ belong to one of three categories, namely,
\textbf{A}: $ \theta= w=0$, $ \cM \vv=0$; \textbf{B}: $ \theta=0$, $
w=1$, $ \cM \vv=0$; and \textbf{C}: $ \theta\neq 0$, $ w=0$, $ \vv=0$.
(See Table~\ref{tab:list-of-subgroups}.)

\section{Circles and Shrinking Helices}
\label{sec:shrinking-helices}
Let $\{g_\eps : \eps\in\R\}$ be a one-parameter subgroup corresponding to
category A and let $\{\vc(t):t\in\R\}$ be a family of curves invariant
under $g_\eps$.  If $(\vC_0, t_0)$ is a point on one of these curves, then
the $g_\eps$ orbit through $(\vC_0, t_0)$ lies in the curve.  Since the
$g_\eps$ do not change the time variable $t$, the $g_\eps$ orbit of any
point sweeps out a curve at time $t$ parametrized by $\eps$.  Therefore a
$g_\eps$ invariant family of curves can be parametrized by choosing one
point $\vC_0(t)$ on the curve for each time $t$, and then letting the group
$g_\eps$ act on that point.  The result is
\begin{equation}
  \vc(\eps, t)  = e^{\eps \cM}\vC_0(t) + \eps \vv.
  \label{eq:helix-parametrization}
\end{equation}
If the vector $ \vv\neq0$ then one can always choose $\vC_0(t)\perp
\vv$.

Recall that we had chosen coordinates in which the matrix of $\cM$ is
given by (\ref{eq:rotation-normal-form}).  The exponential
$e^{\eps\cM}$ therefore acts on $\R^n$ by rotating the $(x_{2j-1},
x_{2j})$ component by an angle $\omega_j\eps$ for $j=1, \dots, m$.  In
particular, in two and three dimensions $e^{\eps\cM}$ performs a
rotation by an angle $\omega_1\eps$ around the origin in $\R^2$ or the
$x_3$-axis in $\R^3$, and we see that $\vc(\eps, t)$ traces out the
circle in the $x_1x_2$-plane through $\vC_0$ if $\vv=0$, or a helix
around the $x_3$-axis if $\vv\neq0$.

In higher dimensions the curve traced out by $\vc(\eps, t)$ depends on
the rotation frequencies $\omega_1, \dots, \omega_m$.  If they are
integer multiples of a common frequency, i.e.~if $\omega_j =
p_j\Omega$ ($\Omega>0$, $p_j\in\N$, $1\leq j\leq m$), then the
exponential $e^{\eps\cM}$ is $2\pi/\Omega$ periodic.  If $\vv=0$ then
the curve $\vc(\eps) = e^{\eps\cM}\vC_0$ is a closed curve on the
sphere with radius $\|\vC_0\|$ in $\R^{2m}\times\{0\} \subset \R^n$.
For instance, for $n=4$, $\omega_1=p_1$, $\omega_2=p_2$, with $p_1,
p_2\in\N$, and for $\vC_0 = (r_1, 0, r_2, 0)$ with $r_1^2+r_2^2=1$,
the curve
\[
\vc(\eps) = \bigl(r_1 \cos p_1\eps, r_1\sin p_1\eps, r_2 \cos p_2\eps,
r_2\sin p_2\eps\bigr)
\]
is a $(p_1, p_2)$ torus knot in $\Sph^3$.

If the rotation rates $\omega_j$ are not multiples of some common frequency,
then the curve $\vc(\eps) = e^{\eps\cM}\vC_0$ is not closed, but instead is
dense in a torus whose dimension is the largest number of $\omega_j$ that are
independent over the rationals.

If $\vv\neq0$, then the projection of $\vc(\eps) = e^{\eps\cM}\vC_0+\eps\vv$
onto $\R^{2m}\times\{0\}$ is the curve $\tilde \vc(\eps) = e^{\eps\cM}\vC_0$.
Therefore $\vc(\eps) = e^{\eps\cM}\vC_0+\eps\vv$ describes a generalized helix
over $\tilde \vc$.

Suppose now that $\vc(\eps, t)$ given by (\ref{eq:helix-parametrization}) is a
family of curves evolving by Curve Shortening that is invariant under the one
parameter group $g_\eps$.  Then Curve Shortening implies a differential equation
for the time evolution of $\vC_0(t)$, which we now derive.

Direct computation shows that
\[
\vc_\eps = \cM e^{\eps \cM}\vC_0(t) + \vv,\quad \vc_{\eps\eps} = \cM^2
e^{\eps \cM}\vC_{0}(t),\quad \vc_t = e^{\eps \cM}\vC_0'(t),
\]
where ${}'$ is the derivative with respect to time.

Since $\cM$ is antisymmetric and since $ \cM \vv=0$ we have $e^{\eps \cM} \vv =
\vv$.  From there we see that the two terms in $\vc_\eps$ are orthogonal.
Therefore
\[
\|\vc_\eps\|^2 = \|\vv\|^2 + \|\cM\vC_0(t)\|^2.
\]
Hence the arc length derivative and derivative with respect to $\eps$ are
related by
\[
\frac {d}{ds} = \frac1{\|\vc_\eps\|} \frac d{d\eps} = \frac {1}{\sqrt{
\|\vv\|^2 + \|\cM\vC_0(t)\|^2}} \frac d{d\eps} .
\]
This allows one to compute
\[
\vc_{ss} = \frac{\vc_{\eps\eps}} {\|\vc_\eps\|^2} =
e^{\eps\cM}\frac{\cM^2 \vC_0(t)}{\|\vv\|^2 + \|\cM\vC_0(t)\|^2}.
\]
Together with $\vc_t = e^{\eps\cM}\vC_0'(t)$ this tells us that the family of
curves (\ref{eq:helix-parametrization}) evolves by Curve Shortening if
\[
\vc_t = \vc_{ss}, \mbox{ i.e.\ if } e^{\eps\cM}\vC_0'(t) =
e^{\eps\cM}\frac{\cM^2 \vC_0(t)}{\|\vv\|^2 + \|\cM\vC_0(t)\|^2}.
\]
Cancelling $e^{\eps\cM}$ on both sides we see that (\ref{eq:CS}) implies
\begin{equation}
  \vC_0'(t) = \frac{\cM^2 \vC_0(t)}{\|\vv\|^2 + \|\cM\vC_0(t)\|^2}.
  \label{eq:cs-for-helices}
\end{equation}
This equation is essentially linear, after a time-change.  If one introduces a
new time variable $\tau$, related to $t$ by
\begin{equation}
  \frac {dt}{d\tau} = \|\vv\|^2 + \|\cM\vC_0(t)\|^2,
  \label{eq:tau-t-relation}
\end{equation}
then $\vC_0$, as a function of $\tau$ satisfies
\[
\frac{d\vC_0} {d\tau} = \cM^2 \vC_0.
\]
Since $\cM$ is given by (\ref{eq:rotation-normal-form}), we have
\[
\cM^2 =
\left(
\begin{array}{cccc}
  -\omega_1^2 \identity_{\R^2} &&&\\
  &\ddots&&\\
  &&-\omega_m^2\identity_{\R^2}&\\
  &&&0\\
\end{array}
\right), \qquad \identity_{\R^2} =
\left(
\begin{array}{cc}
  1 & 0 \\ 0 & 1
\end{array}
\right)
\]
It follows that
\begin{equation}
  \vC_0(\tau) = \sum_{j=1}^m  e^{-\omega_j^2\tau} \vC_j
  \label{eq:helix-radius-evolution}
\end{equation}
for certain constant vectors $\vC_j\in E_j$ ($E_j$ as in
(\ref{eq:Rn-cA-invariant-splitting})).  To eliminate $\tau$, we observe that
\[
\|\vv\|^2 + \|\cM\vC_0\|^2 = \|\vv\|^2 + \sum_{j=1}^m \omega_j^2
e^{-2\omega_j^2\tau}\|\vC_j\|^2
\]
and integrate (\ref{eq:tau-t-relation}) to get
\begin{equation}
  t = \|\vv\|^2 \tau
  - \sum_{j=1}^m \frac{\omega_j}{2}e^{-2\omega_j^2\tau} \|\vC_j\|^2.
  \label{eq:helices-t-tau-relation}
\end{equation}
One cannot explicitly solve this equation for $t$, but one can find approximate
solutions when $\tau\to\pm\infty$.  Assume that $\omega_1\leq\omega_2\leq \cdots
\leq \omega_m$, and let $k$ and $l$ be the smallest and larges values of $j$
respectively for which $\vC_j\neq0$.

As $\tau\to -\infty$ we can approximate $t$ by the largest term in
(\ref{eq:helices-t-tau-relation}), giving us
\[
t= -\bigl(\frac{\omega_l}{2} + o(1)\bigr)
e^{-2\omega_l^2\tau}\|\vC_l\|^2,
\]
while, by (\ref{eq:helix-radius-evolution}),
\[
\vC_0 = e^{-\omega_l^2 \tau} \vC_l + o\bigl(e^{-\omega_l^2
\tau}\bigr).
\]
These expansions for $\vC_0$ and $t$ show that as $\tau\to -\infty$, the actual
time $t$ also goes to $-\infty$, and
\begin{equation}
  \vC_0(t) = \sqrt{-\frac{2 t}{\omega_l}}\; \frac{\vC_l} {\|\vC_l\|}
  + o\bigl(\sqrt{-t}\bigr).
  \label{eq:helix-at-t-neginfty}
\end{equation}
Thus for $t\to-\infty$ the rescaled curves $(-t)^{-1/2}\vc(\eps, t)$ converge to
a circle with radius $\sqrt{2/\omega_l}$ in the $E_{l}$ plane.

Examples: If $\vC_l$ is the only non zero $\vC_j$, then $(-t)^{-1/2}\vc$
actually coincides with this circle, but if there are other nonzero terms in
(\ref{eq:helix-radius-evolution}), then when the $\omega_j=p_j\Omega$ are
integer multiples of a common rate $\Omega$, the normalized curve
$(-t)^{-1/2}\vc(\cdot, t)$ is a closed curve converging to a multiple cover of
the circle with radius $\sqrt{2\omega_l}$; if at least two $\omega_j$ with
$\vC_j\neq0$ are independent over $\Q$, then $(-t)^{-1/2} \vc(\cdot, t)$ is
densely wound on a torus, which converges to the circle in $E_l$ with radius
$\sqrt{2\omega_l}$ as $t\to-\infty$.

The behaviour for $\tau\to\infty$ depends on whether or not $\vv=0$.

If $\vv\neq0$, then (\ref{eq:helices-t-tau-relation}) implies that
\[
t = \bigl(\|\vv\|^2+o(1)\bigr) \tau, \qquad (\tau\to\infty).
\]
By (\ref{eq:helix-radius-evolution}), $\vC_0(t)$ decays exponentially as
$t\to\infty$, so that (\ref{eq:helix-parametrization}) tells us that the curve
$\vc(\eps, t)$ converges at an exponential rate to the straight line $\vc(\eps,
\infty) = \eps\vv$.

On the other hand, if $\vv=0$, then (\ref{eq:helices-t-tau-relation}) implies
that
\[
t = -\bigl(\frac{\omega_k}2 + o(1) \bigr) e^{-\omega_k^2\tau}.
\]
Reasoning as above we find that
\begin{equation}
  \vC_0(t) = \sqrt{-\frac{2 t}{\omega_k}}\; \frac{\vC_k} {\|\vC_k\|} +
  o\bigl(\sqrt{-t}\bigr).
  \label{eq:vC0-at-t-zero}
\end{equation}
Thus as $t\nearrow 0$ the rescaled curves $(-t)^{-1/2}\vc(\eps, t)$ converge to
a circle with radius $\sqrt{2/\omega_k}$ in the $E_k$ plane.

\subsection{Connection with Curve Shortening on $\Sph^{n-1}$}
\label{sec:CS-Sn-1}

The solutions we have found for $\vv=0$ are related equivalent to certain
special solutions of Curve Shortening on $\Sph^{n-1}$.

\begin{lemma}
  If $\vC_0(t)$ is a solution of (\ref{eq:cs-for-helices}) and if $\vv=0$, then
  $\|\vC_0(t)\|^2 = 2(T-t)$ for some $T\in \R$.  The corresponding curve $\vc(t,
  \eps) = e^{\eps\cM}\vC_0(t)$ therefore lies on a sphere of radius $\sqrt{2(T-t)}$.
\end{lemma}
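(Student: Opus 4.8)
The plan is to show directly that the squared distance to the origin, $\|\vC_0(t)\|^2$, decreases at the constant rate $2$, which immediately gives the stated formula. First I would differentiate $\|\vC_0\|^2 = \langle \vC_0, \vC_0\rangle$ and substitute the evolution equation (\ref{eq:cs-for-helices}) in the case $\vv = 0$:
\[
\frac{d}{dt}\|\vC_0\|^2 = 2\langle \vC_0, \vC_0'\rangle = \frac{2\,\langle \vC_0, \cM^2\vC_0\rangle}{\|\cM\vC_0\|^2}.
\]

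The key step is the identity $\langle \vC_0, \cM^2 \vC_0\rangle = -\|\cM \vC_0\|^2$, which follows from the antisymmetry of $\cM$: moving one factor of $\cM$ across the inner product gives $\langle \vC_0, \cM^2\vC_0\rangle = \langle \cM^t\vC_0, \cM\vC_0\rangle = -\|\cM\vC_0\|^2$. Substituting this into the display above, the numerator and denominator cancel and the right-hand side collapses to the constant $-2$, so $\frac{d}{dt}\|\vC_0\|^2 = -2$ on the interval where the solution is defined. Integrating yields $\|\vC_0(t)\|^2 = 2(T-t)$, where $T$ is the constant of integration determined by the value of $\|\vC_0\|^2$ at any single time (geometrically, the extinction time of the rescaled circle).

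For the last sentence I would observe that $e^{\eps\cM}$ is orthogonal, again because $\cM$ is antisymmetric, so for every $\eps$ we have $\|\vc(t,\eps)\| = \|e^{\eps\cM}\vC_0(t)\| = \|\vC_0(t)\| = \sqrt{2(T-t)}$; hence the whole curve $\eps \mapsto \vc(t,\eps)$ at time $t$ lies on the sphere of that radius.

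I do not expect a genuine obstacle here, as the computation is only two lines; the one point that deserves a word of care is that equation (\ref{eq:cs-for-helices}) with $\vv=0$ is singular where $\cM\vC_0 = 0$, so the identity $\frac{d}{dt}\|\vC_0\|^2 = -2$ is valid only on an interval where $\cM\vC_0(t)\neq 0$ and the solution exists. On such an interval the conclusion is immediate, and one should note that $\|\vC_0\|^2$ reaching $0$ is exactly the boundary of the domain of definition, consistent with the solution shrinking onto a great circle as described after (\ref{eq:vC0-at-t-zero}).
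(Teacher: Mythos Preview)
Your proof is correct and follows essentially the same computation as the paper: differentiate $\|\vC_0\|^2$, substitute the evolution equation with $\vv=0$, and use antisymmetry of $\cM$ to reduce the right-hand side to the constant $-2$. Your added remarks about the orthogonality of $e^{\eps\cM}$ and the caveat concerning points where $\cM\vC_0=0$ are reasonable elaborations, but the core argument is the same two-line computation the paper gives.
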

\begin{proof}
We compute, using (\ref{eq:cs-for-helices}) and  $\cM^t = -\cM$,
  \[
  \frac{d} {dt}\|\vC_0(t)\|^2
  = 2 \frac{\langle\vC_0(t), \cM^2\vC_0(t)\rangle} {\|\cM\vC_0(t)\|^2}
  = -2 \frac{\langle\cM\vC_0(t), \cM\vC_0(t)\rangle} {\|\cM\vC_0(t)\|^2}
  =-2,
  \]
  which immediately implies the Lemma.
\end{proof}

\begin{lemma}
  Let $\vc:(\infty, T)\times\R \to \R^n$ be a family of curves with $\|\vc(t,
  \eps) \|^2 = 2(T-t)$, and consider the family of curves $\tvc$ on the unit
  sphere given by
  \begin{equation}
    \vc(t, \eps) = \sqrt{2(T-t)} \; \tvc\bigl(-\tfrac12\ln(T-t), \eps\bigr).
    \label{eq:CS-on-sphere}
  \end{equation}
  Then $\vc$ evolves by Curve Shortening if and only if $\tvc$ evolves Curve
  Shortening on $\Sph^{n-1}$.
\end{lemma}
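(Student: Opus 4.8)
The plan is to treat the statement as a pure rescaling identity and carry out the change of variables explicitly, matching the two flows term by term; every step will be reversible, which gives the ``if and only if'' for free. First I would introduce the abbreviations $\rho(t)=\sqrt{2(T-t)}$ and the new time $\sigma=-\tfrac12\ln(T-t)$, so that $\rho'=-1/\rho$ and $d\sigma/dt=1/\rho^2=1/\bigl(2(T-t)\bigr)$, and rewrite the ansatz (\ref{eq:CS-on-sphere}) as $\vc(t,\eps)=\rho(t)\,\tvc(\sigma(t),\eps)$ with $\|\tvc\|\equiv1$.

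The first computation relates the two arc-length parameters. Since $\rho$ depends only on $t$, at fixed $t$ we have $\vc_\eps=\rho\,\tvc_\eps$, hence $ds=\rho\,d\tilde s$ where $\tilde s$ is arc length on the sphere; equivalently $\partial_s=\rho^{-1}\partial_{\tilde s}$. This yields the unit tangent $\vc_s=\tvc_{\tilde s}$ and the curvature vector $\vc_{ss}=\rho^{-1}\tvc_{\tilde s\tilde s}$. Next I would compute the velocity at fixed $\eps$: differentiating $\vc=\rho\,\tvc$ in $t$ and inserting $\rho'=-1/\rho$ and $d\sigma/dt=\rho^{-2}$ gives $\vc_t=\rho^{-1}\bigl(\tvc_\sigma-\tvc\bigr)$.

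The heart of the argument is to project $\vc_t$ onto the normal of the curve and match it against $\vc_{ss}$. Differentiating $\|\tvc\|^2\equiv1$ provides the relations $\langle\tvc,\tvc_{\tilde s}\rangle=0$, $\langle\tvc_\sigma,\tvc\rangle=0$, and (using $\|\tvc_{\tilde s}\|=1$) $\langle\tvc_{\tilde s\tilde s},\tvc\rangle=-1$. These give $\pro_{\vc_s}(\vc_t)=\rho^{-1}\bigl[\tvc_\sigma-\langle\tvc_\sigma,\tvc_{\tilde s}\rangle\tvc_{\tilde s}-\tvc\bigr]$. Setting $\pro_{\vc_s}(\vc_t)=\vc_{ss}$ and cancelling the common factor $\rho^{-1}$ reduces the $\R^n$ equation to
\[
\tvc_\sigma-\langle\tvc_\sigma,\tvc_{\tilde s}\rangle\tvc_{\tilde s}=\tvc_{\tilde s\tilde s}+\tvc.
\]

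Finally I would identify this as Curve Shortening on $\Sph^{n-1}$. The left-hand side is the component of $\tvc_\sigma$ normal to the curve, and since $\langle\tvc_\sigma,\tvc\rangle=0$ it is automatically tangent to the sphere, so it is exactly the normal velocity of the curve within $\Sph^{n-1}$. By $\langle\tvc_{\tilde s\tilde s},\tvc\rangle=-1$, the right-hand side $\tvc_{\tilde s\tilde s}+\tvc$ is the projection of $\tvc_{\tilde s\tilde s}$ onto the tangent space of the sphere, i.e.\ the geodesic curvature vector of the curve. Thus the displayed equation is precisely the spherical Curve Shortening equation with flow time $\sigma$, and reversibility of the algebra closes the equivalence. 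I expect the only delicate point to be the bookkeeping of which projection is in play at each stage: one must verify that both sides of the reduced equation are simultaneously tangent to the sphere and normal to the curve, so that the ambient $\R^n$ normal projection $\pro_{\vc_s}$ genuinely descends to the intrinsic normal projection on $\Sph^{n-1}$.
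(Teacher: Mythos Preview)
Your proposal is correct and follows essentially the same route as the paper: compute $\vc_t$ and $\vc_{ss}$ in terms of $\tvc$ via the rescaling $\vc=\rho\,\tvc$, and match them against the spherical equation $\tvc_\sigma=\tvc_{\tilde s\tilde s}+\tvc$. The paper's own proof is just the two-line computation $\vc_t=\rho^{-1}(\tvc_\sigma-\tvc)$, $\vc_{ss}=\rho^{-1}\tvc_{\tilde s\tilde s}$; your version is more careful in tracking the tangential projection $\pro_{\vc_s}$ and verifying that both sides of the reduced equation lie in the intrinsic normal space of the curve on the sphere, but this is added rigor rather than a different argument.
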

\begin{proof}
  Since Curve Shortening for immersed curves in $\Sph^{n-1}$ is given by
  $\tvc_\theta = \tvc_{ss} + \tvc$, the lemma follows from a direct computation:
  \[
  \vc_t = \frac{1} {\sqrt{2(T-t)}}\bigl\{\tvc_{\theta} - \tvc\bigr\},\qquad
  \vc_{ss} = \frac{\tvc_{ss}} {\sqrt{2(T-t)}},
  \]
  where $\theta = -\frac12\ln(T-t)$.
\end{proof}

Putting these two Lemmas together we see that if $\vc(t, \eps) =
e^{\eps\cM}\vC_0(t)$ is a solution to \eqref{eq:CS}, then for suitably chosen
$T$ the family of curves $\tvc(\theta, \eps)$ defined by \eqref{eq:CS-on-sphere}
is an eternal solution of Curve Shortening on $\Sph^{n-1}$.  The asymptotics
derived in \eqref{eq:helix-at-t-neginfty} and \eqref{eq:vC0-at-t-zero} imply
that $\tvc$ converges to a cover of some great circle on $\Sph^{n-1}$, both for
$\theta\to \infty$ and for $\theta\to -\infty$.

\section{Solitons}
\label{sec:solitons}
The one-parameter symmetry groups that led us to the shrinking helices left the
time coordinate $t$ invariant.  From here on we will consider one-parameter
symmetry groups $\{g_\eps\}$ of $\fG$ of categories B and C (see
Table~\ref{tab:list-of-subgroups}), for which $dt/d\eps \neq0$.

In this section we will derive an ODE whose solutions generate all solutions to
(\ref{eq:CS}) that are invariant under a one-parameter subgroup of $\fG$ of
category B or C.  In both cases we start with a family of curves $\vc(\xi, t)$.
Assuming the family is invariant under a category B or C subgroup $\{g_\eps\}$
we find an equation relating $\vc_t$ and $\vc_\xi$.  Combining this with the
further assumption that $\vc$ is a solution of (\ref{eq:CS}), i.e.~of
$\pro_{\vc_s}\vc_t = \vc_{ss}$, we can eliminate the time derivative $\vc_t$
after which we are left with an ODE.  This ODE must be satisfied by the curve
$\vc(\cdot, t)$ at all times $t$.

\subsection{The ODE for Translating and Rotating Solitons}
If $\{g_\eps\}$ is of category B, then we have $\theta=0$, $w=1$, $\cM\vv=0$,
and the subgroup is given by 
\[
g_\eps(\vx, t) = \bigl(e^{\eps\cM}\vx + \eps\vv, t+\eps\bigr).
\]
In words: $g_\eps$ advances time by $\eps$, and translates and rotates space by
$\eps\vv$ and $e^{\eps\cM}$, respectively.

If a family of curves $\vc(\xi, t)$ is known at time $t=t_0$, and if the family
is known to be invariant under the action of the subgroup $g_\eps$, then this
action determines the curves at other times.  Thus for any given $\vc(\xi_0,
t_0)$ the point $g_\eps \bigl(\vc(\xi_0, t_0), t_0\bigr)$ also belongs to the
family of curves, so that a $\xi_1$ exists for which
\[
g_\eps\bigl(\vc(\xi_0, t_0), t_0\bigr) = \bigl( \vc(\xi_1, t_0+\eps),
t_0+\eps \bigr)
\]
holds.  Different choices of $\eps$ will lead to different values of $\xi_1$.
We denote the function $\eps\mapsto \xi_1$ defined in this way by $\xi_1 =
\xi(\eps)$.  Consequently we have for all $\eps$
\begin{equation}
  \vc(\xi(\eps), t_0+\eps)
  = e^{\eps\cM}\vc(\xi_0, t_0) + \eps\vv.
  \label{eq:type-II-soliton-evolution}
\end{equation}
Since $\xi(\eps)$ is obtained by solving this equation, and since $\xi \mapsto
\vc(\xi, t)$ is an immersion, the Implicit Function Theorem implies that
$\xi(\eps)$ depends smoothly on $\eps$.  We can therefore differentiate
(\ref{eq:type-II-soliton-evolution}) with respect to $\eps$ at $\eps=0$, which
leads to
\[
\vc_t + \frac{d\xi} {d\eps}\, \vc_\xi = \cM \vc + \vv,
\]
hence
\[
\pro_{\vc_s}\vc_t = \pro_{\vc_s}\bigl(\cM \vc + \vv\bigr)
\]
and, because of (\ref{eq:CS}),
\[
\vc_{ss} = \pro_{\vc_s}\bigl(\cM \vc + \vv\bigr).
\]
This equation contains no time derivatives and is in fact an ODE.  Therefore we
find that at any time $t_0$ the parametrized curve $\vC(\xi) = \vc(\xi, t_0)$ is
a solution of the ODE
\begin{equation}
  \vC_{ss} = \pro_{\vc_s}\bigl(\cM \vC + \vv\bigr)
  \label{eq:type-II-soliton}
\end{equation}
Conversely, if a parametrized curve $\vC:\R\to\R^n$ satisfies
(\ref{eq:type-II-soliton}), then the family of curves defined by
(\ref{eq:type-II-soliton-evolution}) satisfies (\ref{eq:CS}).

\subsection{The ODE for Dilating and Rotating Solitons}
If $\{g_\eps\}$ is of category C then we have $\theta=1$, $w=0$, $\vv=\vvv0$.
The action of $g_\eps$ is given by
\[
g_\eps(\vx, t) = \bigl(e^\eps e^{\eps\cM}\vx, e^{2\eps}t\bigr).
\]
If $\vc(\xi, t)$ is a solution of (\ref{eq:CS}) that is invariant under
$\{g_\eps\}$, and if it is defined at some time $t_0\in\R$, then, reasoning as
in the category B case above, we find
\[
g_\eps\bigl( \vc(\xi_0, t_0), t_0 \bigr) = \Bigl( \vc(\xi(\eps),
e^{2\eps}t_0) , e^{2\eps}t_0 \Bigr)
\]
for some $\xi(\eps)$ that depends smoothly on $\eps$.  Consequently,
\begin{equation}
  \vc(\xi(\eps), e^{2\eps}t_0)
  =
  e^\eps e^{\eps\cM}\vc(\xi_0, t_0).
  \label{eq:type-III-soliton-evolution}
\end{equation}
Once again we can differentiate with respect to $\eps$ and set
$\eps=0$, this time with result
\[
2t_0 \vc_t + \frac{d\xi}{d\eps}\, \vc_\xi = \vc + \cM\vc.
\]
Taking the normal component, and using (\ref{eq:CS}) we arrive at the ODE
\begin{equation}
  2t_0 \vc_{ss}=
  \pro_{\vc_s} \bigl(\vc + \cM\vc\bigr).
  \label{eq:typeIII-epsderiv}
\end{equation}
We now have to distinguish among the possibilities $t_0=0$ and $t_0\neq0$.

If $t_0=0$, then (\ref{eq:typeIII-epsderiv}) is not a second order differential
equation at all.  However, (\ref{eq:type-III-soliton-evolution}) implies that
for some smooth $\eps\mapsto \xi(\eps)$ one has
\[
\vc(\xi(\eps) , 0) = e^\eps e^{\eps\cM} \vc(\xi_0, 0).
\]
The curve at time $t_0=0$ is a reparametrization of a generalized logarithmic
spiral: it is traced out by $\eps\mapsto e^\eps e^{\eps\cM} \vc_0$ for some
constant vector $\vc_0$.

If $t_0\neq 0$, then we can divide both sides of (\ref{eq:typeIII-epsderiv}) by
$2t_0$ and use (\ref{eq:CS}) to obtain an ODE for the curve at time $t_0$.
Introducing the constants
\begin{equation}
  \alpha = \frac{1}{2t_0}, \mbox{ and }
  \cA = \frac{1}{2t_0}\cM
  \label{eq:alpha-A-def}
\end{equation}
we find that the parametrized curve $\vC(\xi) = \vc(\xi, t_0)$ satisfies
\begin{equation}
  \vC_{ss} = \pro_{\vc_s} \bigl(\alpha\vC + \cA\vC\bigr)
  \label{eq:type-III-soliton}
\end{equation}
whenever $\vc$ is a $\{g_\eps\}$-invariant solution of (\ref{eq:CS}).

Conversely, if $\vC:\R\to\R^n$ is a given solution to
(\ref{eq:type-III-soliton}) for some $\alpha\neq0$ and some
$\cA\in\mathfrak{so}(\R^n)$, then upon defining $t_0 = 1/2\alpha$,
$\cM=2t_0\cA$, one can use (\ref{eq:type-III-soliton-evolution}) to extend the
curve $\vC$ to a $\{g_\eps\}$-invariant family of curves $\vc(\xi, t)$ that
satisfies (\ref{eq:CS}).  We have not specified how to choose $\xi(\eps)$ in
(\ref{eq:type-III-soliton-evolution}), but in view of the invariance of Curve
Shortening under reparametrization of the curves, the choice of $\xi(\eps)$ is
immaterial.  One could simply set $\xi(\eps) = \xi_0$ for all $\eps$, so that
our extension through (\ref{eq:type-III-soliton-evolution}) of $\vC$ would be
given by
\[
\vc(\xi, t) = e^\eps e^{\eps \cM} \vC(\xi), \mbox{ with } \eps =
\frac12 \log \frac{t} {t_0}.
\]
This family of curves is defined for all $t$ that have the same sign as $t_0$.
Thus if a given curve $\vC$ satisfies (\ref{eq:type-III-soliton}) with
$\alpha<0$, then the resulting family of curves is defined for all $t<0$ and
represents a rotating and \textit{shrinking} soliton.  If $\alpha>0$ then $\vc$
is defined for all $t>0$ and represents a rotating and \textit{expanding}
soliton.

\subsection{The general soliton equation}
\label{sec:general-soliton-equation}%
Equations (\ref{eq:type-II-soliton}) and (\ref{eq:type-III-soliton}) are both
special cases of (\ref{eq:soliton-profile}), i.e.
$  \vC_{ss} = (\alpha + \cA) \vC + \vv  + \lambda \vC_s$.
Here $\lambda$ is a smooth function of the parameter and is uniquely determined
by the requirement that $\vC_s \perp \vC_{ss}$.  Comparing the inner product
with $\vC_s$ of both sides of (\ref{eq:soliton-profile}) leads to
\begin{equation}
  \lambda = -\bigl\langle (\alpha + \cA) \vC + \vv, \vC_s \bigr\rangle.
  \label{eq:lambda-revealed}
\end{equation}
We can also write (\ref{eq:soliton-profile}) in the following form
\begin{equation}
  \vC_{ss} = \pro_{\vT}\bigl((\alpha + \cA) \vC + \vv\bigr),
  \label{eq:soliton-profile-w-pro}
\end{equation}
where $\vT = \vC_s$ is the unit tangent to the curve $\vC$.

The soliton equation (\ref{eq:soliton-profile}) defines a first order system of
differential equations for $(\vC, \vT)$,
\begin{equation}
  \vC_s = \vT,
  \qquad
  \vT_s = \pro_\vT \bigl[(\alpha+\cA)\vC + \vv\bigr].
  \label{eq:soliton-profile-as-system}
\end{equation}
Since $\vC\in\R^n$ can be arbitrary and $\vT$ is always a unit vector, this
system defines a flow on $\R^n\times \Sph^{n-1}$.  It is this system that we
used to produce the soliton images in this paper.

\section{Components in the Null space and range of $\cA$}
\label{sec:Components-in-nulspace}
In this section we split $\R^n$ into the null space and range of $\cA$, i.e.
$\R^n= N(\cA) \oplus R(\cA)$, and we consider the corresponding components of a
soliton.  Thus, for a given soliton we write
\begin{equation}
  \vC(s) = \vZ(s) + \vW(s), \qquad
  \vZ(s)\in N(\cA), \; \vW(s) \in R(\cA).
  \label{eq:NA-RA-decomposition}
\end{equation}
It is well known that non-rotating dilating or non-rotating translating solitons
(i.e.~those with $\cA=0$) are planar.  This fact continues to be true in a
weaker sense in the case where the rotation matrix $\cA$ does not necessarily
vanish. Here we show that the projection $\vZ$ of any rotating-dilating or
rotating-translating soliton onto the null space $ N(\cA)$ is planar.  In the
case of purely rotating solitons, the projection $\vZ$ is in fact at most a line
segment, and the soliton is a graph over this line segment.

\begin{figure}[t]
  \centering
  \includegraphics[width=\textwidth]{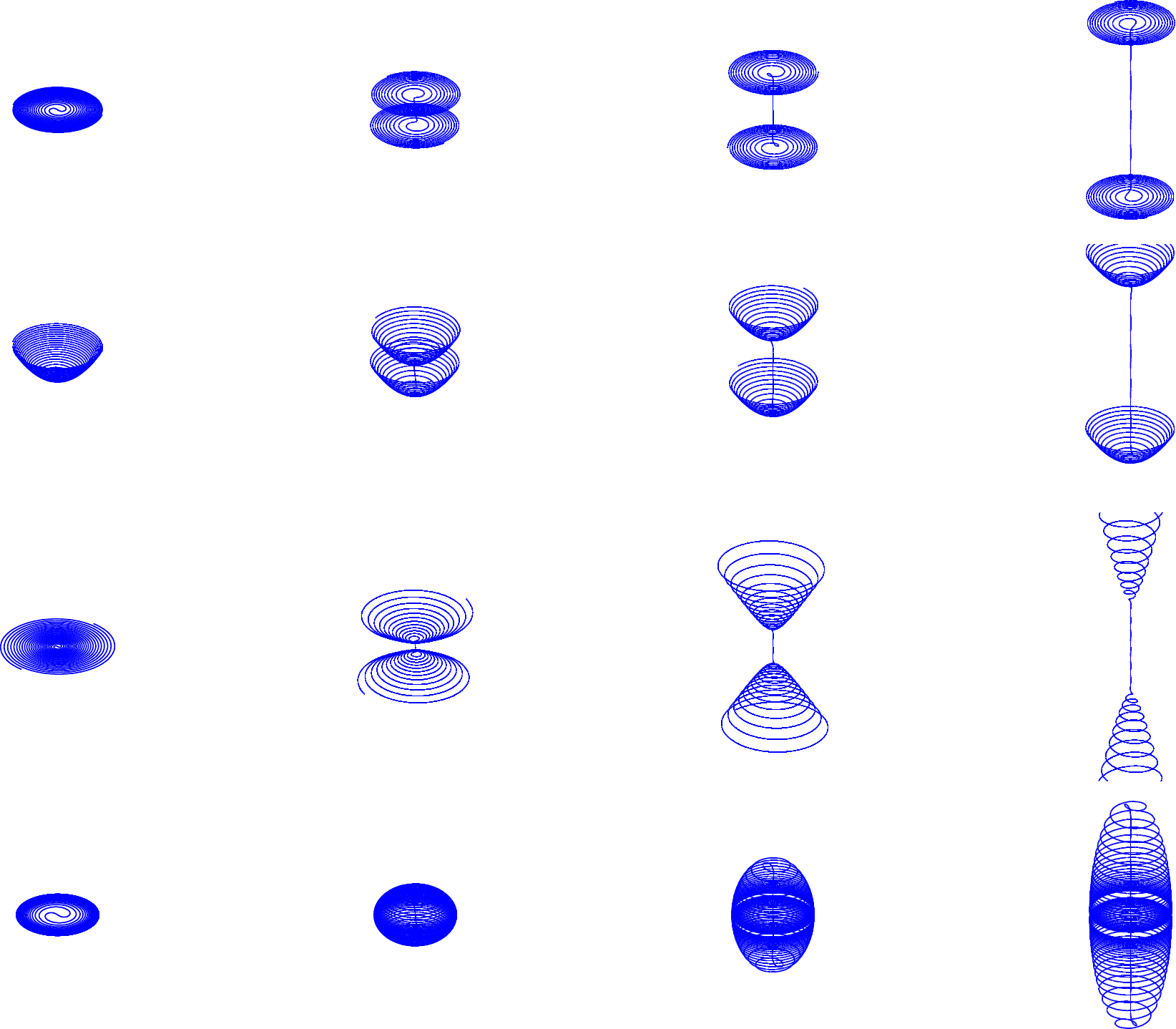}
  \caption{\textbf{Examples of non-planar solitons involving rotation.}  Shown
  are sixteen different solitons that rotate around the $z$-axis (top); rotate
  and translate upward along the $z$-axis (second row); rotate around the
  z-axis and expand (3rd row); rotate and shrink (bottom). }
  \label{fig:solitons-and-nullspace}
\end{figure}

\begin{lemma}
  \label{lem:dilating-rotating-is-2D-in-NA} Let $\vC$ be dilating-rotating
  soliton satisfying (\ref{eq:soliton-profile}) with $\vv=0$.  Then the
  orthogonal projection of $\vC$ onto the null space $N(\cA)$ of $\cA$ is
  contained in a two dimensional subspace of $N(\cA)$.

  In particular, all purely dilating solitons ($\alpha\neq0$, $\cA=0$, $\vv=0$)
  are planar: they are copies of the Abresch-Langer curves.
\end{lemma}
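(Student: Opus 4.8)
The plan is to project the soliton equation onto $N(\cA)$ and to observe that the resulting equation for the $N(\cA)$-component $\vZ$ is a second order linear ODE with \emph{scalar} coefficients, so that the span of $\vZ$ and its arclength derivative is invariant. First I would set $\vv=0$ in (\ref{eq:soliton-profile}) and write the equation in the form $\vC_{ss} = (\alpha+\cA)\vC + \lambda\vC_s$, with $\lambda$ given by (\ref{eq:lambda-revealed}). Because $\cA$ is skew symmetric we have $N(\cA)=R(\cA)^\perp$, so the splitting $\R^n = N(\cA)\oplus R(\cA)$ is orthogonal and $\cA$-invariant, and the orthogonal projection $P$ onto $N(\cA)$ is a constant linear map that commutes with $d/ds$. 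Since $\cA\vC = \cA\vW\in R(\cA)$, projecting the soliton equation annihilates the $\cA\vC$ term and yields
\[
\vZ_{ss} = \alpha\vZ + \lambda\vZ_s,
\]
a linear second order ODE for $\vZ = P\vC$ in which $\alpha$ is constant and $\lambda=\lambda(s)$ is a scalar function of the arclength.

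Next I would exploit that, pointwise, $\vZ_{ss}$ is a linear combination of $\vZ$ and $\vZ_s$. Fix a base point $s_0$ and set $V_0 = \mathrm{span}\{\vZ(s_0),\vZ_s(s_0)\}$, a subspace of $N(\cA)$ of dimension at most two. For an arbitrary fixed vector $\vn\perp V_0$ define $f(s) = \langle\vn,\vZ(s)\rangle$. Differentiating twice and using the projected equation gives the scalar linear ODE $f'' = \alpha f + \lambda f'$, with initial data $f(s_0)=\langle\vn,\vZ(s_0)\rangle=0$ and $f'(s_0)=\langle\vn,\vZ_s(s_0)\rangle=0$. By uniqueness for linear ODEs $f\equiv 0$, and since $\vn$ was an arbitrary vector orthogonal to $V_0$ this forces $\vZ(s)\in V_0$ for all $s$. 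Hence the image of $\vZ$ lies in the subspace $V_0\subseteq N(\cA)$ of dimension at most two, which is the first assertion.

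For the ``in particular'' statement I would specialize to $\cA=0$, so that $N(\cA)=\R^n$ and $\vZ=\vC$; the conclusion above then says the entire curve $\vC$ lies in a two dimensional subspace, i.e.\ is planar. As a purely dilating planar soliton it is an Abresch--Langer curve by definition.

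I do not expect a genuine obstacle here. The only points demanding care are structural rather than computational: that the splitting $\R^n=N(\cA)\oplus R(\cA)$ is \emph{orthogonal}, which is exactly what $\cA^t=-\cA$ provides, and that $P\cA\vC=0$, which is what makes the projected equation close up in $N(\cA)$ with no contribution from the rotation. Once the projected equation $\vZ_{ss}=\alpha\vZ+\lambda\vZ_s$ is in hand, the invariance of $V_0$ is the standard device of testing against a vector normal to the initial osculating plane and invoking uniqueness for the resulting scalar linear ODE.
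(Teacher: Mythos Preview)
Your proof is correct and follows essentially the same approach as the paper: project the soliton equation onto $N(\cA)$ to obtain the linear ODE $\vZ_{ss}=\alpha\vZ+\lambda(s)\vZ_s$ with scalar coefficients, then conclude that $\vZ$ stays in the two-dimensional space spanned by its initial position and velocity. The only cosmetic difference is that the paper writes $\vZ(s)=\Phi_1(s)\vZ(0)+\Phi_2(s)\vZ'(0)$ via the scalar fundamental solutions, whereas you reach the same conclusion by testing against a vector $\vn\perp V_0$ and invoking uniqueness for the resulting scalar equation.
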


\begin{proof}
  $\vC$ satisfies (\ref{eq:soliton-profile}). Therefore, if $\Pi_N$ denotes the
  orthogonal projection onto $N(\cA)$, then $\Pi_N\cA = \cA\Pi_N = 0$ and thus
  $\vZ=\Pi_N\vC$ satisfies
  \[
  \vZ_{ss} = \alpha \vZ + \lambda(s)\vZ_s.
  \]
  Let $\Phi_1(s)$ and $\Phi_2(s)$ be the two solutions of the general solution
  of the second order scalar linear differential equation $\Phi''= \alpha\Phi +
  \lambda(s) \Phi'$ with $\Phi_1(0) = 1$, $\Phi_1'(0) = 0$ and $\Phi_2(0) = 0$,
  $\Phi_2'(0) = 1$.  Then $\vZ$ is given by
  \[
  \vZ(s) = \Phi_1(s)\vZ(0) + \Phi_2(s) \vZ'(0).
  \]
  It follows that $\vZ(s)$ is contained in the two dimensional subspace spanned
  by $\vZ(0)$ and $\vZ'(0)$.
\end{proof}

\begin{lemma}
  \label{lem:translating-rotating-is-2D-in-NA} Let $\vC$ be translating-rotating
  soliton satisfying (\ref{eq:soliton-profile}) with $\alpha=0$.  Then the
  orthogonal projection of $\vC$ onto the null space $N(\cA)$ of $\cA$ is
  contained in a two dimensional affine subspace of $N(\cA)$.

  In particular, all purely translating solitons ($\alpha = 0$, $\cA=0$,
  $\vv\neq0$) are planar: they are copies of the ``Grim Reaper.''
\end{lemma}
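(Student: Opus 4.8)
The plan is to project the soliton equation onto $N(\cA)$ exactly as in the proof of Lemma~\ref{lem:dilating-rotating-is-2D-in-NA}, and then to deal with the inhomogeneous term that now survives. Writing $\Pi_N$ for the orthogonal projection onto $N(\cA)$, I would first use that $\cA$ is antisymmetric, so $R(\cA) = N(\cA)^\perp$ and hence $\Pi_N\cA = \cA\Pi_N = 0$; since $\cA\vC \in R(\cA)$, this kills the rotation term upon projection. The defining constraint of a translating-rotating soliton is $\cA\vv = 0$, i.e.\ $\vv \in N(\cA)$, so $\Pi_N\vv = \vv$. Applying $\Pi_N$ to (\ref{eq:soliton-profile}) with $\alpha = 0$ therefore gives, for $\vZ = \Pi_N\vC$,
\[
\vZ_{ss} = \vv + \lambda(s)\,\vZ_s,
\]
where $\lambda(s)$ is the scalar function already determined by the full curve through (\ref{eq:lambda-revealed}).

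The one genuinely new point, and the difference from the previous lemma, is the constant forcing $\vv$: because the equation for $\vZ$ is inhomogeneous, I cannot simply write $\vZ$ as a scalar combination $\Phi_1\vZ(0) + \Phi_2\vZ'(0)$ of two fixed vectors. The way around it is to differentiate once and work with the velocity $\vW := \vZ_s$, which satisfies the \emph{first order} linear ODE
\[
\vW_s = \vv + \lambda(s)\,\vW .
\]
I would then show that the fixed subspace $U = \mathrm{span}\{\vv, \vW(0)\}$, which has dimension at most two, is invariant under this flow. Decomposing $\vW = \vW_\parallel + \vW_\perp$ into its components in $U$ and $U^\perp$ and projecting the ODE onto the constant subspace $U^\perp$, the term $\vv$ drops out (as $\vv \in U$), so the orthogonal part satisfies $(\vW_\perp)_s = \lambda(s)\,\vW_\perp$ with $\vW_\perp(0) = 0$; uniqueness for this homogeneous linear ODE forces $\vW_\perp \equiv 0$, hence $\vZ_s(s) \in U$ for all $s$.

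Finally I would integrate: since $\vZ(s) - \vZ(0) = \int_0^s \vZ_s(\sigma)\,d\sigma$ lies in $U$, the curve $\vZ$ is contained in the affine subspace $\vZ(0) + U$, which has dimension at most two, proving the main claim. The ``in particular'' statement then follows by taking $\cA = 0$: here $N(\cA) = \R^n$ and $\vZ = \vC$, so the whole curve lies in a two dimensional affine plane, and by the known planar classification of translating solitons it must be a rotation and translation of the Grim Reaper. The only real obstacle is the inhomogeneous term $\vv$, and the key step that resolves it is passing from $\vZ$ to its derivative $\vW$ so as to recover a clean invariant-subspace argument.
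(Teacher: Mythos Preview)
Your argument is correct and follows essentially the same route as the paper: both project the soliton equation onto $N(\cA)$ to obtain $\vZ_{ss} = \vv + \lambda(s)\vZ_s$, and both conclude that $\vZ$ stays in the affine plane $\vZ(0) + \mathrm{span}\{\vZ'(0), \vv\}$. The only cosmetic difference is that the paper extracts this from an explicit scalar solution basis, writing $\vZ(s) = \vZ(0) + \Phi_1(s)\vZ'(0) + \Phi_p(s)\vv$, whereas you pass to the first-order equation for $\vZ_s$ and run an invariant-subspace argument; the paper's explicit formula has the minor advantage that it is reused verbatim (with $\vv=0$) in the proof of the next lemma on purely rotating solitons.
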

\begin{proof}
  We again consider the projection $\vZ=\Pi_N\vC$.  Since we always assume
  $\vv\in N(\cA)$, we have $\Pi_N\vv = \vv$.  Therefore $\vZ$ satisfies
  \[
  \vZ_{ss} = \vv + \lambda(s)\vZ_s
  \]
  where $s$ is still the arclength along the curve $\vC$.  The general solution
  of the (scalar) ODE
  \[
  y'' - \lambda(s) y' = 1
  \]
  can be written as
  \[
  y(s) = y(0) + y'(0)\Phi_1(s) + \Phi_p(s),
  \]
  where $\Phi_1$ is the solution of the homogeneous equation with $\Phi_1(0) =
  0$ and $\Phi_1'(0) = 1$, while $\Phi_p$ is the particular solution with
  $\Phi_p(0) = \Phi_p'(0) = 0$.

  It follows that
  \begin{equation}
    \vZ(s) = \vZ(0) + \Phi_1(s) \vZ'(0) + \Phi_p(s) \vv.
    \label{eq:Z-for-trans-rotating-solitons}
  \end{equation}
  Therefore $\vZ(s)$ is always contained in the plane through $\vZ(0)$ spanned
  by the vectors $\vZ'(0)$ and $\vv$.

\end{proof}
\begin{corollary}
  Let $\vT(s)$ be a smooth family of unit vectors, and let 
  \[
  \vC(s) = \vC(0) + \int_0^s \vT(s')\; ds'
  \]
  be the corresponding curve in $\R^n$.
  If $\vT(s)$ satisfies the differential equation
  \begin{equation}
    \vT_s = \pro_\vT(\vv)
    \label{eq:grim-reaper-tangent}
  \end{equation}
  for some fixed vector $\vv$, then  the curve $\vC$ is a purely
  translating soliton with velocity vector $\vv$.
\end{corollary}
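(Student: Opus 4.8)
The plan is to recognize that the hypothesis $\vT_s = \pro_\vT(\vv)$ is exactly the translating-rotating soliton profile equation (\ref{eq:type-II-soliton}) specialized to $\cM = 0$. Since $\vC_s = \vT$ and $\|\vT(s)\| \equiv 1$, the parameter $s$ is arc length along $\vC$, so that $\vC_{ss} = \vT_s$ and $\pro_{\vC_s} = \pro_\vT$; the equation $\vC_{ss} = \pro_{\vC_s}(\cM\vC + \vv)$ therefore collapses for $\cM = 0$ to precisely the stated ODE. Thus $\vC$ is the initial curve of a category-B soliton with $\cM = 0$, and the corollary reduces to the converse direction already established immediately after (\ref{eq:type-II-soliton}), read off in the case $\cM = 0$, where the evolution (\ref{eq:type-II-soliton-evolution}) becomes pure translation $\vc(\cdot, t_0 + \eps) = \vC(\cdot) + \eps\vv$.

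Rather than appeal to that general converse, the cleanest route is a direct verification. I would define the candidate evolution $\vc(s, t) = \vC(s) + t\vv$, which is manifestly a family of rigid translates of $\vC$ moving with constant velocity $\vv$, i.e.\ a purely translating family. Because translation is an isometry, $s$ remains arc length at every time $t$, so $\vc_s = \vT$ and $\vc_{ss} = \vT_s$ for all $t$, not merely at $t = 0$. Computing $\vc_t = \vv$ and taking the normal component gives $\pro_{\vc_s}(\vc_t) = \pro_\vT(\vv)$, which equals $\vc_{ss} = \vT_s$ exactly by hypothesis. Hence $\pro_{\vc_s}(\vc_t) = \vc_{ss}$, so $\vc$ solves Curve Shortening (\ref{eq:CS}), and by construction it is the purely translating soliton with velocity $\vv$ whose profile curve is $\vC$.

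There is essentially no serious obstacle here, since the statement is a restatement of the $\cM = 0$ case already in hand. The only points deserving (minor) care are confirming that the projection notation is consistent — namely that $\pro_\vT$ denotes projection onto the orthogonal complement of $\vT$, so that $\pro_\vT(\vv)$ is automatically orthogonal to $\vT$ and thus compatible with the identity $\langle \vT, \vT_s\rangle = 0$ forced by $\|\vT\| \equiv 1$ — and noting, as above, that rigid translation preserves the arc-length parametrization, which is what lets the relation $\vc_{ss} = \vT_s$ persist at all times and makes the one-line verification of (\ref{eq:CS}) go through.
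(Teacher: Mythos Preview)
Your proposal is correct. The paper provides no explicit proof for this corollary, treating it as immediate from the converse direction stated right after equation~(\ref{eq:type-II-soliton}); your first paragraph spells out precisely that implicit reasoning (specialize to $\cM=0$), and your direct verification via $\vc(s,t)=\vC(s)+t\vv$ is an equally valid and perhaps cleaner way to see it.
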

\begin{lemma}
  \label{lem:purely-rotatin}
  Let $\vC$ be a purely rotating soliton.  Then its projection $\vZ$
  onto the null space $N(\cA)$ is contained in some line $\ell\subset
  N(\cA)$.

  The projection consists of either one point, a half line obtained by
  splitting $\ell$ at a point $P\in \ell$, or the line segment between
  two points $P, Q\in\ell$.

  If the projection is a half line or a line segment $m\subset\ell$,
  then the soliton is a graph over $m$ in $m\times R(\cA)$.

  The projected soliton approaches any endpoint of $m$ at an
  exponential rate, i.e.~if $\lim_{s\to\pm\infty}\vZ(s) = P$ then
  $\|\vZ(s) - P\|\leq Ce^{-\delta|s|}$ for some $C, \delta>0$.
\end{lemma}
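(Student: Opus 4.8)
The plan is to reduce the entire statement to the behaviour of the single scalar function $\lambda(s)$, which for a purely rotating soliton ($\alpha=0$, $\vv=0$) obeys a remarkably simple differential identity.

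First I would project the soliton equation onto $N(\cA)$. Writing $\vZ=\Pi_N\vC$ as in Lemma~\ref{lem:dilating-rotating-is-2D-in-NA} and using $\Pi_N\cA=0$ together with $\vv=0$, the projected equation collapses to the first order linear relation $\vZ_{ss}=\lambda(s)\,\vZ_s$. Hence $\vZ_s(s)=\mu(s)\,\vZ_s(0)$ with $\mu(s)=\exp\!\int_0^s\lambda(s')\,ds'>0$, so $\vZ_s$ always points along the fixed vector $\vZ_s(0)$. Integrating gives $\vZ(s)=\vZ(0)+\Psi(s)\,\vZ_s(0)$ with $\Psi(s)=\int_0^s\mu$, which already shows that $\vZ$ lies on a single line $\ell\subset N(\cA)$. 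If $\vZ_s(0)=0$ the projection is the single point $\vZ(0)$; otherwise, setting $\vth=\vZ_s(0)/\|\vZ_s(0)\|$ and $u(s)=\langle\vZ(s),\vth\rangle$, the coordinate is strictly increasing (since $u_s=\|\vZ_s(0)\|\,\mu>0$) while the component of $\vZ$ orthogonal to $\vth$ stays constant. Strict monotonicity of $u$ makes $s\mapsto u$ a bijection onto the image interval $m$, so $\vW$ and hence $\vC$ are single valued functions of $u\in m$: this is exactly the assertion that the soliton is a graph over $m$ in $m\times R(\cA)$.

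The heart of the argument is the identity $\lambda_s=-\kappa^2$, where $\kappa=\|\vC_{ss}\|$ is the curvature. I would obtain it by differentiating $\lambda=-\langle\cA\vC,\vT\rangle$ (formula~\eqref{eq:lambda-revealed} with $\alpha=0$, $\vv=0$), using $\vT_s=\cA\vC+\lambda\vT$ from~\eqref{eq:soliton-profile-w-pro} and the antisymmetry of $\cA$; a short computation gives $\lambda_s=\lambda^2-\|\cA\vC\|^2$ while $\kappa^2=\|\cA\vC\|^2-\lambda^2$, whence $\lambda_s=-\kappa^2\le 0$. Thus $\lambda$ is non-increasing, with limits $\lambda_-=\lambda(-\infty)\ge\lambda(+\infty)=\lambda_+$. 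Since $\mu=\exp\int_0^s\lambda$, the signs of these limits control whether the two ends of $m$ are finite: if $\lambda_+<0$ then $\lambda(s)\le\lambda_+/2<0$ for large $s$, so $\mu$ (and therefore $u_s$) decays exponentially and the right end of $m$ is a finite endpoint, whereas if $\lambda_+\ge0$ then $\mu\ge1$ on $[0,\infty)$ and that end is unbounded; the symmetric dichotomy at $-\infty$ is governed by the sign of $\lambda_-$. Because $\lambda_-\ge\lambda_+$, the two ends can be simultaneously unbounded only if $\lambda_-\le 0\le\lambda_+$, i.e.\ $\lambda\equiv0$; but then $\kappa\equiv0$, so $\vC$ is a straight line, which (solving $\pro_\vT(\cA\vC)=0$ and using that the antisymmetric $\cA$ has no nonzero real eigenvalue) must lie entirely in $N(\cA)$. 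Excluding this degenerate straight line, at least one end of $m$ is finite, leaving precisely the three alternatives point, half line, or segment.

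Finally, the exponential rate at a finite endpoint is immediate from the same estimate: at a finite right endpoint $P=\lim_{s\to\infty}\vZ(s)$ we have shown $\lambda_+<0$, hence $\mu(s)\le C e^{-\delta s}$ with $\delta=-\lambda_+/2$, and therefore $\|\vZ(s)-P\|=\|\vZ_s(0)\|\int_s^\infty\mu\le C'e^{-\delta s}$; a finite left endpoint is handled identically using $\lambda_->0$. The one genuinely nontrivial step is the curvature identity $\lambda_s=-\kappa^2$; everything else is an elementary discussion of the scalar equation $\mu_s=\lambda\mu$. The only point I expect to require real care is confirming that the excluded ``full line'' case coincides exactly with a straight line contained in $N(\cA)$, so that the three listed possibilities are genuinely exhaustive for every non-straight purely rotating soliton.
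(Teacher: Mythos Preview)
Your argument is correct and follows essentially the same route as the paper: project onto $N(\cA)$ to get $\vZ_{ss}=\lambda\vZ_s$ and hence the line structure and graph property, derive the key identity $\lambda_s=-\|\vC_{ss}\|^2$, and read off the trichotomy and exponential rates from the sign of $\lambda$ at $\pm\infty$. The paper organizes the sign discussion as ``$\lambda$ changes sign / does not change sign'' rather than via the limits $\lambda_\pm$, and it additionally proves (via a perturbation argument for $\vW_{ss}-\lambda\vW_s=\cA\vW$) that the $R(\cA)$-component $\vW$ decays exponentially along an unbounded end, but that extra conclusion is not part of the lemma as stated and you do not need it.
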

This is illustrated in Figure~\ref{fig:solitons-and-nullspace} (top row).
Non-planar rotating solitons in $\R^3$ are graphs over the rotation axis, and if
they are asymptotic to some plane orthogonal to the rotation axis, then they
approach this plane at an exponential rate.
\begin{proof}
  By the previous Lemma the soliton is given by
  (\ref{eq:Z-for-trans-rotating-solitons}) where in our case $\vv=0$.  Thus the
  projection onto $N(\cA)$ of a purely rotating soliton is given by
  \[
  \vZ(s) = \vZ(0) + \Phi_1(s) \vZ'(0),
  \]
  where $\Phi_1$ satisfies $\Phi_1''=\lambda(s)\Phi_1'$ with $\Phi_1(0) = 0$,
  $\Phi_1'(0) = 1$.  The projected soliton is therefore contained in the line
  $\ell$ through $\vZ(0)$ with direction $\vZ'(0)$.  If $\vZ'(0)=0$ then the
  projection reduces to just the one point $\vZ(0)$.

  Assume from here on that $\vZ'(0)\neq0$.  The line $\ell$ is parametrized by
  \[
  \vZ= \vZ(0) + \phi\vZ'(0), \qquad \phi\in\R.
  \]
  The equation for $\Phi_1$ implies that $\Phi_1'(s)>0$ for all $s$.  One way to
  see this is to observe that $\Phi_1'$ satisfies a linear homogeneous first
  order differential equation so that either $\Phi_1'(s) = 0$ for all $s$, or
  $\Phi_1'(s)\neq0$ for all $s$.  Alternatively, one can just solve the ODE for
  $\Phi_1$, resulting in
  \begin{equation}
    \Phi_1(s) = \int_0^s e^{\int_0^{s'}\lambda(s'')ds''}\; ds'.
    \label{eq:Phi1}
  \end{equation}
  Since $\Phi_1$ is a monotone function the map $s\mapsto \vZ(s)$ is one-to-one.
  The range of $\Phi_1$ is an interval $(\phi_-, \phi_+)$ where
  \[
  \phi_\pm = \lim_{s\to\pm\infty} \Phi_1(s)
  \]
  and the projection of the soliton is the line segment $m\subset\ell$
  consisting of all points $\vZ(0)+\phi\vZ'(0)$ with $\phi_-<\phi<\phi_+$.  For
  each $\phi\in(\phi_-, \phi_+)$ one can find $s=\Phi_1^{-1}(\phi)$ and write
  \[
  \vC(s) = \vZ(0) + \phi\vZ'(0) + \vW,
  \]
  where $\vW$ is the projection of $\vC(s)$ onto $N(\cA)^\perp = R(\cA)$.
  Regarding $\vW$ as a function of $\phi\in(\phi_-, \phi_+)$ we see that the
  soliton is indeed a graph over the line segment $m$.

  We now estimate the rate at which $\Phi_1(s)\to\phi_\pm$ as $s\to\pm\infty$
  from (\ref{eq:Phi1}), which requires us to look at $\lambda(s)$ given by
  (\ref{eq:lambda-revealed}).  Under the assumptions of this lemma
  (\ref{eq:lambda-revealed}) reduces to $ \lambda(s) = -\langle\vC_s,
  \cA\vC\rangle$.  Differentiate, use (\ref{eq:soliton-profile}), and keep in
  mind that $\vC_s\perp\vC_{ss}$\ :
  \[
  \lambda'(s) = -\langle\vC_{ss}, \cA\vC\rangle =-\langle\vC_{ss}, \vC_{ss} -
  \lambda\vC_s\rangle =-\|\vC_{ss}\|^2.
  \]
  Thus $\lambda$ is strictly decreasing, except at points of zero curvature on
  the soliton.

  We distinguish between the following possibilities for the function
  $\lambda(s)$.

  \textbf{$\lambda(s) = 0$ for all $s$. }  In this case $\lambda'(s)=0$ and
  consequently $\vC_{ss}=0$ for all $s$.  The soliton must be a straight line.

  \textbf{$\lambda(s)$ changes sign, i.e.~$\lambda(s_{*}) > 0 > \lambda(s_{**})$
  for certain $s_{*} < s_{**}$. }  Let $\lambda(s_{**}) = - \delta$.  Then
  $\lambda(s)\leq -\delta < 0$ for all $s\geq s_{**}$.  Hence
  $\Phi_1'(s)/\Phi_1'(s_{**}) = e^{\int_{s_{**}}^s\lambda(s')ds'} \leq
  e^{-\delta (s-s_{**})}$.  Integrating we find that $\phi_+$ is finite, and
  that $|\Phi_1(s) - \phi_+|\leq C e^{-\delta s}$ for all $s\geq s_{**}$.  An
  analogous argument applies to the behaviour of $\Phi_1(s)$ for $s\to-\infty$.

  \textbf{$\lambda(s)$ does not change sign, and $\lambda(s_*)\neq0$ for some
  $s_*\in\R$. }  Without loss of generality we assume that $\lambda(s)\geq 0$
  for all $s$.  As in the previous case where $\lambda(s)$ changed sign we find
  that $\Phi_1(s)$ converges exponentially as $s\to-\infty$.  We must still
  examine the behaviour of the soliton as $s\to+\infty$.

  The components $\vZ$ and $\vW$ of $\vC$ satisfy linear differential equations
  \begin{eqnarray}
    \vW_{ss} - \lambda(s) \vW_s &=& \cA\vW,
    \label{eq:W-linear-ode} \\
    \vZ_{ss} - \lambda(s) \vZ_s &=& 0.
  \end{eqnarray}
  Multiply the second equation with the integrating factor and integrate.  We
  get
  \begin{equation}
    \vZ_s(s) = e^{\int_{s_0}^s \lambda(s')ds'} \vZ_s(s_0).
    \label{eq:Zs-integrated}
  \end{equation}
  We can choose $s_0$ so that $\vZ_s(s_0)\neq0$, for if we could not, then
  $\vZ_s(s)=0$ for all $s$, implying that $\vZ(s)$ is constant, in which case
  the soliton would lie in an affine subspace parallel to $R(\cA)$.

  Since $\vZ_s$ is a projection of a unit vector, (\ref{eq:Zs-integrated})
  implies that
  \[
  \int_{s_0}^s \lambda(s')ds' \leq -\ln \|\vZ_s(s_0)\|
  \]
  for all $s>s_0$.  The integrand $\lambda$ is nonnegative, so we can take the
  limit $s\nearrow\infty$ to get
  \begin{equation}
    \int_{s_0}^\infty \lambda(s)ds \leq -\ln \|\vZ_s(s_0)\|.
    \label{eq:lambda-integrable}
  \end{equation}
  Recalling that $\lambda(s)$ is non-increasing we conclude that
  $\lim_{s\to\infty} \lambda(s) = 0$.

  It follows from  (\ref{eq:Zs-integrated}) and (\ref{eq:lambda-integrable}) that
  \[
  \vZ_s(+\infty) = \lim_{s\to\infty} \vZ_s(s)
  \]
  exists, and is non-zero.

  Now that we know that $\lambda(s) = o(1)$ as $s\to\infty$, we see that the
  equation satisfied by $\vW$ is a small perturbation of the constant
  coefficient equation
  \begin{equation}
    \vW_{ss} - \cA\vW=0.
    \label{eq:W-linearized-ode}
  \end{equation}
  The eigenvalues of $\cA$, as a linear transformation on $R(\cA)$ are $\{\pm
  i\omega_1, \ldots , \pm i\omega_m\}$.  It follows that the characteristic
  exponents of the equation (\ref{eq:W-linearized-ode}) are
  \[
  \pm \sqrt{\pm i\omega_k} = \bigl(\pm1\pm i\bigr) {\sqrt\frac{\omega_k}{2}},
  \qquad (k=1, \ldots, m).
  \]
  We note that none of these exponents lie on the imaginary axis so that the
  associated system is hyperbolic.  A perturbation theorem
  \cite{CoddingtonLevinson} therefore implies that the general solution of
  (\ref{eq:W-linear-ode}) can be written as
  \[
  \vW(s) = \vW_+(s) + \vW_-(s)
  \]
  where $\vW_+(s)$ consists of exponentially growing terms, and $\vW_-(s)$ is
  exponentially decaying.

  We claim that the exponentially growing component $\vW_+$ vanishes. Indeed,
  \[
  \lambda'(s) = -\|\vC_{ss}\|^2 = -\|\vW_{ss}\|^2 -\|\vZ_{ss}\|^2
  \]
  implies that $\int^\infty \|\vW_{ss}\|^2ds <\infty$.  Integrating twice, it
  follows that $\|\vW(s)\| \leq C s^{3/2}$.  Since $\vW$ and $\vW_+$ differ by
  the exponentially decaying term $\vW_-$ we see that $\|\vW_{+}\|$ also grows
  at most like $\cO(s^{3/2})$.  As $\vW_+$ grows exponentially this can only
  happen if $\vW_+$ vanishes.
\end{proof}

\section{The Distance to the Origin and to the Null space of $\cA$}
\label{sec:distance-to-Origin}

\subsection{A differential equation for distance functions}
\label{sec:diff-equat-dist}

If $\vC:\R\to\R^n$ is a soliton, i.e.~a solution of~(\ref{eq:soliton-profile}),
then we again split $\vC = \vW+\vZ$, as in~(\ref{eq:NA-RA-decomposition}).  The
lengths of these components satisfy various monotonicity properties, depending
on the parameters $\alpha,\cA, \vv$.

\begin{lemma}
  \label{lem:length-of-W-ode}
  If $\cB:\R^n\to\R^n$ is linear with $\cA\cB = \cB\cA$, $\cB\vv = 0$, then
  the quantity
  \[
  \delta(s) = \tfrac12\|\cB\vC\|^2
  \]
  satisfies
  \begin{equation}
    \delta_{ss} - \lambda(s)\delta_s - 2\alpha\delta = 
    \|\cB\vC_s\|^2.
    \label{eq:length-of-BC-ode}
  \end{equation}
  In particular, $ \delta(s) = \tfrac12 \|\vW(s)\|^2 $ satisfies the
  differential equation
  \begin{equation}
    \delta_{ss} - \lambda(s)\delta_s - 2\alpha\delta = 
    \|\vW_s\|^2.
    \label{eq:length-of-W-ode}
  \end{equation}
  If $\vC$ is non-translating soliton, so that $\vv=0$, then $\delta(s) =
  \tfrac12\|\vC(s)\|^2$ satisfies
  \begin{equation}
    \delta_{ss} - \lambda(s)\delta_s - 2\alpha\delta = 1.
    \label{eq:length-of-C-ode}
  \end{equation}
\end{lemma}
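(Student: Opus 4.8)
The plan is to compute $\delta_{ss}$ directly from the definition $\delta = \tfrac12\|\cB\vC\|^2$ and substitute the soliton equation~(\ref{eq:soliton-profile}) together with the hypotheses $\cA\cB=\cB\cA$ and $\cB\vv=0$. First I would differentiate once to get $\delta_s = \langle \cB\vC, \cB\vC_s\rangle$ (using that $\cB$ is a constant linear map and assuming, or noting that we may assume by replacing $\cB$ with $\tfrac12(\cB+\cB^t)$ in the quadratic form, symmetry so that $\langle \cB\vC,\cB\vC_s\rangle$ has the expected form --- more cleanly, I would write $\delta_s = \langle \cB^t\cB\vC, \vC_s\rangle$ and carry the operator $\cB^t\cB$ throughout). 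Differentiating a second time yields
\[
  \delta_{ss} = \|\cB\vC_s\|^2 + \langle \cB^t\cB\vC, \vC_{ss}\rangle.
\]
The term $\|\cB\vC_s\|^2$ is precisely the desired right-hand side, so everything reduces to identifying the inner-product term.

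The key step is then to substitute $\vC_{ss} = (\alpha+\cA)\vC + \vv + \lambda\vC_s$ into $\langle \cB^t\cB\vC, \vC_{ss}\rangle$ and show it equals $2\alpha\delta + \lambda\delta_s$. Writing it out,
\[
  \langle \cB^t\cB\vC, \vC_{ss}\rangle
  = \alpha\langle \cB^t\cB\vC,\vC\rangle
  + \langle \cB^t\cB\vC, \cA\vC\rangle
  + \langle \cB^t\cB\vC, \vv\rangle
  + \lambda\langle \cB^t\cB\vC,\vC_s\rangle.
\]
The first term is $2\alpha\delta$ since $\langle\cB^t\cB\vC,\vC\rangle = \|\cB\vC\|^2 = 2\delta$; the last term is $\lambda\delta_s$ by the formula for $\delta_s$ above. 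The main obstacle --- really the only place the hypotheses on $\cB$ are used --- is showing the two middle terms vanish. For the $\vv$ term, $\cB\vv=0$ gives $\cB^t\cB\vv=0$, but I actually need $\langle\cB\vC,\cB\vv\rangle = \langle\vC,\cB^t\cB\vv\rangle = 0$, which follows directly from $\cB\vv=0$. For the $\cA$ term, I would use the commutation $\cA\cB=\cB\cA$ together with antisymmetry of $\cA$: since $\cB$ commutes with $\cA$, one checks that $\cB^t$ also commutes with $\cA$ (transpose the relation $\cA\cB=\cB\cA$ and use $\cA^t=-\cA$ to get $\cB^t\cA = \cA\cB^t$), hence $\cB^t\cB$ commutes with $\cA$, so $\langle\cB^t\cB\vC,\cA\vC\rangle = \langle\vC,\cB^t\cB\,\cA\vC\rangle = \langle\vC,\cA\,\cB^t\cB\vC\rangle = -\langle\cA\vC,\cB^t\cB\vC\rangle$, forcing this quantity to be zero.

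Assembling these pieces gives $\delta_{ss} = \|\cB\vC_s\|^2 + 2\alpha\delta + \lambda\delta_s$, which is exactly~(\ref{eq:length-of-BC-ode}). The two specializations are then immediate corollaries of the general identity. For~(\ref{eq:length-of-W-ode}) I take $\cB = \Pi_R$, the orthogonal projection onto $R(\cA)$: it is symmetric, it commutes with $\cA$ (since $R(\cA)$ and $N(\cA)$ are $\cA$-invariant), and $\cB\vv=0$ because $\vv\in N(\cA)$; then $\cB\vC = \vW$ and $\|\cB\vC_s\|^2 = \|\vW_s\|^2$. For~(\ref{eq:length-of-C-ode}) I take $\cB = \mathrm{Id}$ in the non-translating case $\vv=0$, so $\delta = \tfrac12\|\vC\|^2$ and the right-hand side becomes $\|\vC_s\|^2 = 1$ since $\vC_s$ is the unit tangent. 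I expect no serious difficulty beyond the bookkeeping with $\cB^t\cB$; the one subtlety worth stating cleanly is the passage from $\cA\cB=\cB\cA$ to $\cA\,\cB^t\cB = \cB^t\cB\,\cA$ via antisymmetry of $\cA$, which is what makes the rotational term drop out.
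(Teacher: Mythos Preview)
Your proof is correct and follows essentially the same direct computation as the paper. The only minor difference is that the paper keeps the inner product in the form $\langle\cB\vC,\cB\cA\vC\rangle$ and uses $\cB\cA=\cA\cB$ to rewrite it as $\langle\cB\vC,\cA(\cB\vC)\rangle=0$ by antisymmetry, which sidesteps the need to verify that $\cB^t$ commutes with $\cA$; your route through $\cB^t\cB$ is equally valid but slightly longer.
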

\begin{proof}
  The general case implies the special case by choosing $\cB$ to be the orthogonal
  projection onto $R(\cA)$ for (\ref{eq:length-of-W-ode}) or by letting $\cB$ be
  the identity for (\ref{eq:length-of-C-ode}).  To prove the general case we
  simply compute the derivatives of $\delta=\frac12\|\cB\vC\|^2$,
  \begin{eqnarray*}
    \delta_s &=& \langle\cB\vC, \cB\vC_s\rangle, \\
    \delta_{ss} &=& \langle\cB\vC_s, \cB\vC_s\rangle
    + \langle\cB\vC, \cB\vC_{ss}\rangle\\
    &=&\|\cB\vC_s\|^2 + \bigl\langle \cB\vC, \cB\bigl\{ (\alpha+\cA)\vC
    + \vv +\lambda\vC_s\bigr\}
    \bigr\rangle\\
    &=&\|\cB\vC_s\|^2 + \alpha\|\cB\vC\|^2
    +\lambda \langle\cB\vC, \cB\vC_s\rangle\\
    &=&\|\cB\vC_s\|^2 + 2\alpha\delta + \lambda\delta_s.
  \end{eqnarray*}
\end{proof}

\begin{lemma} \label{lem:length-of-W-one-minimum}
  Let $\vC$ be a rotating soliton that is either translating ($\alpha=0$,
  $\cA\vv=0$, $\vv\neq0$) or not shrinking ($\alpha \geq 0$, $\vv=0$).  Let
  $\cB$ again be linear with $\cA\cB=\cB\cA$ and $\cB\vv=0$.  Assume moreover
  that $R(\cB)\subset R(\cA)$.

  If $\cB\vC(s)\neq0$ for some $s\in\R$, then $\|\cB\vC(s)\|$ is either strictly
  monotone on the entire soliton, or else it has exactly one minimum.

  The same statement applies to the length of $\vW$.
\end{lemma}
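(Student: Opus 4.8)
The plan is to push everything through the second-order ODE for $\delta(s) = \tfrac12\|\cB\vC(s)\|^2$ supplied by Lemma~\ref{lem:length-of-W-ode}, and then to read off the dichotomy from a single integrating-factor observation. First I would record that under either hypothesis one has $\alpha\geq 0$: in the translating case $\alpha=0$ by assumption, and in the non-shrinking case $\alpha\geq 0$ is assumed outright. Since $\cB$ satisfies $\cA\cB=\cB\cA$ and $\cB\vv=0$, Lemma~\ref{lem:length-of-W-ode} applies verbatim and gives
\[
\delta_{ss} - \lambda(s)\,\delta_s - 2\alpha\,\delta = \|\cB\vC_s\|^2 ,
\]
where the right-hand side is manifestly nonnegative, and where $\delta\geq 0$ and $\alpha\geq 0$.

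The key step is to rewrite this with the everywhere-positive integrating factor $\mu(s)=e^{-\int_0^s\lambda(s')\,ds'}$. Because $\mu'=-\lambda\mu$, one computes
\[
\bigl(\mu\,\delta_s\bigr)' = \mu\,\bigl(\delta_{ss}-\lambda\,\delta_s\bigr) = \mu\,\bigl(2\alpha\,\delta + \|\cB\vC_s\|^2\bigr)\geq 0 .
\]
Thus $P(s):=\mu(s)\,\delta_s(s)$ is nondecreasing on all of $\R$. Since $\mu>0$, the sign of $\delta_s$ coincides with the sign of $P$, and a nondecreasing function changes sign at most once and only from negative to positive. Hence $\delta_s$ is either of one sign throughout (so $\delta$, and therefore $\|\cB\vC\|$, is monotone) or else it is negative on a left half-line and positive on a right half-line (so $\delta$ has a single interior minimum). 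This is precisely the claimed dichotomy.

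To upgrade \emph{monotone} to \emph{strictly monotone} and \emph{a minimum} to \emph{exactly one minimum}, I would rule out that $\delta_s$ vanishes on an interval. If $\delta_s\equiv 0$ on some open interval then $P\equiv 0$ there, and the displayed formula for $(\mu\delta_s)'$ forces $\cB\vC_s\equiv 0$ (and $\alpha\delta\equiv 0$) on that interval. Because $\vC$ solves the analytic system (\ref{eq:soliton-profile-as-system}), the function $\|\cB\vC_s\|^2$ is real-analytic, so vanishing on an interval forces $\cB\vC_s\equiv 0$ on all of $\R$; then $\cB\vC$ is a constant vector, which under the standing hypothesis $\cB\vC(s)\neq 0$ for some $s$ is the degenerate case of a nonzero constant. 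Outside this degenerate case $P$ crosses zero at a single point, giving either strict monotonicity or a unique minimum.

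Finally, the statement for $\vW$ follows by the same device used in Lemma~\ref{lem:purely-rotatin}: take $\cB$ to be the orthogonal projection onto $R(\cA)$. Since $\cA$ is normal it preserves the splitting $\R^n=N(\cA)\oplus R(\cA)$, so this $\cB$ commutes with $\cA$; it annihilates $\vv$ because $\vv\in N(\cA)$; and $R(\cB)=R(\cA)$. All hypotheses therefore hold, and $\cB\vC=\vW$, so the result transfers immediately. I expect the only genuine obstacle to be the bookkeeping around the degenerate flat case $\cB\vC_s\equiv 0$; the monotonicity of $\mu\,\delta_s$ is what does all the work and makes the dichotomy essentially automatic.
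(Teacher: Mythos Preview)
Your integrating-factor approach is genuinely different from the paper's and is cleaner. The paper argues locally at each critical point of $\delta$, showing via the ODE (and, in the $\alpha=0$ case, via higher derivatives up to $\delta_{ssss}$) that every critical point must be a strict local minimum, whence there can be at most one. Your observation that $P=\mu\,\delta_s$ is globally nondecreasing yields the dichotomy in one stroke and avoids the fourth-derivative computation entirely.

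That said, the ``bookkeeping around the degenerate flat case'' that you flag is not optional bookkeeping; without it the proof is incomplete, and resolving it is precisely where the hypothesis $R(\cB)\subset R(\cA)$ enters --- a hypothesis you never use. Here is the missing step. Suppose $\cB\vC_s\equiv 0$, so $\cB\vC\equiv\vw_0$ is constant. Applying $\cB$ to (\ref{eq:soliton-profile}) and using $\cA\cB=\cB\cA$, $\cB\vv=0$ gives the linear ODE $\cB\vC_{ss}=(\alpha+\cA)\cB\vC+\lambda\,\cB\vC_s$, hence $(\alpha+\cA)\vw_0=0$. If $\alpha>0$ then $\alpha+\cA$ is invertible ($\cA$ antisymmetric), so $\vw_0=0$. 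If $\alpha=0$ then $\vw_0\in N(\cA)$; but also $\vw_0\in R(\cB)\subset R(\cA)$, and $N(\cA)\cap R(\cA)=\{0\}$, so again $\vw_0=0$. Either way $\cB\vC\equiv 0$, contradicting the standing hypothesis. This is exactly the mechanism the paper invokes (via its equation for $\cB\vC$) to dispose of the borderline cases in its critical-point analysis; once you insert it, your argument is complete and arguably more transparent than the original. Incidentally, the same linear ODE lets you avoid the appeal to analyticity: from $\cB\vC_s=0$ and $(\alpha+\cA)\cB\vC=0$ on an interval you already get $\cB\vC=\cB\vC_s=0$ at one point, and uniqueness for the linear ODE propagates this globally.
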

In the next section we will use this lemma with $\cB=\cA$.  In the case that has
the most natural interpretation $\cB$ is the orthogonal projection on $R(\cA)$,
so $\cB\vC=\vW$.

\begin{proof}
  Since $\cB\vC(s)\neq0$ for some $s\in\R$, we have $\delta(s)\neq0$ for at least
  some $s\in\R$, where $\delta(s) = \frac12\|\cB\vC(s)\|^2$.

  If $\delta_s\neq0$ for all $s$ then $\delta_s$ must have the same sign for all
  $s$, and therefore $\delta$ is monotone.

  The remaining case to consider is that $\delta_s(s_0)=0$ for some $s_0$.  We
  will show that this implies $\delta(s)$ has a strict local minimum at $s_0$, and
  in particular, that $\delta$ has no local maxima.  This implies that a strict
  local minimum must be unique, since any two strict local minima would have to
  bracket a local maximum.  Having established that $\delta$ has exactly one
  critical point $s_0$ it follows that $\delta(s)$ is decreasing for $s<s_0$ and
  increasing for $s>s_0$.

  Let $s_0$ be a critical point of $\delta$.  The differential equation for
  $\delta$ implies that
  \[
  \delta_{ss}(s_0) = 2\alpha\delta(s_0) + \|\cB\vC_s(s_0)\|^2\geq 0.
  \]
  If either $\alpha\delta(s_0)>0$ or $\cB\vC_s(s_0)\neq 0$ then we have
  $\delta_{ss}(s_0) >0$ so that $\delta$ has a local minimum at $s_0$.  We are
  therefore left with the possibility that both $\alpha\delta(s_0) = 0$ and
  $\cB\vC_s(s_0)=0$.

  If $\alpha>0$ then $\alpha\delta(s_0)=0$ implies $\delta(s_0)=0$ and therefore
  $\cB\vC(s_0)=0$.  Applying $\cB$ to both sides of the differential equation
  (\ref{eq:soliton-profile}) and using the assumption that $\cB$ commutes with
  $\cA$, we find that $\cB\vC$ satisfies a linear homogeneous differential
  equation,
  \begin{equation}
    \cB\vC_{ss} = (\alpha+\cA) \cB\vC + \lambda(s) \cB\vC_s.
    \label{eq:W-ode}
  \end{equation}
  It then follows from $\cB\vC(s_0) = \cB\vC_{s}(s_0)=0$ that $\cB\vC(s)=0$ for
  all $s$.  We had assumed this was not the case.

  Finally we have one more case left, namely, $\alpha=0$.  Compute the next two
  derivatives of $\delta$ using (\ref{eq:length-of-W-ode}):
  \begin{eqnarray*}
    \delta_{sss} &=& \lambda\delta_{ss} + \lambda_s\delta_{s}
    + 2\langle\cB\vC_s, \cB\vC_{ss}\rangle \\
    \delta_{ssss} &=&\lambda\delta_{sss} + 2\lambda_{s}\delta_{ss}
    +\lambda_{ss}\delta_s +2\|\cB\vC_{ss}\|^2 + 2\langle\cB\vC_s,
    \cB\vC_{sss}\rangle.
  \end{eqnarray*}
  At $s_0$ the conditions $\cB\vC_s=0$ and $\delta_s=\delta_{ss}=0$ imply first
  $\delta_{sss}=0$, and then $\delta_{ssss} = 2\|\cB\vC_{ss}\|^2$.

  If $\cB\vC_{ss}(s_0)=0$, then (\ref{eq:W-ode}) with $\alpha=0$ implies that
  $\cA\cB\vC(s_0) = \cB\vC_{ss}-\lambda\cB\vC_s = 0 $.  Since $\cA$ is invertible
  on $R(\cA)$ and since $R(\cB)\subset R(\cA)$ this forces $\cB\vC(s_0)=0$.  But
  then $\cB\vC(s_0) = \cB\vC_s(s_0) = 0$ so that (\ref{eq:W-ode}) implies
  $\cB\vC(s)=0$ for all $s$, once more contradicting our assumption.

  Since we have shown that $\cB\vC_{ss}(s_0)\neq 0$ we know that
  $\delta_{ssss}(s_0)>0$ and therefore $\delta$ has a strict local minimum at
  $s_0$ is this last case too.

\end{proof}

\begin{figure}[t]
  \begin{center}
    \includegraphics[width=0.75\textwidth]{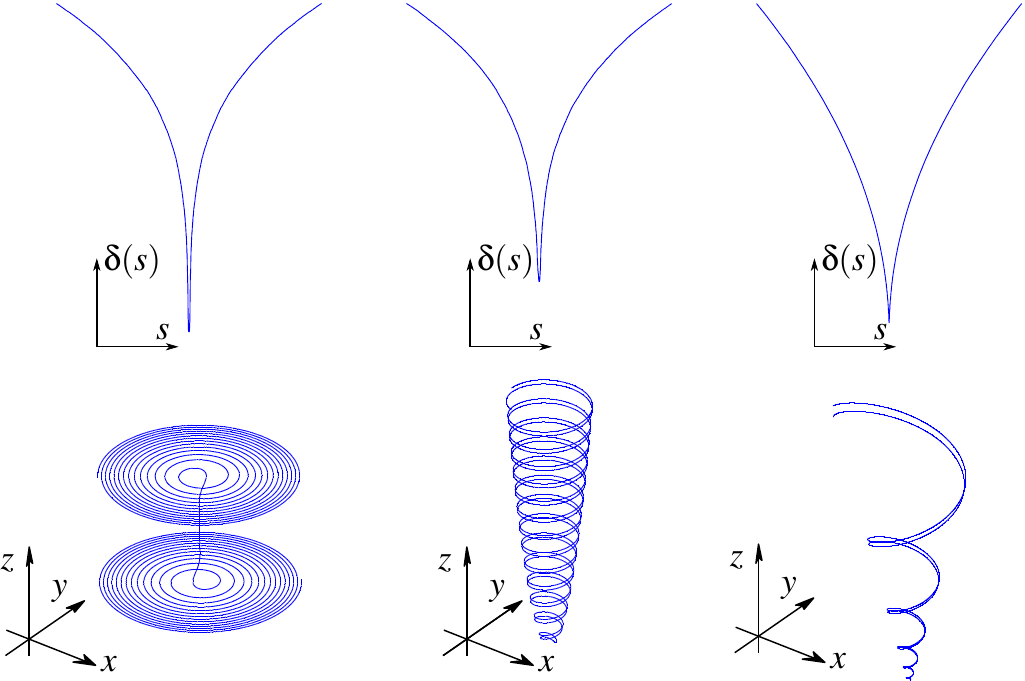}
  \end{center}
  \caption{Plot of the distance to the rotation axis for a number of
  solitons. }
  \label{fig:ZDistance}
\end{figure}

\subsection{Unbounded ends for non-shrinking solitons}
\label{sec:unbounded-ends-nonshrinking}

\begin{lemma}
  If $\vC$ is a rotating non-shrinking soliton, i.e.~a solution
  of~(\ref{eq:soliton-profile}) with $\vv=0$ and $\alpha\geq 0$, then both
  ends of $\vC$ are unbounded, i.e.
  \[
  \lim_{s\to \pm \infty}\|\vC(s)\| = \infty.
  \]
  \label{lem:non-shrinkers-unbounded}
\end{lemma}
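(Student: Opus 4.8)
The plan is to reduce everything to the scalar distance function $\delta(s) = \tfrac12\|\vC(s)\|^2$ and its governing ODE. Since $\vv=0$, Lemma~\ref{lem:length-of-W-ode} (equation (\ref{eq:length-of-C-ode})) gives
\[
\delta_{ss} - \lambda(s)\,\delta_s - 2\alpha\,\delta = 1 .
\]
It suffices to prove $\delta(s)\to\infty$ as $s\to+\infty$, the other end being handled by running the identical argument in the direction $s\to-\infty$. The first step is a structural observation that requires no calculation: at any critical point $s_0$ of $\delta$ the equation forces $\delta_{ss}(s_0)=1+2\alpha\,\delta(s_0)\ge 1>0$, so \emph{every} critical point of $\delta$ is a strict local minimum. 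Consequently $\delta$ can have at most one critical point (two minima would bracket a maximum, which is forbidden), and in particular $\delta$ is monotone on some half-line $[s_1,\infty)$.

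Next I would argue by contradiction, assuming the right end stays bounded, say $\|\vC(s)\|\le M$ on $[0,\infty)$. The purpose of this assumption is to make two quantities bounded. First, $|\delta_s|=|\langle\vC,\vC_s\rangle|\le M$ since $\vC_s$ is a unit vector. Second, and this is the crucial point, formula (\ref{eq:lambda-revealed}) with $\vv=0$ reads $\lambda=-\langle(\alpha+\cA)\vC,\vC_s\rangle$, so $|\lambda|\le\|(\alpha+\cA)\vC\|\le C_0 M$ with $C_0$ depending only on $\alpha$ and $\cA$; thus $\lambda$ is bounded on the end as well. Combining boundedness with the eventual monotonicity from the first step, $\delta$ converges to a finite limit $L$ and $\delta_s$ keeps a constant sign on $[s_1,\infty)$, whence
\[
\int_{s_1}^{\infty}|\delta_s(s)|\,ds = |L-\delta(s_1)| < \infty .
\]

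The contradiction then falls out of integrating the ODE over $[s_1,S]$ and weighing the three resulting terms. The left side $\delta_s(S)-\delta_s(s_1)$ is bounded by $2M$; the forcing integral $\int_{s_1}^{S}(1+2\alpha\delta)\,ds$ grows at least linearly, being $\ge S-s_1$ because $\alpha\ge 0$ and $\delta\ge 0$; while the cross term is controlled by $\bigl|\int_{s_1}^{S}\lambda\,\delta_s\,ds\bigr|\le C_0 M\int_{s_1}^{\infty}|\delta_s|\,ds$, which is finite and independent of $S$. This yields $2M \ge (S-s_1) - C_0 M\,|L-\delta(s_1)|$ for all $S$, absurd as $S\to\infty$. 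I expect the main obstacle to be exactly this cross term $\lambda\,\delta_s$: a priori $\lambda$ could blow up, and the whole argument hinges on two observations working together, namely that boundedness of the end forces $\lambda$ to be bounded (through the $\vv=0$ formula for $\lambda$), and that the single-minimum structure makes $\delta_s$ absolutely integrable, so that $\lambda\,\delta_s$ contributes only a bounded amount while the constant forcing $+1$ accumulates linearly. A pleasant feature is that this reasoning is uniform in $\alpha\ge 0$, so it simultaneously covers the purely rotating case $\alpha=0$ and the rotating-expanding case $\alpha>0$.
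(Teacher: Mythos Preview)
Your argument is correct, and it differs from the paper's in an interesting way.  Both proofs begin with the same observation drawn from the ODE
\[
\delta_{ss}-\lambda\,\delta_s-2\alpha\,\delta=1,\qquad \delta=\tfrac12\|\vC\|^2,
\]
namely that any critical point of $\delta$ satisfies $\delta_{ss}=1+2\alpha\delta\ge 1>0$ and is therefore a strict local minimum, so $\delta$ is eventually monotone.  From there the paper proceeds by a soft compactness argument: assuming the end is bounded it extracts a $C^\infty_{\rm loc}$-convergent subsequence of translates $\vC_j(s)=\vC(s_j+s)$ and obtains a limiting soliton $\vC_*$ on which $\delta$ attains a global \emph{maximum}, contradicting the minimum-only property.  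Your route is instead a direct, quantitative integration of the ODE: boundedness of the end makes both $\delta_s$ and (via the formula $\lambda=-\langle(\alpha+\cA)\vC,\vC_s\rangle$) the coefficient $\lambda$ bounded, eventual monotonicity makes $\delta_s$ absolutely integrable, and then the constant forcing $+1$ accumulates linearly against only bounded contributions on the other side.

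What each buys: your argument is more elementary and self-contained---no Arzel\`a--Ascoli, no limiting object---and it makes explicit the mechanism (the bounded cross term $\lambda\,\delta_s$ versus the linear growth of $\int 1\,ds$).  The paper's compactness argument, on the other hand, is the kind of ``translate and pass to a limit'' technique that recurs throughout the subject and reuses essentially nothing beyond the single ODE fact that $\delta$ has no maxima; it is arguably more portable to situations where one cannot so easily bound the coefficient $\lambda$ directly.
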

\begin{proof}
  We have shown in the previous Lemma that $\|\vC(s)\|$ can have at
  most one minimum.  This implies that either $\lim_{s\to\infty}
  \|\vC(s)\| = \infty$ or else $\sup_{s\geq 0} \|\vC(s)\| <\infty$.
  We will show that the second alternative leads to a contradiction.

  Suppose $K=\limsup_{s\to\infty} \|\vC(s)\| < \infty$.  Choose a
  sequence $s_j\to\infty$ with $\|\vC(s_j)\| \to K$, and consider the
  solitons defined by
  \[
  \vC_j(s) \stackrel{\rm def}= \vC(s_j+s).
  \]
  On any bounded interval $|s|\leq L$ the $\vC_j$ are uniformly
  bounded.  Their derivatives $\vC_{j, s}$ are unit vectors and hence
  also uniformly bounded, and by virtue of the differential equation
  (\ref{eq:soliton-profile-w-pro}) the second derivatives are also
  uniformly bounded.  After passing to a subsequence we may therefore
  assume that the $\vC_j$ converge in $C^\infty_{\rm loc}$ to a
  solution $\vC_*$ of (\ref{eq:soliton-profile-w-pro}).  This limit
  satisfies  $\|\vC_*(s)\|\leq K$ for all $s\in \R$, and
  \[
  \|\vC_*(0)\| = \lim_{j\to\infty} \|\vC_j(s)\|= \lim_{j\to\infty} \|\vC(s_j)\| = K.
  \]
  Thus $\vC_*$ is a rotating-expanding soliton on which $\delta(s) =
  \tfrac12 \|\vC_*(s)\|^2$ attains a maximum at $s=0$.  By
  Lemma~\ref{lem:length-of-W-ode} we then have
  \[
  \delta_{ss}(0) = \lambda\delta_s(0) + \alpha\delta(0) + 1 = \alpha\delta(0)+1.
  \]
  Since $\alpha\geq 0$ we see that $\delta_{ss}(0)\geq 1>0$ so that
  $\delta(s)$ has a local minimum at $s=0$.  This is our contradiction.

  A slight modification of these arguments shows that the other end of
  the soliton is also unbounded, i.e.~that $\lim_{s\to -\infty}
  \|\vC(s)\| = \infty$.
\end{proof}

\begin{lemma}
  Let $\vC$ be a translating-rotating soliton ($\alpha=0$, $\vv\neq0$,
  $\cA\vv=0$).  Then $z(s) = \langle\vv, \vC(s)\rangle$ has at
  most one minimum, and
  \[
  \lim_{s\to \pm \infty} z(s) = \pm\infty.
  \]
  Furthermore, the $R(\cA)$ component $\vW(s)$ of $\vC(s)$ satisfies
  \begin{equation}
    \lim_{s\to \pm \infty}  \|\vW(s)\| = 0 \mbox{ or } \infty.
    \label{eq:W-to-zero-or-infty}
  \end{equation}
  \label{lem:ends-of-translating-rotating}
\end{lemma}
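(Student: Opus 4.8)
\emph{Order of proof.} Since the statement about $\vW$ feeds into the unboundedness of $z$, the plan is to prove the three assertions in the order: (i) $z$ has at most one minimum; (ii) $\|\vW\|\to0$ or $\infty$ at each end; (iii) $z\to\pm\infty$ at each end.

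\emph{Step (i).} Writing $\va=\cA\vC+\vv$ (recall $\alpha=0$) and using $\cA\vv=0$ together with the antisymmetry of $\cA$, I would compute $z_s=\langle\vv,\vC_s\rangle$ and, from~(\ref{eq:soliton-profile}),
\[
z_{ss}=\langle\vv,\vC_{ss}\rangle=\langle\vv,\va+\lambda\vC_s\rangle=\|\vv\|^2+\lambda z_s,
\]
because $\langle\vv,\cA\vC\rangle=-\langle\cA\vv,\vC\rangle=0$. Thus $z_{ss}-\lambda z_s=\|\vv\|^2>0$, which is formally identical to~(\ref{eq:length-of-C-ode}). At any critical point $z_s(s_0)=0$ this gives $z_{ss}(s_0)=\|\vv\|^2>0$, so every critical point is a strict local minimum; hence $z$ admits no local maximum and at most one minimum. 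In particular $z$ is strictly monotone on each end and has a limit in $[-\infty,+\infty]$, so only the infinitude of these limits remains to be shown.

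\emph{Step (ii).} Here I would reuse the perturbation argument from Lemma~\ref{lem:purely-rotatin}. Taking $\cB=\Pi_{R(\cA)}$, which commutes with $\cA$, annihilates $\vv$, and has range $R(\cA)$, Lemma~\ref{lem:length-of-W-one-minimum} shows $\|\vW\|$ is monotone or has a single minimum, so $\|\vW(s)\|\to A\in[0,\infty]$ at each end. Suppose $0<A<\infty$. Then $\|\va\|^2=\|\cA\vW\|^2+\|\vv\|^2$ is bounded, so $|\lambda|\le\|\va\|$ is bounded; since $\lambda'=-\|\vC_{ss}\|^2\le0$ the quantity $\lambda$ has a finite limit $\lambda_\infty$. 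Now $\vW$ satisfies the linear system $\vW_{ss}-\lambda\vW_s=\cA\vW$, whose limiting characteristic exponents solve $\mu^2-\lambda_\infty\mu=\pm i\omega_k$; none of these is purely imaginary (since $\omega_k>0$), so the limiting system is hyperbolic. By the perturbation theorem \cite{CoddingtonLevinson} a bounded solution must then decay to $0$, forcing $A=0$, a contradiction. Hence $A\in\{0,\infty\}$, which is the $\vW$ statement.

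\emph{Step (iii).} I would split according to this dichotomy. On an end where $\|\vW\|\to0$ one has $\|\va\|\to\|\vv\|$, so $\lambda$ is bounded, say $\lambda\ge-M$; choosing a base point $a$ beyond the minimum so that $z_s\ge0$, the integrating factor applied to $z_{ss}-\lambda z_s=\|\vv\|^2$ gives $z_s(s)\ge\|\vv\|^2\int_a^s e^{\int_t^s\lambda}\,dt\ge\frac{\|\vv\|^2}{M}\bigl(1-e^{-M(s-a)}\bigr)$, so $z_s$ stays bounded away from $0$ and $z\to+\infty$ (the $s\to-\infty$ end and the strictly monotone case follow by reflection). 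On an end where $\|\vW\|\to\infty$ I would first argue $\lambda\to-\infty$ at the $s\to+\infty$ end: if $\lambda$ stayed bounded then $\int\|\vC_{ss}\|^2=-\int\lambda'<\infty$ would force $\|\vC_{ss}\|\to0$ along a subsequence, whence $\lambda^2=\|\va\|^2-\|\vC_{ss}\|^2\to\infty$, a contradiction. With $\lambda\to-\infty$ the end consists mostly of low-curvature arcs on which $\vC_s\approx\va/\|\va\|$, so $z_s\approx\langle\vv,\va\rangle/\|\va\|=\|\vv\|^2/\|\va\|>0$, while the trivial bound $\|\vW_s\|\le1$ gives $\|\vW(s)\|\le\|\vW(a)\|+|s-a|$ and hence $\|\va\|\le C|s|$; therefore $z_s\gtrsim1/|s|$ and $z\to+\infty$ by logarithmic divergence of $\int z_s$, with the symmetric sign analysis ($\lambda\to+\infty$, $z_s<0$) handling the $s\to-\infty$ end.

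\emph{Main obstacle.} I expect the final case to be the crux: turning $\vC_s\approx\va/\|\va\|$ into a rigorous one-sided bound $z_s\gtrsim\|\vv\|^2/\|\va\|$ requires controlling the error in the low-curvature approximation~(\ref{eq:low-curvature-approximate-ode}) and confining the high-curvature arcs, i.e.\ the high/low curvature decomposition of an unbounded end developed in \S\ref{sec:distance-to-Origin}. Once that decomposition is available, the elementary growth estimate $\|\vW(s)\|\le\|\vW(a)\|+|s-a|$ converts the pointwise bound $z_s\gtrsim1/|s|$ into the desired divergence, completing the proof.
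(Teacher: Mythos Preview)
Your Step~(i) is correct and identical to the paper's. Your Step~(ii) is a valid alternative: the paper instead takes translates $\vC_j(s)=\vC(j+s)-\vZ(j)$, which are again solitons because $\alpha=0$ makes~(\ref{eq:soliton-profile}) invariant under translations in $N(\cA)$, extracts a subsequential limit $\vC_*$ on which $\|\vW_*\|$ is constant, and then invokes Lemma~\ref{lem:length-of-W-one-minimum} (which forbids a nonzero constant $\|\vW\|$) to conclude $\vW_*=0$. Your perturbation-theorem route, patterned on Lemma~\ref{lem:purely-rotatin}, also works provided you check the hypotheses of the Coddington--Levinson theorem for the perturbation $\lambda(s)-\lambda_\infty$.

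The substantive divergence is Step~(iii), and the obstacle you flag is real but entirely avoidable. The paper does \emph{not} split on the $\|\vW\|$-dichotomy; the $z$-argument never uses it. Instead the paper works purely with the scalar ODE $z_{ss}=\lambda z_s+\|\vv\|^2$, the trivial bound $|z_s|\le\|\vv\|$, the monotonicity $\lambda'=-\|\vC_{ss}\|^2\le0$, and the crude linear estimate $|\lambda(s)|\le\|\cA\vC+\vv\|\le C(1+|s|)$. Splitting on the eventual sign of $z_s$: in the case $z_s\ge0$, one first rules out $\lambda>0$ for all large $s$ (the integrating factor would force $z_s\to\infty$, contradicting $z_s\le\|\vv\|$), so $\lambda\le-\delta$ eventually; then variation of constants together with $\lambda(s'')\ge\lambda(s)$ for $s''\le s$ gives directly
\[
z_s(s)\;\ge\;\|\vv\|^2\,\frac{1-e^{\lambda(s)(s-s_1)}}{-\lambda(s)}\;\ge\;\frac{C}{1+s},
\]
hence $z\to+\infty$. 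The case $z_s\le0$ is handled by a parallel integrating-factor computation. No low-curvature approximation, no $\mu$-analysis, no forward reference to \S\ref{sec:Behavior-at-infinity} is needed. Your route via $\mu\to1$ could in principle be made rigorous using Lemma~\ref{lem:diff-ineqs-for-mu-nu}, but that is substantially more machinery than the problem requires and introduces an unwanted forward dependence.
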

\begin{proof}
  By Lemma~\ref{lem:length-of-W-one-minimum} the quantity $\delta(s) =
  \frac12\|\vW(s)\|^2$ has at most one minimum.  It is therefore
  monotone for $s$ sufficiently large.

  If $\delta(s)$ remains bounded for $s\to\infty$, then
  \[
  \delta_\infty = \lim_{s\to\infty} \delta(s)
  \]
  exists.
  Consider the sequence of translates $\vC_j(s) = \vC(j+s) - \vZ(j)$
  ($j\in\N$) of our given soliton.  These curves are again
  solutions of (\ref{eq:soliton-profile}) (since $\alpha=0$ the
  equation is invariant under translations parallel to $N(\cA)$).
  Extracting a convergent subsequence one finds a soliton $\vC_*(s)$
  for which $\frac12\|\vW_*(s)\|^2 = \delta_\infty$ is constant. According to
  Lemma~\ref{lem:length-of-W-one-minimum} this is impossible unless
  $\vW_*(s) =0$ for all $s$.  So we see that for our original soliton
  we either have $\|\vW(s)\| \to\infty$ or $\|\vW(s)\|\to0$ as
  $s\to\infty$.

  The case $s\to -\infty$ follows by reversing the orientation of the
  curve and applying the same arguments.

  For $z(s)$ one has a differential equation,
  \begin{equation}
    z_{ss} = \lambda(s)z_s + \|\vv\|^2.
    \label{eq:rotate-translate-z-ode}
  \end{equation}
  This equation implies, as before, that $z$ is either strictly
  monotone or has a unique local minimum, which then also is a global
  minimum.  

  We also have the following trivial bound for $z_s$
  \[
  |z_s| = \left|\langle\vC_s, \vv\rangle\right| \leq \|\vv\|.
  \]

  \textbf{Case 1: assume that $z_s\geq 0$ for $s\geq s_0$. } 
  The coefficient $\lambda$ is given by (\ref{eq:lambda-revealed}),
  which in the current context is $\lambda = \langle\vC_s, \cA\vC + \vv\rangle$.
  As before we have
  \[
  \frac{d\lambda}{ds} = -\|\vC_{ss}\|^2 \leq 0.
  \]
  We therefore know that $\lambda$ is decreasing.  To show that
  $z(s)\to\infty$ we distinguish between a few cases, depending on the
  behaviour of $\lambda$ for large $s$.

  If $\lambda(s)>0$ for all $s\geq s_0$, then
  (\ref{eq:rotate-translate-z-ode}) implies $ z_{ss} \geq \lambda z_s$
  which, in turn, implies 
  \[
  \ln \frac{z_s(s)}{z_s(s_0)} = \int_0^s \frac{z_{ss}(s')}{z_s(s')} ds'
  \geq \int_{s_0}^s \lambda(s')ds'.
  \]
  Since $z_s\leq \|\vv\|$ for all $s$, we find that 
  \begin{equation}
    \int_{s_0}^\infty \lambda(s) ds \leq \ln \frac{\|\vv\|}{z_s({s_0})} <\infty.
    \label{eq:lambda-integrable-again}
  \end{equation}
  The integrating factor for (\ref{eq:rotate-translate-z-ode})
  is given by
  \begin{equation}
    m(s) = e^{-\int_{s_0}^s \lambda(s')ds'}.
    \label{eq:z-to-inf-proof-integrating-factor}
  \end{equation}
  In view of (\ref{eq:lambda-integrable-again}) we conclude that there exist
  $m_\pm$ such that 
  \[
  0<m_- \leq m(s) \leq m_+ \mbox{ for all }s\geq {s_0}.
  \]
  Integrating (\ref{eq:rotate-translate-z-ode}) we get
  \begin{eqnarray*}
    \bigl(m z_s\bigr)_s &= m(s)\|\vv\|^2 \geq m_-\|\vv\|^2 \\
    &\implies
    m(s) z_s(s) \geq  m_-\|\vv\|^2  s - C\\
    &\implies
    z_s(s) \geq  \frac{m_-}{m_+}\|\vv\|^2  s - C.
  \end{eqnarray*}
  Thus $z_s\to\infty$, which cannot be, since $z_s\leq \|\vv\|$.
  The contradiction shows that $\lambda$ cannot stay positive for all
  $s\ge s_0$.  Since $\lambda$ is strictly decreasing we must have
  $\lambda(s)<0$ for all large enough $s$.  We may assume that
  $\lambda(s) \leq -\delta$ is $s\geq s_1$ for certain $\delta$,
  $s_1$.

  Applying the variation of constants formula to
  (\ref{eq:rotate-translate-z-ode}) we find
  \begin{eqnarray*}
    z_s(s)
    &=&
    e^{\int_{s_1}^s\lambda(s')ds'}z_s({s_1})
    +
    \|\vv\|^2\int_{s_1}^s e^{\int_{s'}^s \lambda(s'')ds''}ds' \quad\mbox{(drop the
    1st term)} \\
    & \geq &
    \|\vv\|^2\int_{s_1}^s e^{\int_{s'}^s \lambda(s'')ds''}ds'\quad \mbox{(use
    $\lambda(s'')>\lambda(s)$)}\\
    & \geq &
    \|\vv\|^2\int_{s_1}^s e^{\lambda(s)(s-s')}ds'  \\
    & \geq & \|\vv\|^2 \; \frac{1-e^{\lambda(s)(s-s_1)}}{-\lambda(s)}. 
  \end{eqnarray*}
  The formula (\ref{eq:lambda-revealed}) for $\lambda$ implies
  \[
  |\lambda(s)| \leq \|\cA\vC(s)+\vv\| \leq C(1+s)
  \]
  since $\|\vC(s)\| \leq \|\vC(0)\| + |s|$.

  Thus we have $z_s\geq C/(1+s)$, which, upon integration, shows that
  $z(s)\to\infty$ as $s\to\infty$.  We are done with the case in which
  $z_s>0$ for $s\geq s_0$.

  \textbf{Case 2: assume that $z_s\leq 0$ for $s\geq s_0$. }
  If additionally we assume that $\lambda\leq 0$ for large enough $s$, then
  $z_{ss} = \lambda z_s + \|\vv\|^2 \geq\|\vv\|^2$.  Integrating, we find that
  $z_s$ would have to become positive for large enough $s$, which rules out the
  possibility that $\lambda(s)\leq0$ for some large enough $s$.

  We are left with the case that $\lambda\geq0$ for all $s\geq s_0$.  Since
  $\lambda$ is decreasing this implies $0\leq \lambda(s)\leq \lambda(s_0)$ for all
  $s\geq s_0$.

  Again consider the integrating factor $m(s)$ from
  (\ref{eq:z-to-inf-proof-integrating-factor}).
  We have for $s_0<s<s_1$
  \begin{equation}
    -m(s)z_s(s) = -m(s_1) z_s(s_1) + \|\vv\|^2 \int_{s}^{s_1} m(s') ds'.
    \label{eq:z-to-inf-proof-case-2-voc}
  \end{equation}
  All terms are positive so we can let $s_1\to\infty$ and conclude that
  \[
  \int_{s_0}^\infty m(s)ds<\infty.
  \]
  Since $m(s)$ is nonincreasing this implies that $\lim_{s\to\infty} m(s) = 0$,
  so that we can also conclude from (\ref{eq:z-to-inf-proof-case-2-voc}) that
  \[
  -z_s(s) = \|\vv\|^2 \int_{s}^{\infty} \frac{m(s')}{m(s)}ds'.
  \]
  Use $0\leq \lambda(s)\leq \lambda(s_0)$ to estimate
  \[
  \frac{m(s')}{m(s)} = e^{\int_{s}^{s'}\lambda(s'')ds''} \geq
  e^{-\lambda(s_0)(s'-s)},
  \]
  and from there
  \[
  -z_s(s) \geq \frac{\|\vv\|^2}{\lambda(s_0)}
  \]
  for all $s\geq s_0$.  After integration this implies that $\lim_{s\to\infty}
  z(s) = -\infty$.

\end{proof}

\subsection{Rotating Shrinking Solitons in $\R^n$ and Rotating
  Solitons on $\Sph^{n-1}$}
\label{sec:rotators-on-sphere}
We observed in \S\ref{sec:CS-Sn-1} that there is a one-to-one
correspondence between Curve Shortening on the sphere $\Sph^{n-1}$ and
certain shrinking solutions of Curve Shortening in $\R^n$.  Under this
correspondence some of the rotating shrinking solitons we study in
this paper correspond to purely rotating solitons on $\Sph^{n-1}$ as
studied by Hungerb\"uhler and Smoczyk in
\cite{2000HungerBuehlerSmoczyk} (e.g.~see Figure 6 in
\cite{2000HungerBuehlerSmoczyk}).  In particular, every rotating
soliton on $\Sph^{n-1}$ is also a shrinking rotating soliton for Curve
Shortening on $\R^n$.  The following Lemma shows directly that there
is a large class of rotating shrinking solitons that lie on a sphere.

\begin{lemma}
  Let $\alpha<0$ and $\vv=0$.  Then the rotating soliton that is
  tangent to the sphere with radius $(-\alpha)^{-1/2}$ is entirely
  contained in that sphere.
\end{lemma}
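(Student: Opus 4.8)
The plan is to reduce everything to the second order ODE for the squared distance that was already derived. Since $\vv=0$, Lemma~\ref{lem:length-of-W-ode} applies in the form (\ref{eq:length-of-C-ode}): the function $\delta(s)=\tfrac12\|\vC(s)\|^2$ satisfies
\[
  \delta_{ss} - \lambda(s)\delta_s - 2\alpha\delta = 1,
\]
where $\lambda(s)$ is the smooth coefficient determined by the given soliton through (\ref{eq:lambda-revealed}). First I would translate the tangency hypothesis into initial data for this equation. To say that the soliton is tangent to the sphere of radius $(-\alpha)^{-1/2}$ at some parameter value $s_0$ means that $\|\vC(s_0)\|=(-\alpha)^{-1/2}$ and that the unit tangent $\vC_s(s_0)$ is orthogonal to the radial direction $\vC(s_0)$. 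In terms of $\delta$ these two conditions read precisely
\[
  \delta(s_0) = -\tfrac{1}{2\alpha}, \qquad \delta_s(s_0) = \langle\vC,\vC_s\rangle(s_0) = 0,
\]
the first because $\tfrac12\|\vC(s_0)\|^2 = \tfrac12(-\alpha)^{-1}$, and the second because tangency to the sphere is exactly the vanishing of the radial derivative.

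The key observation is that the constant function $\delta_*(s)\equiv -\tfrac{1}{2\alpha}$ is itself a solution of the inhomogeneous equation: substituting gives
\[
  0 - \lambda(s)\cdot 0 - 2\alpha\bigl(-\tfrac{1}{2\alpha}\bigr) = 1,
\]
which holds for every value of $\lambda(s)$, independently of the particular soliton. Thus along the given curve both $\delta$ and the constant $\delta_*$ satisfy one and the same second order linear ODE with the same smooth coefficient $\lambda(s)$.

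Then I would set $u = \delta - \delta_*$, which satisfies the homogeneous linear equation $u_{ss} - \lambda(s)u_s - 2\alpha u = 0$ together with the initial conditions $u(s_0)=0$ and $u_s(s_0)=0$ coming from the tangency data above. Since $\lambda$ is smooth, uniqueness for the initial value problem for linear ODEs forces $u\equiv 0$, hence $\|\vC(s)\|^2 = -1/\alpha$ for all $s$, and the soliton is contained in the sphere of radius $(-\alpha)^{-1/2}$. I do not expect a genuine obstacle here; the only points requiring care are checking that tangency is exactly the condition $\delta_s(s_0)=0$, and noticing that the constant solves the \emph{inhomogeneous} equation regardless of $\lambda$, so that the difference $u$ genuinely solves a homogeneous linear ODE to which the uniqueness theorem applies.
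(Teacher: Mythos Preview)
Your proof is correct and follows essentially the same approach as the paper: both use the ODE \eqref{eq:length-of-C-ode} for $\delta=\tfrac12\|\vC\|^2$, observe that the constant $-\tfrac{1}{2\alpha}$ is a solution, and invoke uniqueness for the initial value problem. The paper states this more tersely, but your explicit verification that the constant solves the inhomogeneous equation and your reduction to the homogeneous problem for $u=\delta-\delta_*$ make the uniqueness step cleaner.
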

\begin{proof}
  The squared distance $\delta(s) = \frac12 \|\vC(s)\|^2$ to the
  origin satisfies \eqref{eq:length-of-C-ode}.  Uniqueness of
  solutions of differential equations implies that if
  $\delta(s_0)=1/(-2\alpha)$ and $\delta'(s_0) = 0$ holds for some
  $s_0$ then $\delta(s) = 1/(-2\alpha)$ for all $s\in\R$.
\end{proof}

\section{Behavior at infinity}
\label{sec:Behavior-at-infinity}

\subsection{High and low curvature decomposition of soliton ends}
In this section we consider the behaviour of solitons far away from the origin.
The main finding is that when $\vC$ (or rather its projection onto
$R(\alpha+\cA)$) is large, the soliton $\vC$ decomposes into two parts.  One
part consists of short arcs of high curvature on which $\vC$ is approximately
constant.  On such arcs the soliton is approximated by a solution of
\begin{equation}
  \vC_{ss} = \pro_{\vC_s}(\va)
  \label{eq:fast-flow}
\end{equation}
for some constant vector $\va$ (one has $\va \approx (\alpha+\cA)\vC+\vv$).  The
solutions to (\ref{eq:fast-flow}) with constant $\va$ are exactly the Grim
Reapers with translation velocity vector $\va$.

On the other part of the soliton the curvature $\vC_{ss}$ is small, and in fact
much smaller than the remaining terms in (\ref{eq:soliton-profile-w-pro}).  It
follows that on these low curvature arcs the vector $(\alpha+\cA)\vC + \vv$ is
nearly parallel to $\vC_s$.  Low curvature arcs are therefore approximately
solutions of
\begin{equation}
  \vC_s = \pm \ah(\vC)
  \label{eq:slow-flow}
\end{equation}
where
\begin{equation}
  \va(\vC) = (\alpha+\cA)\vC + \vv,
  \label{eq:a-ahat-def}
  \qquad \ah = \frac{\va} {\|\va\|}.
\end{equation}
After reparametrization (\ref{eq:slow-flow}) is equivalent to a linear equation.
Namely, if one introduces a parameter related to arc length via
\begin{equation}
  \sigma = \int \frac{ds}{\|\va(\vC(s))\|},
  \label{eq:sigma-def}
\end{equation}
then (\ref{eq:slow-flow}) is equivalent with $\vC_\sigma = \|\va\| \vC_s = \pm
\va(\vC)$, i.e.
\begin{equation}
  \pm \frac{d\vC}{d\sigma} = (\alpha+\cA) \vC + \vv.
  \label{eq:slow-flow-linear}
\end{equation}
When $\alpha\neq0$ we may assume that $\vv=0$, and the solutions to
(\ref{eq:slow-flow-linear}) are given by
\begin{equation}
  \vC = e^{\pm\sigma(\alpha+\cA)}\vC_0
  \label{eq:generalized-log-spiral}
\end{equation}
for arbitrary $\vC_0$.  These curves rotate according to $\cA$ and
simultaneously move toward or away from the origin at an exponential rate
$e^{\alpha\sigma}$.  In the simplest, two-dimensional case we get logarithmic
spirals; in higher-dimensional cases we get curves that spiral inward or outward
on cones.  Below we show that the low curvature parts of any soliton with
$\alpha\neq0$ are actually $\cO(1)$ close to one of the generalized logarithmic
spirals (\ref{eq:generalized-log-spiral}).

If the dilation parameter $\alpha=0$, then we must allow $\vv\neq 0$, although
we still have $\vv\in N(\cA)$.  In this case we can split $\vC$ into its
components in the null space and range of $\cA$,
\[
\vC = \vW+\vZ, \quad \vW\in R(\cA), \quad \vZ\in N(\cA).
\]
Then (\ref{eq:slow-flow-linear}) decouples into $ \vW_\sigma = \cA\vW$,
$\vZ_\sigma = \vv$, which shows that the general solution to
(\ref{eq:slow-flow-linear}) is
\begin{equation}
  \vC = e^{\sigma\cA}\vW_0 + \sigma \vv +\vZ_0
  \label{eq:generalized-helices}
\end{equation}
with $\vW_0\in R(\cA)$, $\vZ_0\in N(\cA)$ arbitrary constants.

The simplest case is again two dimensional with $N(\cA)=\{0\}$ and $\vv=0$.
Here the rotating solitons are the Yin-Yang curves that spiral outward.  On the
other hand, the corresponding solution (\ref{eq:generalized-helices}) to
(\ref{eq:slow-flow-linear}) parametrizes a circle; in particular, it remains
bounded.  This suggests that for $\alpha=0$ the solution of
(\ref{eq:generalized-helices}) can provide a good approximation of finite length
segments of the solitons, but that the approximation is not uniform as
$\sigma\to\pm\infty$.

\subsection{Small and distant Grim Reapers}
In what follows we will make the preceding discussion precise.  We
write the soliton equation (\ref{eq:soliton-profile-w-pro}) as
\begin{equation}
  \vT_s = \|\va\|\;\pro_{\vT}(\ah),
\end{equation}
and consider the quantities
\[
\mu = \langle\ah, \vT\rangle,
\qquad 
\nu = \|\va\|^4 \bigl(1-\mu^2\bigr).
\]
Here $\mu$ the cosine of the angle between the unit tangent $\vT$ and
$\ah(\vC)$.  If $\mu = \pm1$ then $\ah=\pm\vT$.  More generally, if
$\mu$ is close to $\pm1$ then $\ah$ almost lines up with $\vT$.  In
particular, we have the identities
\begin{equation}
  \|\ah-\vT\|^2 = 2(1-\mu),\qquad
  \|\ah+\vT\|^2 = 2(1+\mu).
  \label{eq:ah-pm-vT-and-mu}
\end{equation}
Since $\vC_{ss} = \va - \langle\va, \vT\rangle\vT = \|\va\| \bigl( \ah
-\mu\vT \bigr)$, 
and since $\vC_{ss}$ and $\vT$ are orthogonal, one finds that the
curvature of the soliton is given by
\begin{equation}
  \|\vC_{ss}\| = \|\va\|\;\sqrt{1-\mu^2}.
  \label{eq:and-the-curvature-is}
\end{equation}
Therefore
\[
\nu = \Bigl(\|\va\|\,\|\vC_{ss}\|\Bigr)^2
\]
can be interpreted as a scale-invariant curvature of the soliton.

We begin with an estimate for the rate at which $\mu$ and $\nu$ change
along a soliton.
\begin{lemma}
  \label{lem:diff-ineqs-for-mu-nu}
  There is a constant $C$, which only depends on $\alpha$ and $\cA$, such
  that $\mu$ satisfies
  \begin{equation}
    \|\va\|^{-1}\mu_s 
    \geq 1-\mu^2 - \frac{C} {\|\va\|^2} \sqrt{1-\mu^2}.
    \label{eq:mu-evolution}
  \end{equation}
  The quantity
  \[
  \nu = \|\va\|^4 (1-\mu^2)
  \]
  satisfies
  \begin{eqnarray}
    \label{eq:nu-evolution-bound-mupos}
    \|\va\|^{-1}\nu_s &\leq& -2\mu\nu + C\mu\sqrt{\nu} + \frac{C}
    {\|\va\|^2}\nu
    \mbox{ when $\mu>0$, and }  \\
    \label{eq:nu-evolution-bound-muneg}
    \|\va\|^{-1}\nu_s &\geq& -2\mu\nu - C\mu\sqrt{\nu} - \frac{C}
    {\|\va\|^2}\nu \mbox{ when $\mu<0$.}
  \end{eqnarray}
\end{lemma}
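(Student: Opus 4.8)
The plan is to derive both estimates by differentiating the scalar quantities $\mu=\langle\ah,\vT\rangle$ and $\nu=\|\va\|^4(1-\mu^2)$ along the soliton, isolating the leading term in each derivative and absorbing the rest into the displayed error terms. The only structural inputs are the soliton equation in the form $\vT_s=\|\va\|\,\pro_\vT(\ah)$, the observation that $\va$ depends on $s$ only through $\vC$ so that $\va_s=(\alpha+\cA)\vT$, and two identities coming from the skew-symmetry of $\cA$: $\langle\cA\vT,\vT\rangle=0$ and $\langle\ah,\cA\ah\rangle=0$. The second is the essential one. Writing $\vT=\mu\ah+\vT^\perp$ with $\vT^\perp\perp\ah$ and $\|\vT^\perp\|=\sqrt{1-\mu^2}$, it gives $\langle\ah,\cA\vT\rangle=\langle\ah,\cA\vT^\perp\rangle$, and hence the key bound $|\langle\ah,\cA\vT\rangle|\le c\,\sqrt{1-\mu^2}$, where $c$ is the operator norm of $\cA$. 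This lone factor $\sqrt{1-\mu^2}$ is what forces every error term to be small where the curve is nearly a low-curvature arc.

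For $\mu$, I would differentiate, use $\langle\ah,\vT_s\rangle=\|\va\|(1-\mu^2)$, $\ah_s=\|\va\|^{-1}\pro_{\ah}(\va_s)$, and $\langle\va_s,\vT\rangle=\alpha$ to reach
\[
\mu_s=\|\va\|(1-\mu^2)+\frac1{\|\va\|}\bigl(\alpha(1-\mu^2)-\mu\,\langle\ah,\cA\vT\rangle\bigr).
\]
The leading term is $\|\va\|(1-\mu^2)$; using $1-\mu^2\le\sqrt{1-\mu^2}$, $|\mu|\le1$, and the key bound, the remainder is at least $-\|\va\|^{-1}(|\alpha|+c)\sqrt{1-\mu^2}$. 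Dividing by $\|\va\|$ yields \eqref{eq:mu-evolution} with $C=|\alpha|+c$.

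For $\nu$ the scheme is identical but the algebra is longer. Writing $\nu=\|\va\|^4-\|\va\|^2\langle\va,\vT\rangle^2$ (note $\langle\va,\vT\rangle=\|\va\|\mu$), differentiating, and substituting $(\|\va\|^2)_s=2\|\va\|(\alpha\mu+\langle\ah,\cA\vT\rangle)$ together with the formula for $\mu_s$, all terms collapse to
\[
\|\va\|^{-1}\nu_s=-2\mu\nu+\frac{2\alpha\mu}{\|\va\|^2}\,\nu+(4-2\mu^2)\,\|\va\|^2\langle\ah,\cA\vT\rangle.
\]
The leading term $-2\mu\nu$ is exactly the one in the statement. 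The first remainder is $\le\tfrac{2|\alpha|}{\|\va\|^2}\nu$ in absolute value, already of the admissible form. The second is the delicate one: the key bound gives $|(4-2\mu^2)\|\va\|^2\langle\ah,\cA\vT\rangle|\le 4c\,\|\va\|^2\sqrt{1-\mu^2}=4c\sqrt{\nu}$, where the last equality is the scale identity $\|\va\|^2\sqrt{1-\mu^2}=\sqrt{\nu}$.

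The main obstacle is that this last estimate produces a bare $\sqrt\nu$, while the statement only permits $C\mu\sqrt\nu$ together with $\tfrac{C}{\|\va\|^2}\nu$. The resolution is to read the same scale identity the other way, $\tfrac1{\|\va\|^2}\nu=\sqrt{1-\mu^2}\,\sqrt\nu$, and combine it with the elementary inequality $1\le|\mu|+\sqrt{1-\mu^2}$ for $|\mu|\le1$; together these give $\sqrt\nu\le|\mu|\sqrt\nu+\tfrac1{\|\va\|^2}\nu$, which splits the troublesome $4c\sqrt\nu$ precisely into the two allowed error terms. Collecting constants produces the single symmetric estimate $\bigl|\,\|\va\|^{-1}\nu_s+2\mu\nu\,\bigr|\le C|\mu|\sqrt\nu+\tfrac{C}{\|\va\|^2}\nu$ with $C=2|\alpha|+4c$. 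Reading this as an upper bound when $\mu>0$ gives \eqref{eq:nu-evolution-bound-mupos}, and as a lower bound when $\mu<0$ gives \eqref{eq:nu-evolution-bound-muneg}, the $\sqrt\nu$-term entering through $|\mu|$ in each case; the two cases are in fact interchanged by the orientation reversal $s\mapsto-s$, which flips the signs of $\mu$ and of $\nu_s$ while preserving the soliton equation.
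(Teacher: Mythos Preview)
Your argument is correct and close in spirit to the paper's, but the execution for $\nu$ differs in a way worth noting. Both you and the paper handle $\mu_s$ essentially the same way (the paper bounds $\langle\pro_{\ah}\vT,(\alpha+\cA)\vT\rangle$ by Cauchy--Schwarz using $\|\pro_{\ah}\vT\|=\sqrt{1-\mu^2}$; you use the equivalent decomposition $\vT=\mu\ah+\vT^\perp$ with skew-symmetry of $\cA$). For $\nu$, however, the paper simply writes $\|\va\|^{-1}\nu_s=-2\mu\|\va\|^3\mu_s+4\|\va\|^2\|\va\|_s(1-\mu^2)$ and substitutes the inequality already proved for $\mu_s$, together with the crude bound $\bigl|\,\|\va\|_s\bigr|\le\|\alpha+\cA\|$. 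Because the $\mu_s$ term enters multiplied by $-2\mu$, the factor $\mu$ on the $\sqrt\nu$ error appears for free, and nothing further is needed. You instead push through to an exact identity for $\|\va\|^{-1}\nu_s$ and are left with a bare $\sqrt\nu$ error; the inequality $1\le|\mu|+\sqrt{1-\mu^2}$ combined with $\|\va\|^{-2}\nu=\sqrt{1-\mu^2}\,\sqrt\nu$ is a correct and rather neat device to split it into the allowed pieces, but it is a detour the paper's route avoids entirely. Your exact formula does buy a two-sided bound and explicit constants; the paper's substitution is shorter.

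One minor remark: both your computation and the paper's own yield, for $\mu<0$, the lower bound with error $-C|\mu|\sqrt\nu=+C\mu\sqrt\nu$, not the $-C\mu\sqrt\nu$ literally printed in \eqref{eq:nu-evolution-bound-muneg}; that sign in the statement appears to be a slip (harmless for the later applications, where $|\mu|$ is close to $1$).
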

\begin{proof}
  We begin with
  \[
  \va_s =  (\alpha+\cA)\vT,\qquad
  \ah_s =  \|\va\|^{-1} \, \pro_{\ah}(\va_s) = \|\va\|^{-1} \,
  \pro_{\ah}\bigl((\alpha+\cA)\vT\bigr).
  \]
  This leads to
  \begin{eqnarray*}
    \mu_s
    &=& \langle\ah, \vT_s\rangle + \langle\ah_s, \vT\rangle \\
    &=& \|\va\|\langle\ah, \pro_{\vT}\ah\rangle +
    \|\va\|^{-1}\Bigl\langle \vT,
    \pro_{\ah}\bigl[(\alpha+\cA)\vT\bigr]
    \Bigr\rangle\\
    &=& \|\va\|(1-\mu^2)
    + \|\va\|^{-1}\bigl\langle\pro_\ah\vT, (\alpha+\cA)\vT\bigr\rangle\\
  \end{eqnarray*}
  Next, we observe that $\|\pro_\ah\vT\|^2 = 1-\mu^2$ so that
  \begin{eqnarray*}
    \|\va\|^{-1}\mu_s
    &\geq& 1-\mu^2 - \|\va\|^{-2} \|\alpha+\cA\| \sqrt{1-\mu^2}\\
    &=& 1-\mu^2 - \frac{C} {\|\va\|^2} \sqrt{1-\mu^2}.
  \end{eqnarray*}

  The quantity $\nu = \|\va\|^4(1-\mu^2)$ satisfies
  \[
  \|\va\|^{-1}\nu_s = -2\mu\|\va\|^3 \mu_s + 4\|\va\|^2\|\va\|_s
  (1-\mu^2)
  \]
  Using (\ref{eq:mu-evolution}) and
  \[
  \bigl|\; \|\va\|_s\bigr| \leq \|\va_s\| = \|(\alpha+\cA)\vT\| \leq
  \|\alpha+\cA\|
  \]
  we find that when $\mu>0$ one has
  \begin{eqnarray*}
    \|\va\|^{-1}\nu_s &\leq& -2\mu\|\va\|^4 \Bigl( 1-\mu^2 - \frac{C}
    {\|\va\|^2} \sqrt{1-\mu^2} \Bigr) + 4\|\va\|^2\|\alpha+\cA\|
    (1-\mu^2)\\
    &\leq& -2\mu\nu + C\mu\sqrt{\nu} + \frac{C} {\|\va\|^2}\nu,
  \end{eqnarray*}
  as claimed.  The inequality for $\mu<0$ follows in the same way.
\end{proof}

The following lemma tells us where the regions of large curvature are.
\begin{lemma}\label{lem:GR-here}
  There is a constant $C<\infty$ depending only on $\alpha$ and $\cA$
  such that for any point $\vC_0 = \vC(s_0)$ on a solution of
  (\ref{eq:soliton-profile-w-pro}) that satisfies
  \begin{equation}
    A_0 = \|\va(\vC_0)\| \geq C,
    \mbox{ and }
    |\mu(\vC_0, \vT_0)| \leq 1 - CA_0^{-4}
    \label{eq:T-a-angle-not-small}
  \end{equation}
  there exist $s_- < s_+$ with $s_0\in[s_-, s_+]$ and
  \begin{equation}
    s_+ - s_- \leq \frac{C}{A_0} \ln A_0,
    \label{eq:len-of-GR-bound}
  \end{equation}
  such that at the ends of the arc $\vC([s_-, s_+])$ one has
  \begin{equation}
    \mu(s_-) = - \bigl( 1 - CA_0^{-4}\bigr),\mbox{ and }
    \mu(s_+) = \bigl( 1 - CA_0^{-4}\bigr).
    \label{eq:mu-changes-sign-along-arc}
  \end{equation}
\end{lemma}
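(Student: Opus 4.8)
The plan is to treat the scalar differential inequality (\ref{eq:mu-evolution}) for $\mu$ as a Riccati-type comparison inequality: I would show that along the soliton $\mu$ is strictly increasing wherever $|\mu|$ is bounded away from $1$, and then separate variables to bound the arc length needed for $\mu$ to sweep from $-(1-CA_0^{-4})$ up to $+(1-CA_0^{-4})$. Throughout, write $C_1 = \|\alpha+\cA\|$ for the constant of Lemma~\ref{lem:diff-ineqs-for-mu-nu} and abbreviate $\delta = CA_0^{-4}$; all the numerical constants that appear will be absorbed into the single large constant $C$, chosen at the end to depend only on $\alpha$ and $\cA$.

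The first ingredient is that $\|\va\|$ varies slowly. Since $\va_s = (\alpha+\cA)\vT$ with $\vT$ a unit vector, one has $\bigl|\,\|\va\|_s\,\bigr| \le C_1$, so on any arc of length $L$ containing $s_0$ the quantity $\|\va\|$ stays within $[A_0 - C_1 L,\, A_0 + C_1 L]$. Assuming provisionally that $\|\va\| \ge A_0/2$ on the arc in question, I rewrite (\ref{eq:mu-evolution}) as
\[
\mu_s \ge \|\va\|\sqrt{1-\mu^2}\,\Bigl(\sqrt{1-\mu^2} - \tfrac{C_1}{\|\va\|^2}\Bigr).
\]
Whenever $|\mu| \le 1-\delta$ we have $1-\mu^2 \ge \delta = CA_0^{-4}$, hence $\sqrt{1-\mu^2} \ge \sqrt{C}\,A_0^{-2}$, while $C_1/\|\va\|^2 \le 4C_1 A_0^{-2}$. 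Taking $C \ge 64 C_1^2$ forces $C_1/\|\va\|^2 \le \tfrac12\sqrt{1-\mu^2}$, so the parenthesis is at least $\tfrac12\sqrt{1-\mu^2}$ and therefore
\[
\mu_s \ge \tfrac12\|\va\|\,(1-\mu^2) \ge \tfrac{A_0}{4}\,(1-\mu^2).
\]
In particular $\mu$ is strictly increasing as long as $|\mu| \le 1-\delta$ and $\|\va\| \ge A_0/2$.

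The arc and its length then fall out by separation of variables. Running $s$ forward from $s_0$, $\mu$ increases and reaches $1-\delta$; running $s$ backward, $\mu$ decreases to $-(1-\delta)$. Letting $s_+$ and $s_-$ be the first such times on each side, so that (\ref{eq:mu-changes-sign-along-arc}) holds by construction and $s_0\in[s_-,s_+]$ because $|\mu_0|\le 1-\delta$, integration of $\mu_s/(1-\mu^2) \ge A_0/4$ gives
\[
\tfrac{A_0}{4}\,(s_+-s_-) \le \int_{-(1-\delta)}^{\,1-\delta}\frac{d\mu}{1-\mu^2} = \ln\frac{2-\delta}{\delta} \le C\ln A_0,
\]
which, after absorbing the factor $4$ into $C$, is exactly the bound (\ref{eq:len-of-GR-bound}).

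The one step demanding real care is justifying the provisional bound $\|\va\|\ge A_0/2$, since the length estimate used to show the arc is short itself presupposes it; this circularity is the main (if not deep) obstacle. I would resolve it by a continuity/bootstrap argument: let $(s_-^{*}, s_+^{*})$ be the maximal interval about $s_0$ on which simultaneously $|\mu| < 1-\delta$ and $\|\va\| > A_0/2$. On this interval the estimate $\mu_s \ge \tfrac{A_0}{4}(1-\mu^2)$ is valid, so its length is at most $(C\ln A_0)/A_0$. Since $\|\va\|$ can only fall to $A_0/2$ after arc length $A_0/(2C_1)$, and $(C\ln A_0)/A_0 < A_0/(2C_1)$ once $A_0$ exceeds a constant depending on $\alpha$ and $\cA$, the interval must close off because $\mu$ attains $\pm(1-\delta)$, not because $\|\va\|$ drops. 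Hence $\|\va\| \ge A_0/2$ indeed holds on $[s_-, s_+]$ and the preceding computations are legitimate.
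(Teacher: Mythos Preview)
Your argument is correct and follows essentially the same route as the paper: use the slow variation of $\|\va\|$ to secure $\|\va\|\ge A_0/2$ on a short arc, then use the differential inequality~(\ref{eq:mu-evolution}) to obtain $\mu_s\ge \tfrac12\|\va\|(1-\mu^2)$ and separate variables. The only cosmetic difference is that the paper avoids your bootstrap by simply declaring at the outset an arc of length~$2$ centered at $\vC_0$, noting $\|\va\|\ge A_0/2$ there, computing the length bound $CA_0^{-1}\ln A_0$, and then observing that this is~$<1$ for large $A_0$ so the arc fits inside the length-$2$ window.
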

Since the quantity $\mu$ measures how well the unit tangent $\vT$
matches the vector $\va(\vC)$, this lemma says that if at some point
$\vC_0$ on a soliton the unit tangent is not very close to either
$\ah(\vC)$ or $-\ah(\vC)$, then the point $\vC_0$ is contained in an
arc on which the unit tangent $\vT$ turns from almost $-\ah(\vC)$ to
$+\ah(\vC)$.  The larger $A_0$ is, the smaller the upper bound
(\ref{eq:len-of-GR-bound}) for the length of the arc; if the arc is
very short then $\vC\approx\vC_0$ on the arc, and it approximately
satisfies (\ref{eq:fast-flow}) with $\va = \va(\vC_0)$, i.e.~the arc
is approximately a Grim Reaper.  See Figure~\ref{fig:GR-in-the-distance}.

\begin{figure}[t]
  \centering
  \includegraphics{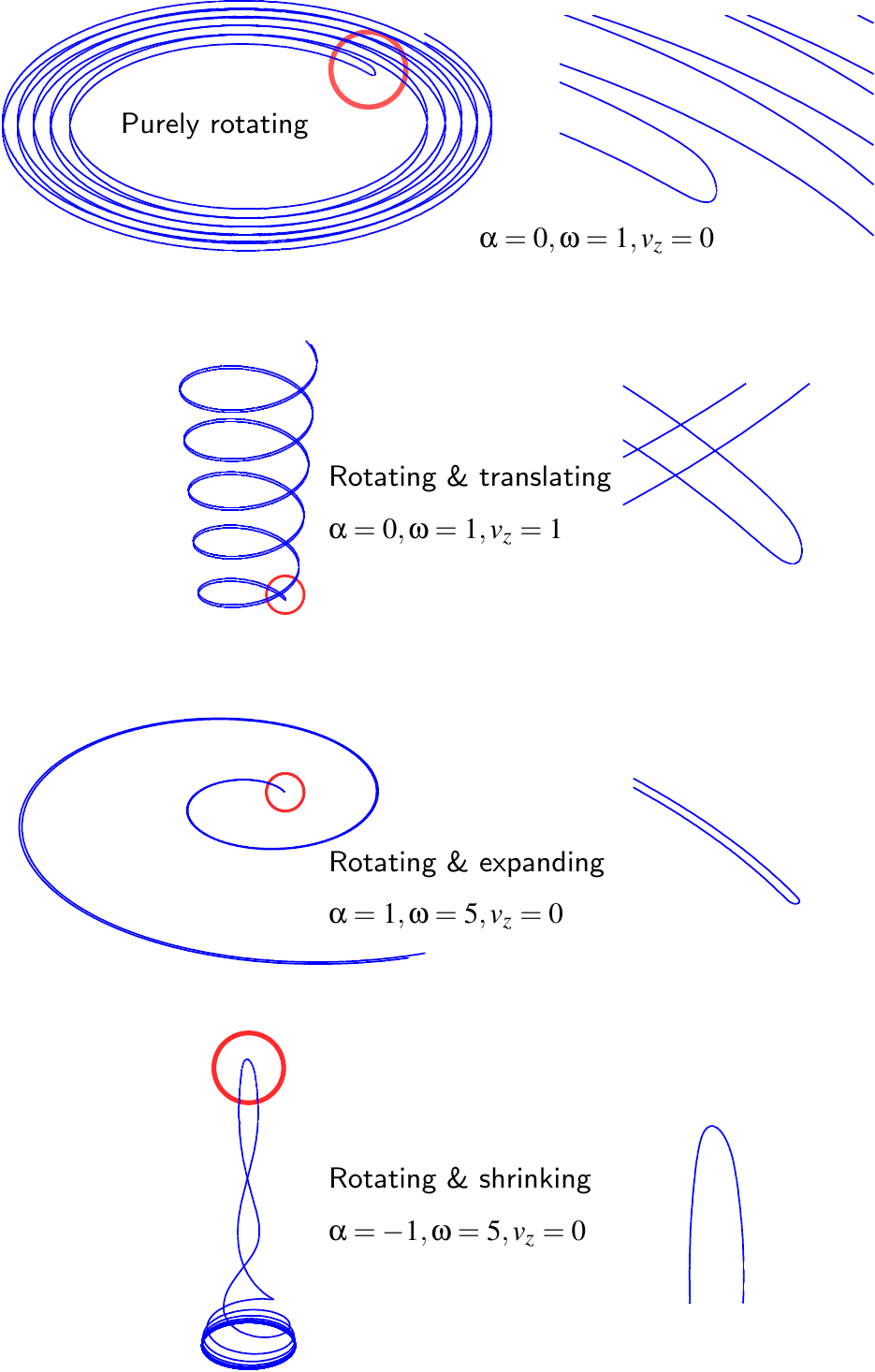}
  \caption{High curvature segments of solitons approximate Grim
    Reapers (Lemma~\ref{lem:GR-here}.)  The solitons here rotate
    around the $z$-axis with rate $\omega$, and translate along the
    $z$-axis with velocity $v_z$.  Magnifications of the high
    curvature segments of the solitons are shown in the right
    column. }
  \label{fig:GR-in-the-distance}
\end{figure}

\begin{proof}
  Consider the arc of length $2$ centered at $\vC_0$.  Along this arc
  we have $\|\vC-\vC_0\| \leq 1$, and hence
  \[
  \Bigl|\|\va\| - \|\va_0\|\Bigr| \leq \|\va - \va_0\| \leq
  \|\alpha+\cA\| \; \|\vC-\vC_0\| \leq C.
  \]
  If we assume that $A_0 \geq 2C$ then this implies that
  \[
  \|\va\| \geq A_0 - C \geq \tfrac12 A_0\mbox{ and } \|\va\| \leq A_0
  + C \leq 2A_0.
  \]
  Along this arc it follows from (\ref{eq:mu-evolution}) that
  \begin{equation}
    \|\va\|^{-1} \mu_s \geq 1-\mu^2 - \frac{4C}{A_0^2} \sqrt{1-\mu^2}
    \geq \tfrac12 (1-\mu^2),
    \label{eq:mu-growth-lowerbound}
  \end{equation}
  provided $\sqrt{1-\mu^2} \geq 8C/A_0^2$, i.e.\ provided
  \[
  |\mu| \leq 1- CA_0^{-4}.
  \]
  Integrating the differential inequality
  (\ref{eq:mu-growth-lowerbound}) we find that the length of any arc
  containing $\vC_0$ on which the above inequality holds is at most
  \[
  \makebox[2em]{$\displaystyle%
  \int\limits_{\mu = -1+CA_0^{-4}} ^{1-CA_0^{-4}}$}ds \leq
  \makebox[4em]{$\displaystyle%
  \int\limits_{-1+CA_0^{-4}} ^{1-CA_0^{-4}}$}
  \frac{2}{\|\va\|}\frac{d\mu}{1-\mu^2} \leq \frac{4}{A_0}
  \int\limits_{-1+CA_0^{-4}} ^{1-CA_0^{-4}} \frac{d\mu}{1-\mu^2} \leq
  \frac{C}{A_0}\ln A_0.
  \]
  If $A_0$ is sufficiently large then $CA_0^{-1}\ln A_0 <1$, and the
  arc is contained within the arc of length 2 with which we began the
  proof.
\end{proof}

\begin{lemma}
  \label{lem:growth-of-a}
  If $\|\va\|\geq C$ and $\mu\geq 1-C\|\va\|^{-4}$, then
  \begin{equation}
    \left|\frac{d\|\va\|}{ds} - \alpha\right| \leq C\|\va\|^{-2}
    \label{eq:a-growth-bound-case-mu-pos}
  \end{equation}
  If $\|\va\|\geq C$ and $\mu\leq -1+C\|\va\|^{-4}$, then
  \begin{equation}
    \left|\frac{d\|\va\|}{ds} + \alpha\right| \leq C\|\va\|^{-2}.
    \label{eq:a-growth-bound-case-mu-neg}
  \end{equation}
\end{lemma}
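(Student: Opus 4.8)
The plan is to differentiate $\|\va\|$ directly and then exploit the antisymmetry of $\cA$ together with the hypothesis that $\mu$ is close to $\pm1$. Recall from the computation at the start of the proof of Lemma~\ref{lem:diff-ineqs-for-mu-nu} that $\va_s = (\alpha+\cA)\vT$. Hence
\[
\frac{d\|\va\|}{ds} = \frac{\langle\va, \va_s\rangle}{\|\va\|}
= \bigl\langle\ah, (\alpha+\cA)\vT\bigr\rangle
= \alpha\mu + \langle\ah, \cA\vT\rangle,
\]
using $\mu = \langle\ah,\vT\rangle$. Everything thus reduces to controlling the single term $\langle\ah, \cA\vT\rangle$ and comparing $\alpha\mu$ with $\pm\alpha$.

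For the $\cA$ term I would decompose $\vT$ into its components along and orthogonal to $\ah$, writing $\vT = \mu\ah + \vw$ with $\vw\perp\ah$ and $\|\vw\|^2 = 1-\mu^2$. Since $\cA$ is antisymmetric we have $\langle\ah, \cA\ah\rangle = 0$, so the diagonal contribution drops out and
\[
\langle\ah, \cA\vT\rangle = \langle\ah, \cA\vw\rangle,
\qquad
|\langle\ah, \cA\vw\rangle| \leq \|\cA\|\,\|\vw\| = \|\cA\|\sqrt{1-\mu^2}.
\]
This is the crucial simplification: the a priori $\cO(1)$ rotation term is in fact controlled by $\sqrt{1-\mu^2}$.

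Finally I would translate the hypotheses on $\mu$ into decay in $\|\va\|$. In the first case $\mu\geq 1-C\|\va\|^{-4}$ gives $1-\mu^2 = (1-\mu)(1+\mu)\leq 2(1-\mu)\leq 2C\|\va\|^{-4}$, so $\sqrt{1-\mu^2}\leq \sqrt{2C}\,\|\va\|^{-2}$, while $|\alpha\mu-\alpha| = |\alpha|(1-\mu)\leq |\alpha|C\|\va\|^{-4}$. Combining via the triangle inequality,
\[
\left|\frac{d\|\va\|}{ds}-\alpha\right|
\leq |\alpha|(1-\mu) + \|\cA\|\sqrt{1-\mu^2}
\leq C'\|\va\|^{-2}
\]
once $\|\va\|\geq 1$, which is the asserted bound (\ref{eq:a-growth-bound-case-mu-pos}). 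The case $\mu\leq -1+C\|\va\|^{-4}$ is identical after replacing $1-\mu$ by $1+\mu$ and comparing $\alpha\mu$ with $-\alpha$, yielding (\ref{eq:a-growth-bound-case-mu-neg}). There is no real obstacle here beyond bookkeeping of constants; the only step requiring care is recognizing that antisymmetry annihilates the $\langle\ah,\cA\ah\rangle$ term, without which the rotation contribution would not decay and the whole estimate would fail.
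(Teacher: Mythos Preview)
Your argument is correct and follows essentially the same route as the paper: compute $d\|\va\|/ds = \langle\ah,(\alpha+\cA)\vT\rangle$, isolate the $\alpha$ contribution, and use antisymmetry of $\cA$ to show the remaining term is controlled by the angle between $\ah$ and $\vT$. The only cosmetic difference is the decomposition used---the paper writes $\ah = \vT + (\ah-\vT)$ and bounds the error by $\|\ah-\vT\|\,\|\alpha+\cA\| = \sqrt{2(1-\mu)}\,\|\alpha+\cA\|$, whereas you write $\vT = \mu\ah + \vw$ and bound by $\|\cA\|\sqrt{1-\mu^2}$ plus a separate $|\alpha|(1-\mu)$ term; the antisymmetry observation (the paper's $\langle\vT,\cA\vT\rangle=0$ versus your $\langle\ah,\cA\ah\rangle=0$) plays the identical role in each.
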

\begin{proof}
  From $\va = (\alpha+\cA)\va + \vv$ we get
  \[
  \frac{d}{ds}\|\va\|
  = \bigl\langle \ah, \va_s\bigr\rangle 
  = \bigl\langle \ah, (\alpha+\cA)\vT \bigr\rangle .
  \]
  If $\mu \geq 1- C\|\va\|^{-4}$, then (\ref{eq:ah-pm-vT-and-mu})
  implies
  \[
  \|\ah-\vT\| = \sqrt{2(1-\mu)} \leq C\|\va\|^{-2},
  \]
  so that
  \begin{eqnarray*}
    \frac{d}{ds}\|\va\| &=&  \bigl\langle \vT, (\alpha+\cA)\vT\bigr\rangle  +
    \bigl\langle \ah-\vT, (\alpha+\cA)\vT\bigr\rangle \\
    &=&  \alpha + \bigl\langle \ah-\vT, (\alpha+\cA)\vT\bigr\rangle 
  \end{eqnarray*}
  where we have used that $(\vT, \cA\vT) = 0$ and $(\vT,\vT)=1$.
  Hence
  \[
  \left|\frac{d\|\va\|}{ds} - \alpha\right| \leq \left|\bigl\langle \ah-\vT,
  (\alpha+\cA)\vT\bigr\rangle \right| \leq \|\ah-\vT\| \; \|\alpha+\cA\|\;
  \|\vT\| \leq C\|\va\|^{-2}.
  \]
  Thus (\ref{eq:a-growth-bound-case-mu-pos}) is proved.  To prove
  (\ref{eq:a-growth-bound-case-mu-neg}) we assume $\mu\leq
  -1+C\|\va\|^{-4}$ and use (\ref{eq:ah-pm-vT-and-mu}) again to
  conclude that $\|\ah+\vT\| \leq C\|\va\|^{-2}$.  From there we get
  \[
  \left|\frac{d\|\va\|}{ds} + \alpha\right| \leq \left|\bigl\langle \ah+\vT,
  (\alpha+\cA)\vT\bigr\rangle \right| \leq \|\ah+\vT\| \; \|\alpha+\cA\|\;
  \|\vT\| \leq C\|\va\|^{-2}.
  \]
\end{proof}

\begin{lemma}\label{lem:a-monotone}
  There is a constant $C$ depending only on $\alpha$ and $\cA$ such
  that for $\alpha>0$ it follows from $\|\va\|\geq C$ that $\|\va\|$
  is increasing when $\mu\geq 1- C\|\va\|^{-4}$ and decreasing when
  $\mu\leq -1 + C\|\va\|^{-4}$.

  On the other hand, if $\alpha<0$ and $\|\va\|\geq C$ then $\|\va\|$
  is decreasing when $\mu\geq 1- C\|\va\|^{-4}$ and increasing when
  $\mu\leq -1 + C\|\va\|^{-4}$.
\end{lemma}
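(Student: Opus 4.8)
The plan is to read off the sign of $d\|\va\|/ds$ directly from the quantitative estimates proved in Lemma~\ref{lem:growth-of-a}, which already pin down the derivative up to an $\cO(\|\va\|^{-2})$ error. The key observation is that both \eqref{eq:a-growth-bound-case-mu-pos} and \eqref{eq:a-growth-bound-case-mu-neg} say that, in the relevant regimes, $d\|\va\|/ds$ differs from $\pm\alpha$ by at most $C\|\va\|^{-2}$; since $\alpha\neq0$ is a fixed nonzero constant while the error term tends to zero as $\|\va\|\to\infty$, the sign of the derivative is forced to agree with the sign of the main term $\pm\alpha$ once $\|\va\|$ is large enough. The whole argument is therefore a matter of choosing the threshold constant $C$ large enough to swamp the error.

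First I would treat the case $\alpha>0$. In the regime $\mu\geq 1-C\|\va\|^{-4}$, estimate \eqref{eq:a-growth-bound-case-mu-pos} gives $d\|\va\|/ds \geq \alpha - C\|\va\|^{-2}$. If I enlarge the threshold constant so that $\|\va\|\geq C$ forces $C\|\va\|^{-2} \leq \tfrac12\alpha$ (concretely, requiring $\|\va\| \geq \sqrt{2C/\alpha}$, which I absorb into a single constant), then $d\|\va\|/ds \geq \tfrac12\alpha > 0$, so $\|\va\|$ is increasing. Symmetrically, in the regime $\mu\leq -1+C\|\va\|^{-4}$, estimate \eqref{eq:a-growth-bound-case-mu-neg} gives $d\|\va\|/ds \leq -\alpha + C\|\va\|^{-2} \leq -\tfrac12\alpha < 0$ once $\|\va\|$ exceeds the enlarged threshold, so $\|\va\|$ is decreasing. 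The case $\alpha<0$ is identical after flipping signs: when $\mu\geq 1-C\|\va\|^{-4}$ the bound $d\|\va\|/ds \leq \alpha + C\|\va\|^{-2} \leq \tfrac12\alpha < 0$ shows $\|\va\|$ decreases, and when $\mu\leq -1+C\|\va\|^{-4}$ the bound $d\|\va\|/ds \geq -\alpha - C\|\va\|^{-2} \geq -\tfrac12\alpha > 0$ shows it increases.

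Since the constant appearing in Lemma~\ref{lem:growth-of-a} and the threshold needed to absorb the error both depend only on $\alpha$ and $\cA$, the enlarged constant $C$ in the present statement again depends only on $\alpha$ and $\cA$, as required. There is no real obstacle here: the lemma is an immediate corollary of the preceding one, and the only mild care needed is bookkeeping of the constant, namely replacing the previous $C$ by $\max\{C, \, 2C/|\alpha|, \, \sqrt{2C/|\alpha|}\}$ or any convenient upper bound, so that a single threshold simultaneously guarantees the validity of the estimates \eqref{eq:a-growth-bound-case-mu-pos}--\eqref{eq:a-growth-bound-case-mu-neg} and the domination of the error term by half the main term $|\alpha|$.
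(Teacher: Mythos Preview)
Your proposal is correct and follows essentially the same approach as the paper: invoke the bounds \eqref{eq:a-growth-bound-case-mu-pos}--\eqref{eq:a-growth-bound-case-mu-neg} from Lemma~\ref{lem:growth-of-a} and enlarge the threshold so that the $C\|\va\|^{-2}$ error is at most $\tfrac12|\alpha|$. Your bookkeeping of the constant is in fact more explicit than the paper's own argument.
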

\begin{proof}
  In the case $\alpha>0$ we let $C_1$ be the constant from
  (\ref{eq:a-growth-bound-case-mu-pos}).  Then if $\|\va\| \geq
  \sqrt{\alpha/2C_1}$ then (\ref{eq:a-growth-bound-case-mu-pos})
  implies that $d\|\va\|/ds \geq \tfrac12\alpha>0$.  So the Lemma
  holds if we set $C=\sqrt{\alpha/2C_1}$.

  The case $\alpha<0$ is entirely analogous.
\end{proof}

Consider the two regions
\begin{eqnarray*}
  \cR_+(K) &=&  \bigl\{ (\vC, \vT) : \|\va\| \geq K, \nu\leq K, \mu>0\bigr\}\\
  \cR_-(K) &=&  \bigl\{ (\vC, \vT) : \|\va\| \geq K, \nu\leq K,
  \mu<0\bigr\}
\end{eqnarray*}
where $\va = (\alpha + \cA)\vC$.

\begin{lemma}\label{lem:a-C-equivalent}
  For any $\vC\in\R^n$ one has 
  \[
  |\alpha|\;\|\vC\| \leq \|(\alpha+\cA)\vC\| \leq \|\alpha+\cA\|\;
  \|\vC\|.
  \]
\end{lemma}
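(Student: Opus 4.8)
The plan is to prove the two inequalities separately, with the right-hand bound being essentially definitional and the left-hand bound resting on the antisymmetry of $\cA$. The upper estimate $\|(\alpha+\cA)\vC\| \leq \|\alpha+\cA\|\,\|\vC\|$ is nothing more than the definition of the operator norm of the linear map $\alpha+\cA$, so there is nothing to do there.

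For the lower bound I would compute the squared norm directly. Writing
\[
\|(\alpha+\cA)\vC\|^2 = \langle (\alpha+\cA)\vC, (\alpha+\cA)\vC\rangle
= \alpha^2\|\vC\|^2 + 2\alpha\langle \vC, \cA\vC\rangle + \|\cA\vC\|^2,
\]
the key observation is that the cross term vanishes. Indeed, since $\cA$ is antisymmetric we have $\langle \vC, \cA\vC\rangle = \langle \cA^t\vC, \vC\rangle = -\langle \cA\vC, \vC\rangle = -\langle \vC, \cA\vC\rangle$, forcing $\langle \vC, \cA\vC\rangle = 0$. This leaves
\[
\|(\alpha+\cA)\vC\|^2 = \alpha^2\|\vC\|^2 + \|\cA\vC\|^2 \geq \alpha^2\|\vC\|^2,
\]
and taking square roots gives $\|(\alpha+\cA)\vC\| \geq |\alpha|\,\|\vC\|$, as desired.

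There is no real obstacle here; the whole content is the orthogonality of $\vC$ and $\cA\vC$, which is just the standard fact that a skew-symmetric matrix has $\langle \vx, \cA\vx\rangle = 0$ for every $\vx$. Geometrically this says that the ``radial'' part $\alpha\vC$ and the ``rotational'' part $\cA\vC$ of the velocity $(\alpha+\cA)\vC$ are always perpendicular, so the Pythagorean identity applies and the radial component alone already forces the lower bound. One could equally phrase the argument spectrally—$(\alpha+\cA)^t(\alpha+\cA) = \alpha^2 I + \cA^t\cA = \alpha^2 I - \cA^2$ is a sum of $\alpha^2 I$ and the positive semidefinite operator $-\cA^2$, so its smallest eigenvalue is at least $\alpha^2$—but the direct computation above is the cleanest route.
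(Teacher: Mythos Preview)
Your proof is correct and rests on the same key fact as the paper's, namely $\langle\vC,\cA\vC\rangle=0$ from the antisymmetry of $\cA$. The paper obtains the lower bound by applying Cauchy--Schwarz to $\pm\alpha\|\vC\|^2 = \pm\langle\vC,(\alpha+\cA)\vC\rangle \leq \|\vC\|\,\|(\alpha+\cA)\vC\|$ rather than expanding the squared norm, but the two arguments are equivalent one-liners; your Pythagorean version even yields the slightly sharper identity $\|(\alpha+\cA)\vC\|^2 = \alpha^2\|\vC\|^2 + \|\cA\vC\|^2$.
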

\begin{proof}
  The second inequality is just the definition of the operator norm
  $\|\alpha+\cA\|$.  The first inequality follows from the fact that
  $\cA$ is antisymmetric.  By Cauchy's inequality and $(\vC,
  \cA\vC)=0$ one has
  \[
  \pm\alpha \|\vC\|^2 = \pm\bigl\langle\vC, (\alpha+\cA)\vC\bigr\rangle \leq
  \|\vC\|\;\|(\alpha+\cA)\vC\|,
  \]
  which implies the Lemma.
\end{proof}

\begin{lemma}
  If $K$ is large enough then for any solution $(\vC(s), \vT(s))$ to
  (\ref{eq:soliton-profile-as-system}) with $\|\va(0)\|\geq 2K$ one
  has
  \begin{itemize}
  \item either $\bigl(\vC(0), \vT(0)\bigr) \in \cR_+(K)\cup \cR_-(K)$,
  \item or there exist $s_-<0<s_+$ such that $(\vC(s_+),
    \vT(s_+))\in\cR_+(K)$ and $(\vC(s_-), \vT(s_-))\in\cR_-(K)$.
    Moreover, $s_\pm$ are bounded by
    \[
    |s_\pm| \leq C
    \|\va(0)\|^{-1}\ln\|\va(0)\|.
    \]
  \end{itemize}
\end{lemma}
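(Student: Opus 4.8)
The plan is to deduce everything from Lemma~\ref{lem:GR-here}, exploiting the fact that the part of the region $\{\|\va\|\geq K\}$ lying outside $\cR_+(K)\cup\cR_-(K)$ is precisely a high-curvature window to which that lemma applies. Write $A_0=\|\va(0)\|\geq 2K$, let $C$ be the largest of the constants produced by Lemmas~\ref{lem:diff-ineqs-for-mu-nu} and~\ref{lem:GR-here}, and fix $K\geq 32C$ at the end.

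First I would dispose of the first bullet. If $\nu(0)\leq K$ then $1-\mu(0)^2=\nu(0)/A_0^4\leq K/(2K)^4<1$, so $\mu(0)\neq 0$; combined with $\|\va(0)\|=A_0\geq K$ this shows $(\vC(0),\vT(0))\in\cR_+(K)$ when $\mu(0)>0$ and $(\vC(0),\vT(0))\in\cR_-(K)$ when $\mu(0)<0$. Thus I may assume $\nu(0)>K$, i.e.\ $1-\mu(0)^2>KA_0^{-4}$. Since $K\geq 2C$ this gives $1-\mu(0)^2>2CA_0^{-4}>2CA_0^{-4}-C^2A_0^{-8}$, which is exactly the hypothesis $|\mu(0)|\leq 1-CA_0^{-4}$ of Lemma~\ref{lem:GR-here}; its other hypothesis $A_0\geq C$ is immediate. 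That lemma then produces $s_-\leq 0\leq s_+$ with $\mu(s_\pm)=\pm(1-CA_0^{-4})$ and $s_+-s_-\leq (C/A_0)\ln A_0$. On the arc one has $\mu_s\geq\tfrac12\|\va\|(1-\mu^2)>0$ by (\ref{eq:mu-growth-lowerbound}), so $\mu$ is strictly increasing; since $|\mu(0)|<1-CA_0^{-4}$ strictly, this forces $s_-<0<s_+$, and $\mu(s_+)>0>\mu(s_-)$.

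Next I would check that the endpoints land in the correct regions. At $s_+$ one has $1-\mu(s_+)^2=CA_0^{-4}(2-CA_0^{-4})\leq 2CA_0^{-4}$, and the bound $\|\va\|\leq 2A_0$ along the arc (from the proof of Lemma~\ref{lem:GR-here}) gives $\nu(s_+)=\|\va(s_+)\|^4(1-\mu(s_+)^2)\leq(2A_0)^4\cdot 2CA_0^{-4}=32C\leq K$. The same proof supplies $\|\va(s_+)\|\geq\tfrac12 A_0\geq K$, and together with $\mu(s_+)>0$ this yields $(\vC(s_+),\vT(s_+))\in\cR_+(K)$; the mirror-image computation at $s_-$ gives $(\vC(s_-),\vT(s_-))\in\cR_-(K)$. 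Finally $|s_\pm|\leq s_+-s_-\leq (C/A_0)\ln A_0=C\|\va(0)\|^{-1}\ln\|\va(0)\|$, which is the claimed bound.

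The hard part is purely the bookkeeping of constants: one must choose a single $K$ that simultaneously pushes every point with $\nu>K$ into the window $|\mu|\leq 1-CA_0^{-4}$ where Lemma~\ref{lem:GR-here} is valid, and guarantees that the endpoints it returns, sitting at $\mu=\pm(1-CA_0^{-4})$, fall back inside $\{\nu\leq K\}$. These two thresholds involve different powers of $A_0$, and they nest correctly only because $\|\va\|$ stays within a factor $2$ of $A_0$ along the short arc, so that the factor $\|\va\|^4$ in $\nu$ cannot defeat the smallness $1-\mu^2=\cO(A_0^{-4})$ at the endpoints.
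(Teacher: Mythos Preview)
Your proof is correct and follows exactly the route the paper intends: the paper's own proof consists of the single sentence ``This is a reformulation of Lemma~\ref{lem:GR-here},'' and you have carefully supplied the bookkeeping that makes this reformulation precise. In particular, your choice $K\geq 32C$ to ensure that the endpoints $\mu(s_\pm)=\pm(1-CA_0^{-4})$ satisfy $\nu(s_\pm)\leq K$, and your use of strict monotonicity of $\mu$ along the arc to upgrade $s_-\leq 0\leq s_+$ to $s_-<0<s_+$, are exactly the details the paper is suppressing.
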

\begin{proof}
  This is a reformulation of Lemma~\ref{lem:GR-here}.
\end{proof}

\begin{figure}
  \centering
  \includegraphics[width=\textwidth]{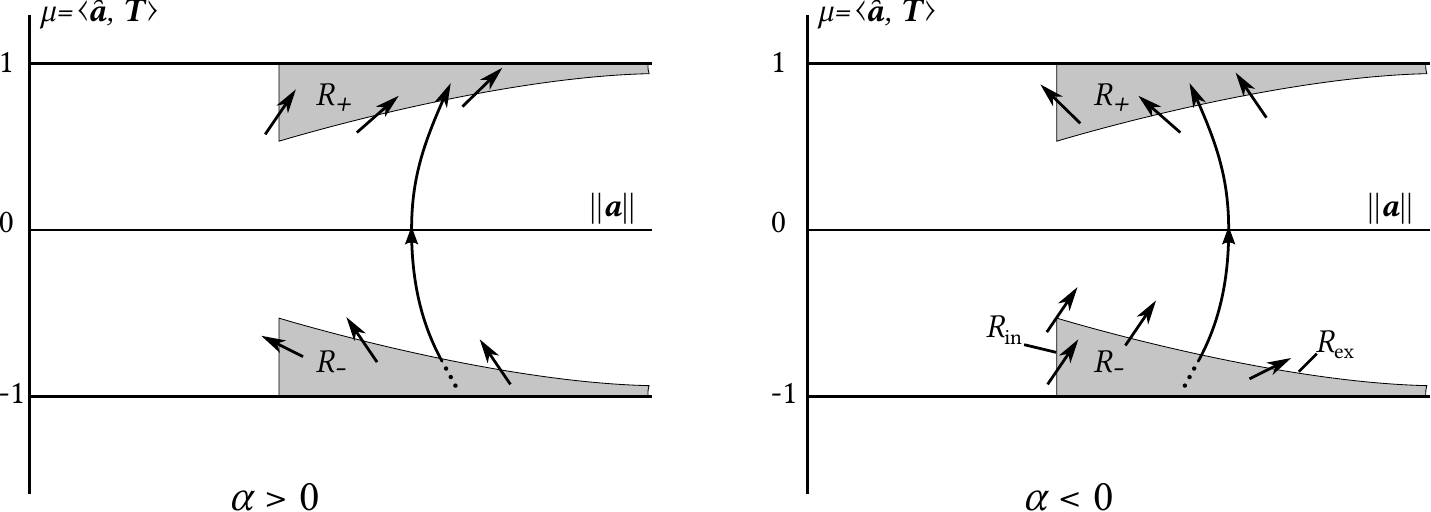}
  \caption{A caricature of the soliton flow
  (\ref{eq:soliton-profile-as-system}) in the cases $\alpha>0$ and
  $\alpha<0$.  }
\end{figure}

\begin{lemma} \label{lem:alpha-pos-R-invariant} If $\alpha>0$ then for
  sufficiently large $K$ the region $\cR_+(K)$ is forward invariant
  and the region $\cR_-(K)$ is backward invariant under the soliton
  flow (\ref{eq:soliton-profile-as-system}) on $\R^n\times \Sph^{n-1}$.

  If $\bigl(\vC(s), \vT(s)\bigr) \in \cR_+(C)$ for all $s\geq 0$, then
  \begin{equation}
    \vC = e^{\sigma(\alpha+\cA)}\vGa + \cO(e^{-\alpha\sigma})
    \label{eq:asymptotic-log-spiral}
  \end{equation}
  for some constant vector $\vGa\neq0$, provided $\sigma$ is defined
  as in~(\ref{eq:sigma-def}).

  If $\alpha<0$, then there is a $K<\infty$ such that any solution
  $\bigl(\vC(s), \vT(s)\bigr)$ of~(\ref{eq:soliton-profile-as-system}) that
  remains in $\cR_-(K)$ for all $s\geq 0$ satisfies
  \begin{equation}
    \vC = e^{-\sigma(\alpha+\cA)}\vGa + \cO(e^{\alpha\sigma})
    \label{eq:asymptotic-log-spiral-alpha-neg}
  \end{equation}
  for some constant vector $\vGa\neq0$, again, provided $\sigma$ is defined
  as in~(\ref{eq:sigma-def}).
\end{lemma}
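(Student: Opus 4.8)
The plan is to treat the invariance claim and the two asymptotic formulas separately, using the differential inequalities of Lemma~\ref{lem:diff-ineqs-for-mu-nu} together with the growth estimates of Lemmas~\ref{lem:growth-of-a} and~\ref{lem:a-monotone}, and finally a variation-of-constants argument in the $\sigma$ variable.

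\emph{Invariance.} First I would record the geometric fact that inside the strip $\{\|\va\|\geq K,\ \nu\leq K\}$ the cosine $\mu$ is pinned near $\pm1$: since $1-\mu^2=\nu\|\va\|^{-4}\leq K^{-3}$, one has $|\mu|\geq\sqrt{1-K^{-3}}$, so for large $K$ this strip is the disjoint union of $\cR_+(K)$ (where $\mu$ is near $+1$) and $\cR_-(K)$ (where $\mu$ is near $-1$). A trajectory confined to the strip cannot switch the sign of $\mu$, so to prove forward invariance of $\cR_+(K)$ for $\alpha>0$ it suffices to check the two faces $\|\va\|=K$ and $\nu=K$. On $\{\nu\leq K\}$ we have $1-\mu\leq\nu\|\va\|^{-4}\leq K\|\va\|^{-4}$, so Lemma~\ref{lem:growth-of-a} gives $d\|\va\|/ds\geq\alpha-C\|\va\|^{-2}\geq\tfrac12\alpha>0$, whence $\|\va\|$ increases and cannot fall to $K$. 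On the face $\nu=K$ the inequality~(\ref{eq:nu-evolution-bound-mupos}) gives $\|\va\|^{-1}\nu_s\leq-2\mu K+C\mu\sqrt K+CK\|\va\|^{-2}$, whose right-hand side is negative once $K$ is large (there $\mu\geq\tfrac12$), so $\nu$ decreases and the orbit stays in $\{\nu\leq K\}$. Backward invariance of $\cR_-(K)$ follows by running $s$ backward: for $\mu$ near $-1$ and $\alpha>0$, Lemma~\ref{lem:a-monotone} makes $\|\va\|$ decreasing in $s$, hence increasing in $-s$, while~(\ref{eq:nu-evolution-bound-muneg}) shows $\nu_s>0$ on $\nu=K$ since $-2\mu K>0$, so $\nu$ decreases as $s$ decreases.

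\emph{Reparametrization and growth of $\|\va\|$.} For the asymptotics I would pass to the variable $\sigma$ of~(\ref{eq:sigma-def}), in which $d\vC/d\sigma=\|\va\|\vT$. Writing $\vT=\pm\ah+(\vT\mp\ah)$ and using $\|\vT\mp\ah\|^2=2(1\mp\mu)$ from~(\ref{eq:ah-pm-vT-and-mu}) together with $1\mp\mu\leq1-\mu^2=\nu\|\va\|^{-4}\leq C\|\va\|^{-4}$, I obtain in $\cR_+(C)$
\[
\frac{d\vC}{d\sigma}=(\alpha+\cA)\vC+E(\sigma),\qquad \|E(\sigma)\|\leq C\|\va\|^{-1},
\]
and the same with $-(\alpha+\cA)$ in $\cR_-(C)$. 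It then remains to control $\|\va\|$ as a function of $\sigma$. From Lemma~\ref{lem:growth-of-a}, $d\|\va\|/ds=\alpha+\cO(\|\va\|^{-2})$ in $\cR_+$; since $\|\va\|$ is increasing to infinity the error integrates to a constant, giving $\|\va\|(s)=\alpha s+\cO(1)$, and substituting into $\sigma=\int ds/\|\va\|$ yields $s\sim c\,e^{\alpha\sigma}$, hence $\|\va\|=(\gamma+o(1))e^{\alpha\sigma}$ with $\gamma>0$. In particular $\sigma\to\infty$ and $\|E(\sigma)\|\leq Ce^{-\alpha\sigma}$. The case $\alpha<0$ in $\cR_-$ is identical with $\alpha$ replaced by $-\alpha$, giving $\|\va\|\sim e^{-\alpha\sigma}$ and $\|E(\sigma)\|\leq Ce^{\alpha\sigma}$.

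\emph{Variation of constants.} Finally I would solve the perturbed linear equation by Duhamel's formula and factor out the exponential:
\[
\vC(\sigma)=e^{\sigma(\alpha+\cA)}\Bigl[\vC(0)+\int_0^\sigma e^{-\sigma'(\alpha+\cA)}E(\sigma')\,d\sigma'\Bigr].
\]
Because $\cA$ is antisymmetric, $\|e^{-\sigma'(\alpha+\cA)}\|=e^{-\alpha\sigma'}$, so the integrand is bounded by $Ce^{-2\alpha\sigma'}$, which is integrable on $[0,\infty)$. Setting $\vGa=\vC(0)+\int_0^\infty e^{-\sigma'(\alpha+\cA)}E(\sigma')\,d\sigma'$ and estimating the tail $\int_\sigma^\infty$ gives $\vC(\sigma)=e^{\sigma(\alpha+\cA)}\vGa+\cO(e^{-\alpha\sigma})$, i.e.~(\ref{eq:asymptotic-log-spiral}); the $\alpha<0$ computation produces~(\ref{eq:asymptotic-log-spiral-alpha-neg}) verbatim. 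Since $\|\vC\|\to\infty$ (by Lemma~\ref{lem:a-C-equivalent} and $\|\va\|\to\infty$) while the remainder decays, $\vGa$ cannot vanish. I expect the main obstacle to be the invariance step: one must verify that the competition between the gain $-2\mu\nu$ and the error terms $C\mu\sqrt\nu$ and $C\|\va\|^{-2}\nu$ in~(\ref{eq:nu-evolution-bound-mupos})--(\ref{eq:nu-evolution-bound-muneg}) genuinely traps $\nu$ below $K$ on every boundary face. Once this and the growth rate $\|\va\|\sim e^{\pm\alpha\sigma}$ are established, the Duhamel estimate is routine.
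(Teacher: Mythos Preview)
Your proposal is correct and follows essentially the same route as the paper: the invariance is checked on the two boundary faces $\|\va\|=K$ and $\nu=K$ via Lemmas~\ref{lem:diff-ineqs-for-mu-nu}--\ref{lem:a-monotone}, and the asymptotic formula is obtained by a Duhamel/variation-of-constants argument in the $\sigma$ variable. The only organizational difference is that you establish $\|\va\|\sim e^{\pm\alpha\sigma}$ directly (by integrating $d\|\va\|/ds=\pm\alpha+\cO(\|\va\|^{-2})$) and then feed this into the error bound, whereas the paper first shows the error is merely bounded, deduces that $\vGa$ exists and is nonzero, and only then bootstraps to the sharp $\cO(e^{\mp\alpha\sigma})$ remainder; your route is slightly more direct but uses the same ingredients.
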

\begin{proof}
  In the region $\cR_+(K)$ we have $\|\va\| \geq K$, and hence, by
  Lemma~\ref{lem:a-C-equivalent}, $\|\vC\|\geq CK$ in $\cR_+(K)$.

  We also have $\nu\leq K$ in $\cR_+(K)$.  By definition of $\nu$ this implies
  $\mu \geq 1- CK/\|\va\|^4$.  According to Lemma~\ref{lem:a-monotone} we find
  that for sufficiently large $K$ one has
  \begin{equation}
    \frac{d\|\va\|}{ds} \geq \frac{\alpha}{2}
    \label{eq:a-keeps-growing}
  \end{equation}
  in $\cR_+(K)$.  Thus the only way that a solution could escape from $\cR_+(K)$
  is by violating the inequality $\nu\leq K$.  However, in $\cR_+(K)$ we have,
  by Lemma~\ref{lem:diff-ineqs-for-mu-nu},
  \begin{eqnarray*}
    \|\va\|^{-1} \frac{d\nu}{ds} &\leq&
    \bigl(-2\mu + C\|\va\|^{-2}\bigr)\nu + C\sqrt\nu\\
    &\leq& \bigl(-2+ C\|\va\|^{-4} + C\|\va\|^{-2}\bigr)\nu + C\sqrt\nu
  \end{eqnarray*}
  If at any point on a solution in $\cR_+(K)$ one has $\nu=K$, then one also has
  \[
  \|\va\|^{-1} \frac{d\nu}{ds} \leq \bigl(-2+CK^{-2}\bigr)K + C\sqrt K.
  \]
  If $K$ is large enough this implies $\frac{d\nu}{ds} < 0$ when $\nu=K$.
  Therefore $\cR_+(K)$ is forward invariant when $K$ is sufficiently large.

  The exact same arguments show that $\cR_-(K)$ is backward invariant,
  especially if one observes that the transformation $(\vC, \vT) \mapsto
  (\vC, -\vT)$ maps $\cR_+(K)$ to $\cR_-(K)$ and  reverses the orientation
  of the soliton flow.  Therefore forward invariance of $\cR_+(K)$ is
  equivalent with backward invariance of $\cR_-(K)$.

  We now turn to the proof of (\ref{eq:asymptotic-log-spiral}).  If a soliton
  satisfies $\bigl(\vC(s), \vT(s)\bigr) \in \cR_+(K)$ for all $s\geq 0$, then we
  have $\nu\leq K$ and hence $\mu \geq 1-C\|\va\|^{-4}$ for all $s\geq 0$.  We
  are assuming that $\alpha>0$, so we have
  \[
  \|\va\| = \|(\alpha + \cA)\vC\| \geq \alpha \|\vC\|,
  \]
  and therefore $\mu \geq 1- C\|\vC\|^{-4}$.  Since $\|\ah-\vT\|^2 = 2(1-\mu)$
  we also have
  \begin{equation}
    \|\vT - \ah \| \leq C\|\vC\|^{-2}.
    \label{eq:T-a-deviation-bound}  
  \end{equation}
  Recall that $\ah = \va/\|\va\|$ and $\va = (\alpha+\cA)\vC$ (since $\alpha>0$
  we have $\vv=0$).  We get
  \[
  \left\| \|(\alpha+\cA)\vC\| \frac{d\vC} {ds} - (\alpha+\cA)\vC \right\| \leq
  \|(\alpha+\cA)\vC\| \frac{C}{ \|\vC\|^{2}} \leq \frac{C}{\|\vC\|}
  \]
  Defining $\sigma$ as in (\ref{eq:sigma-def}), we arrive at
  \begin{equation}
    \frac{d\vC} {d\sigma} - (\alpha+\cA)\vC = \vg(\sigma),
    \mbox{ where }
    \|\vg(\sigma)\| \leq \frac{C} {\|\vC\|}.
    \label{eq:spiral-diffeq-for-vC}  
  \end{equation}
  In particular, $\|\vg(\sigma)\|$ is bounded since $\|\vC\|$ is bounded away
  from $0$ in $\cR_+(K)$.

  Multiply (\ref{eq:spiral-diffeq-for-vC}) with the integrating factor
  $e^{\sigma(\alpha+\cA)}$ and integrate:
  \begin{equation}
    e^{-\sigma_*(\alpha+\cA)}\vC(\sigma_*)
    =
    e^{-\sigma(\alpha+\cA)}\vC(\sigma)
    +
    \int_\sigma^{\sigma_*}
    e^{-\sigma'(\alpha+\cA)}\vg(\sigma') \;d\sigma'
    \label{eq:spiral-diffeq-integrated}
  \end{equation}
  Since $\cA$ is antisymmetric $e^{\sigma\cA}$ is orthogonal and
  $\|e^{\sigma(\alpha+\cA)}\| = e^{\sigma\alpha}$. Therefore boundedness of
  $\vg(\sigma)$ implies that the integrand in
  (\ref{eq:spiral-diffeq-integrated}) decays exponentially.  This allows us
  to take the limit $\sigma_*\to\infty$ and conclude that
  \begin{equation}
    \vGa = \lim_{\sigma_*\to\infty} e^{-\sigma_*(\alpha+\cA)}\vC(\sigma_*)
    \label{eq:Gamm-def}
  \end{equation}
  exists.  Moreover, we have
  \[
  \vGa = e^{-\sigma(\alpha+\cA)}\vC(\sigma) + \int_\sigma^{\infty}
  e^{-\sigma'(\alpha+\cA)}\vg(\sigma') \;d\sigma'
  \]
  and therefore
  \begin{equation}
    \vC(\sigma) =
    e^{\sigma(\alpha+\cA)} \vGa
    - \int_\sigma^\infty  e^{-(\sigma'-\sigma)(\alpha+\cA)} \vg(\sigma')\;d\sigma'.
    \label{eq:spiral-asymptotics}
  \end{equation}
  This last integral can be estimated by
  \begin{eqnarray*}
    \left\|
    \int_\sigma^\infty e^{-(\sigma'-\sigma)(\alpha+\cA)} \vg(\sigma')\;d\sigma'
    \right\|
    &\leq& \sup_{\sigma'\geq \sigma}\|\vg(\sigma')\|
    \int_{\sigma}^\infty e^{-\alpha(\sigma'-\sigma)}d\sigma' \\
    &=& \frac{1}{\alpha}\sup_{\sigma'\geq\sigma}\|\vg(\sigma')\|.
  \end{eqnarray*}
  In particular, the integral is bounded.  It follows that $\vGa\neq0$, for if
  $\vGa$ were to vanish then (\ref{eq:spiral-asymptotics}) would imply that
  $\vC(\sigma)$ is bounded for all $\sigma\geq0$.  This is impossible when
  $\alpha>0$ because (\ref{eq:a-keeps-growing}) says that $\|\va\|$ must keep
  growing as long as the solution stays in $\cR_+(K)$.

  Having established (\ref{eq:spiral-asymptotics}) with $\vGa\neq0$ we can
  conclude that $\|\vC\| \geq C e^{\alpha\sigma}$.  Combined with $\|\vg\|\leq
  C/\|\vC\|$ this implies $\|\vg(\sigma)\| \leq Ce^{-\alpha\sigma}$.  Our
  estimate for the integral in (\ref{eq:spiral-asymptotics}) then implies that
  this integral is $\cO(e^{-\alpha\sigma})$, as claimed in
  (\ref{eq:asymptotic-log-spiral}).

  To prove (\ref{eq:asymptotic-log-spiral-alpha-neg}) one can use very similar
  arguments.  Since $\bigl(\vC(s), \vT(s)\bigr) \in \cR_-(K)$ for
  all $s>0$ we have $\mu < -1+C\|\vC\|^{-4}$, and hence $ \|\ah + \vT\| \leq
  C\|\vC\|^{-2} $.   From there one concludes that 
  \[
  \frac{d\vC}{d\sigma} = -(\alpha+\cA)\vC + \cO(\|\vC\|^{-1}),
  \]
  which then leads to (\ref{eq:asymptotic-log-spiral-alpha-neg}).
\end{proof}

When $\alpha<0$ the regions $\cR_\pm(K)$ are no longer invariant for the soliton
flow in either forward or backward direction, and most orbits that enter
$\cR_-(K)$ will leave $\cR_-(K)$ after a while.  However, there do exist
solutions of the soliton flow that, if you follow them in the direction of
$\vT$, remain in $\cR_-(K)$ forever.  We have just shown in
Lemma~\ref{lem:alpha-pos-R-invariant} that such orbits are again asymptotic to
logarithmic spirals.  The following Lemma constructs these special solitons via
a topological ``shooting method.''

\begin{lemma}
  \label{lem:alpha-neg-R-isolating-block}
  Let $\alpha<0$.
  If $\|\va_0\|\geq K$ then there is a $\vT_0\in \Sph^{n-1}$ with
  $\|\vT_0 + \ah(\vC_0)\|\leq CK\|\va_0\|^{-4}$ such that the solution
  $(\vC(s), \vT(s))$ of (\ref{eq:soliton-profile-as-system}) with
  $(\vC(0), \vT(0)) = (\vC_0, \vT_0)$ remains in $\cR_-(K)$ for all
  $s\geq 0$.
\end{lemma}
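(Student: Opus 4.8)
\section*{Proof proposal}

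The plan is to treat $\cR_-(K)$ (for $\alpha<0$ and $K$ large) as an isolating block for the forward soliton flow~(\ref{eq:soliton-profile-as-system}) and to produce a trapped orbit by a Wazewski/degree argument. First I would determine which faces of $\cR_-(K)$ the flow can leave through. Throughout $\overline{\cR_-(K)}$ one has $1-\mu^2=\nu/\|\va\|^4\le K^{-3}$, so with $\mu\le 0$ we get $\mu\le-\sqrt{1-K^{-3}}$; thus $\mu$ is bounded away from $0$ and the face $\{\mu=0\}$ does not meet $\overline{\cR_-(K)}$ for large $K$. Since $\mu$ is close to $-1$ and $\alpha<0$, Lemma~\ref{lem:a-monotone} gives $d\|\va\|/ds>0$ in the block, so $\{\|\va\|=K\}$ is an entrance face and cannot be an exit. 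The only escape is through $\{\nu=K\}$, and there the lower bound~(\ref{eq:nu-evolution-bound-muneg}) of Lemma~\ref{lem:diff-ineqs-for-mu-nu} shows $\nu_s>0$: its leading term $-2\mu\nu=2|\mu|K$ dominates the correction $C\|\va\|^{-2}\nu\le CK^{-1}$, so the flow crosses $\{\nu=K\}$ strictly outward. Consequently the exit set $b^-=\overline{\cR_-(K)}\cap\{\nu=K\}$ is closed and transverse to the flow, $\cR_-(K)$ is a genuine isolating block, and the exit time depends continuously on initial data.

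Next I would set up the shooting. Fix $\vC_0$ with $\|\va_0\|\ge K$ and put $\ah_0=\ah(\vC_0)$. For large $K$ the admissible directions $\{\vT_0\in\Sph^{n-1}:(\vC_0,\vT_0)\in\overline{\cR_-(K)}\}$ form a spherical cap $D$ about $-\ah_0$ (cut out by $\mu_0\le0$, $\nu_0\le K$): a closed topological disk $\B^{n-1}$ whose center is $\vT_0=-\ah_0$ and whose boundary $\partial D\cong\Sph^{n-2}$ lies exactly on the exit face $\{\nu_0=K\}$. Assume, for contradiction, that every $\vT_0\in D$ gives an orbit leaving $\cR_-(K)$. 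Then the exit time $\tau(\vT_0)\in[0,\infty)$ is continuous with $\tau\equiv0$ on $\partial D$, and at the exit point the deviation $\vT(\tau)+\ah(\vC(\tau))$ is nonzero and nearly orthogonal to $\ah(\vC(\tau))$. Normalizing it and transporting it continuously back along the orbit into the tangent space at $-\ah_0$ — possible because $\vT$ stays near $-\ah(\vC)$ while $\ah(\vC)$ varies continuously — produces a continuous map $d:D\to\Sph^{n-2}$. On $\partial D$, where $\tau=0$, the map is $d(\vT_0)=(\vT_0+\ah_0)/\|\vT_0+\ah_0\|$, which agrees to leading order with the defining identification $\partial D\cong\Sph^{n-2}$ and is therefore homotopic to the identity. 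Hence $d$ would be a retraction $\B^{n-1}\to\Sph^{n-2}$, which is impossible. So some $\vT_0^\ast\in\mathrm{int}\,D$ yields an orbit that stays in $\cR_-(K)$ for all $s\ge0$.

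Finally I would locate $\vT_0^\ast$ quantitatively. The trapped orbit satisfies $\nu(s)\le K$ for all $s$, hence $\nu_0\le K$, which already confines $\vT_0^\ast$ to a small cap about $-\ah_0$; writing $\vT_0^\ast=-\ah_0\cos\phi+\vw\sin\phi$ with $\vw\perp\ah_0$ a unit vector one has $\nu_0\approx\|\va_0\|^4\phi^2$. Combining $\nu_0\le K$ with the differential inequalities of Lemma~\ref{lem:diff-ineqs-for-mu-nu} near the relative equilibrium $\vT=-\ah$ (where $\nu_s$ is no longer forced positive) pins $\phi$ down and gives the stated bound $\|\vT_0^\ast+\ah_0\|\le CK\|\va_0\|^{-4}$. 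That the resulting orbit is globally asymptotic to a generalized logarithmic spiral is then exactly the $\alpha<0$ conclusion~(\ref{eq:asymptotic-log-spiral-alpha-neg}) of Lemma~\ref{lem:alpha-pos-R-invariant}.

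I expect the main obstacle to be the topological step rather than the estimates: one must verify that $\cR_-(K)$ is an honest isolating block — closed and transverse exit set, continuous exit time, and no orbits grazing $\{\nu=K\}$ — despite the saddle-type behavior near $\{\nu=0\}$, where $\nu_s$ changes sign and most orbits peel off while a thin stable set survives. The delicate point is that the transverse spheres $\Sph^{n-2}$ at the (moving) exit points must be identified with the fixed $\Sph^{n-2}$ at $-\ah_0$ in a way that is uniform over $D$ and that keeps the boundary map essential (degree one); the boundary, growth, and curvature estimates needed to control this identification are supplied by Lemmas~\ref{lem:diff-ineqs-for-mu-nu}, \ref{lem:growth-of-a}, and \ref{lem:a-monotone}.
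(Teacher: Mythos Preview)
Your approach is essentially the paper's: both treat $\cR_-(K)$ as an isolating block, verify that $\{\|\va\|=K\}$ is an entrance face (via Lemma~\ref{lem:a-monotone}) and that $\{\nu=K\}$ is a transverse exit face (via Lemma~\ref{lem:diff-ineqs-for-mu-nu}), and then use a topological obstruction on the initial cap $D\cong\B^{n-1}$ of admissible tangents to conclude that not every orbit can exit.

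The difference is entirely in the topological step. The paper does \emph{not} attempt to map back to a fixed $\Sph^{n-2}$. Instead it observes that the exit set $\Rex$ is homotopy equivalent to the unit tangent bundle $T_1\Sph^{n-1}$ via $(\vC,\vT)\mapsto(\ah,\pro_{\ah}\vT/\|\pro_{\ah}\vT\|)$, and that under this equivalence $\partial D$ maps to a fiber. If the exit map were defined on all of $D$ it would contract that fiber inside $T_1\Sph^{n-1}$; the paper then invokes the long exact sequence of the fibration $\Sph^{n-2}\to T_1\Sph^{n-1}\to\Sph^{n-1}$ to show the fiber class is nontrivial in $\pi_{n-2}(T_1\Sph^{n-1})$ (isomorphic to $\Z$ or $\Z_2$ according to the parity of $n$). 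Your route---normalize the exit deviation and parallel-transport it along the orbit's $\ah$-path back to $\ah_0^\perp$ to manufacture a retraction $\B^{n-1}\to\Sph^{n-2}$---is more elementary in that it appeals only to Brouwer, but it buys that simplicity at the cost of the transport construction you flag as the ``main obstacle.'' The paper's formulation sidesteps that issue entirely: by working in $T_1\Sph^{n-1}$ it never needs to identify fibers over different base points, so no uniform trivialization and no control on how far $\ah(\vC(\tau))$ drifts from $\ah_0$ is required. Your version is viable (parallel transport along each orbit is canonical and depends continuously on $\vT_0$ once $\tau$ is continuous on the compact $D$), but the paper's is cleaner and more robust.

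One small point: the bound $\|\vT_0^\ast+\ah_0\|\le CK\|\va_0\|^{-4}$ does not need the extra analysis you sketch. Membership in $D$ already gives $1-\mu_0^2\le K\|\va_0\|^{-4}$, hence $1+\mu_0\le K\|\va_0\|^{-4}$ since $1-\mu_0\ge1$, and $\|\vT_0+\ah_0\|^2=2(1+\mu_0)$; the paper simply reads the bound off the definition of the cap.
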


\begin{proof}
  The map $\phi : (\vC, \vT)\mapsto (\va, \vT) = ((\alpha+\cA)\vC,
  \vT)$ is a homeomorphism of the set $\cR_-(K)$ with
  \[
  \phi \bigl[\cR_-(K)\bigr] = \bigl\{(\va, \vT) : \|\va\| \geq K,\;
  \bigl\langle \ah, \vT \bigr\rangle \leq
  -\sqrt{1-K\|\va\|^{-4}}\bigr\}.
  \]
  The boundary of $\cR_-$ consists of two parts,
  \[
  \Rin = \phi^{-1}\bigl\{(\va, \vT) : \|\va\| = K,\; \bigl\langle \ah,
  \vT \bigr\rangle \leq -\sqrt{1-K\|\va\|^{-4}}\bigr\}
  \]
  and
  \[
  \Rex = \phi^{-1}\bigl\{(\va, \vT) : \|\va\| \geq K,\; \bigl\langle
  \ah, \vT \bigr\rangle = -\sqrt{1-K\|\va\|^{-4}}\bigr\}
  \]

  As above one concludes that within $\cR_-(K)$ one has
  \[
  \frac{d} {ds}\|\va\| \geq -\alpha/2 >0.
  \]
  Therefore any solution that starts in $\cR_-(K)$ can never reach
  $\Rin$ again, since $\|\va\| = k$ on $\Rin$; if a solution starts in
  $\cR_-(K)$ and then exits $\cR_-(K)$, it must hit $\Rex$.

  The same arguments which used Lemma~\ref{lem:diff-ineqs-for-mu-nu}
  to establish forward invariance of $\cR_+(K)$ when $\alpha>0$ show
  in the case $\alpha<0$ that, when a solution within $\cR_-(K)$
  reaches $\nu=K$, one has $\nu_s>0$.  Hence solutions that start in
  and then exit $\cR_-(K)$ cross $\Rex$ transversally when they exit.
  It follows that for those initial data $(\vC_0, \vT_0) \in \cR_-(K)$
  whose orbit $\{(\vC(s), \vT(s)) : s\geq 0\}$ under the soliton flow
  eventually exits $\cR_-(K)$, the point $E(\vC_0, \vT_0) \in \Rex$ at
  which they exit is a continuous (and even smooth) function of the
  initial point $(\vC_0, \vT_0)$.  We call the map $E:(\vC_0,
  \vT_0)\mapsto E(\vC_0, \vT_0)$ the \emph{exit map.}

  We now consider the topology of the exit set $\Rex$.

  For any fixed $\va$ with $\|\va\|\geq K$ the set of unit vectors
  $\vT\in \Sph^{n-1}\subset\R^n$ that satisfy
  \[
  \bigl\langle \ah, \vT \bigr\rangle = -\sqrt{1-K\|\va\|^{-4}}
  \]
  is an $(n-2)$-dimensional sphere in a hyperplane in $\R^n$
  perpendicular to $\va$.  From this one sees that the map
  \begin{equation}
    (\vC, \vT) \mapsto
    \Bigl(\ah, \frac{\pro_{\ah}\vT} {\|\pro_\ah\vT\|}\Bigr)
    \label{eq:homotopy-equivalence}
  \end{equation}
  is a homotopy equivalence of $\cR_-(K)$ with the unit tangent bundle
  $T_1\Sph^{n-1}$ of $\Sph^{n-1}$.  Under this homotopy equivalence the set
  of initial data $(\vC_0, \vT_0)$ with prescribed $\vC_0$ maps to the fiber
  over $\ah_0$.

  It is a fact from homotopy theory that the fiber in the unit tangent bundle
  $T_1\Sph^{n-1}\to \Sph^{n-1}$ is not contractible in $T_1\Sph^{n-1}$.
  Indeed, the fiber bundle $\Sph^{n-2} \to T_1\Sph^{n-1} \to
  \Sph^{n-1}$ leads to a long exact sequence of homotopy groups
  \[\fl
  \cdots\to \pi_{n-1}(\Sph^{n-1}) \stackrel{\partial_*}\longrightarrow
  \pi_{n-2}(\Sph^{n-2}) \longrightarrow
  \pi_{n-2} (T_1\Sph^{n-1}) \longrightarrow
  \pi_{n-2}(\Sph^{n-1}) \to\cdots
  \]
  %\[
  %  \begin{array}{rl}
  %    \cdots\to \pi_{n-1}(\Sph^{n-1}) &\stackrel{\partial_*}\longrightarrow
  %    \pi_{n-2}(\Sph^{n-2}) \to \\
  %    & \pi_{n-2} (T_1\Sph^{n-1}) \to
  %    \pi_{n-2}(\Sph^{n-1}) \to\cdots
  %  \end{array}
  %\]
  Since $\pi_{n-2}(\Sph^{n-1})$ is trivial $\pi_{n-2} (T_1\Sph^{n-1})$ is
  isomorphic to $\pi_{n-2}(\Sph^{n-2}) / \partial_*\pi_{n-1}(\Sph^{n-1})
  $.  Both $\pi_{n-2}(\Sph^{n-2}) $ and $\pi_{n-1}(\Sph^{n-1}) $ are
  isomorphic to $\Z$, and by explicit computation one finds that
  $\partial_*$ vanishes when $n$ is odd, and acts by multiplication
  with 2 when $n$ is even.  Thus for even $n$ we find that
  $\pi_{n-2} (T_1\Sph^{n-1}) \simeq \Z_2$, while for odd $n$ we have $
  \pi_{n-2} (T_1\Sph^{n-1})\simeq \Z$.
  In either case, the fiber $\Sph^{n-2} $ represents a homotopy class
  that generates $\pi_{n-2}(\Sph^{n-2})$, and does not lie in the image
  of $\partial_*$.  Its image in $\pi_{n-2} (T_1\Sph^{n-1}) $ is
  therefore nontrivial, and so the fiber is not contractible in
  $T_1\Sph^{n-1}$.

  To prove the Lemma we consider the given set of initial data: we are
  given one point $\vC_0$, and we let $\vT$ be any unit vector that
  satisfies
  \begin{equation}
    \bigl\langle \ah_0, \vT \bigr\rangle \leq -\sqrt{1-K\|\va_0\|^{-4}} 
    \label{eq:initial-cap}
  \end{equation}
  This set of points forms an $(n-1)$-dimensional ball $\B^{n-1}\subset
  \cR_-(K)$.  If the orbit starting at all possible initial data $\vT_0$ in this
  ball $\B^{n-1}$ were to exit $\cR_-(K)$, then the exit map $E$
  restricted to $\B^{n-1}$ would provide a continuous map $\B^{n-1} \to
  \Rex$.  On the other hand, the points on the boundary of $\B^{n-1}$
  are already in the exit set $\Rex$, so the exit map restricted to
  $\partial \B^{n-1}$ is the identity.  Under the homotopy
  equivalence~(\ref{eq:homotopy-equivalence}) the boundary $\partial
  \B^{n-1}$ maps to a fiber in the unit tangent bundle $T_1 \Sph^{n-2}$.
  The exit map restricted to $\B^{n-1}$ would then provide a
  contraction of this fiber within $T_1\Sph^{n-2}$ to a point, which is
  impossible.  We therefore conclude that for at least one choice of
  $\vT_0$ satisfying (\ref{eq:initial-cap}) the orbit of the soliton
  flow will not leave $\cR_-(K)$.

\end{proof}

\section{Global behavior of Rotating-Dilating Solitons}
\label{sec:global-behaviour}
\subsection{Decomposition of $\R^n$ into complex subspaces determined
by $\cA$}\label{sec:complex-subspaces}
In the previous section we have seen that rotating-dilating solitons can have
ends that are asymptotic to logarithmic spirals.  Here we determine all possible
asymptotic behaviours that an end of a rotating-dilating soliton can have.

There are a number of trivial rotating-dilating solitons, namely, circles with
appropriate radius, and Abresch-Langer curves in case $N(\cA)$ is nontrivial.
We will show that all bounded ends of rotating-dilating solitons must be
asymptotic to these simpler solitons.

It will be convenient to change our notation for the normal form of the
infinitesimal rotation matrix $\cA$.  Recall that in a suitable basis $\cA$ is
of the form (\ref{eq:rotation-normal-form}).  In the most general case several
of the rotation frequencies $\omega_j$ may coincide.  We now denote the set of
\textit{distinct} nonzero eigenvalues of $i\cA$ by $\left\{ \pm\Omega_k : 1\leq
k\leq \bar m \right\}$ with $0 < \Omega_1 < \Omega_2 < \dots < \Omega_{\bar m}$,
and with $\bar m \leq m$.  The matrix $\cA$ can then be written as
\begin{equation}
  \cA =
  \left(
  \begin{array}{cccc}
    \Omega_1\cJ_1 & & 0 &0  \\
    &\ddots & & \vdots \\
    0&&\Omega_{\bar m}\cJ_{\bar m}& 0 \\
    0 & \cdots &0 & 1
  \end{array}
  \right)
  \begin{array}{l}
    \mbox{ with }
    \cJ_k^2 = -\identity_{F_k},\\
    \mbox{ and }
    F_k \stackrel{\rm def}= N(\cA^2 - \Omega_k^2\identity).
  \end{array}
  \label{eq:rotation-normal-form-2}
\end{equation}
The subspaces $F_k$ are invariant under $\cA$.  Restricted to each $F_k$ the
matrix $\cA$ satisfies $\cA^2 = -\Omega_k^2\identity$, so that
\[
\cJ_k \stackrel{\rm def}=\Omega_k^{-1}\cA
\]
defines a complex structure on $F_k$.

\subsection{An almost Lyapunov function for the dilating-rotating
  soliton flow}

The dilating-rotating soliton equation defines a flow on the unit tangent bundle
$\R^n\times \Sph^{n-1}$ of $\R^n$.  When the rotation matrix $\cA$ vanishes, one
can use Huisken's monotonicity formula for Mean Curvature Flow
\cite{Huisken90,shrinkingdonuts} to show that this flow is exactly the geodesic
flow of the metric
\[
(ds)^2 = e^{\frac\alpha2\|\vC\|^2} \|d\vC\|^2
\]
on $\R^n$.  For translating solitons one can use Ilmanen's monotonicity
formula \cite{1994TomEllipticRegularization} instead to conclude that that
self translating solitons are geodesics for the metric
\[
(ds)^2 = e^{\langle\vv, \vC\rangle} \|d\vC\|^2.
\]
Since geodesic flows are Hamiltonian systems one expects a high
degree of recurrence in the flow, and, indeed, all dilating, non-rotating
solitons are either circles or Abresch-Langer curves.

However, as we are about to show, when $\cA\neq0$ the flow becomes dissipative
and most trajectories accumulate on a small number of circles or Abresch-Langer
curves.  Here we will exhibit an ``almost Lyapunov function'' for the flow
(\ref{eq:soliton-profile}) with $\cA\neq0$ and $\vv=0$.  Recall that a Lyapunov
function for a flow is a function which is non-decreasing along all orbits of
the flow, and is in fact strictly increasing except at fixed points of the flow.
A flow that has a Lyapunov function cannot have periodic orbits, and therefore
the soliton equation cannot have a true Lyapunov function, since the
dilating-rotating circles from \S\ref{sec:trivial-rot-dilators} are periodic
orbits for the flow.  The following lemma presents us with a function that is
non-decreasing along orbits of the soliton flow, and that is constant only on
the trivial dilating-rotating solitons from \S\ref{sec:trivial-rot-dilators}.

\begin{lemma}\label{lem:dVdtau}
  If $\cA\neq0$, then along any solution $\bigl(\vC(s), \vT(s)\bigr)$ of the
  soliton flow the quantity
  \[
  V(\vC, \vT ) = e^{\frac\alpha2\|\vC\|^2 + \langle\vv, \vC\rangle}
  \bigl\langle\vT, \cA\vC\bigr\rangle
  \]
  satisfies
  \begin{equation}
    \frac{dV} {d s}
    = e^{\frac\alpha 2\|\vC\|^2 + \langle\vv, \vC\rangle}
    \|\pro_\vT(\cA\vC)\|^2.
    \label{eq:dVdtau}
  \end{equation}
  In particular one has $\frac{dV} {d s} \geq 0$ with equality only if $\cA\vC$
  is a multiple of $\vT $.
\end{lemma}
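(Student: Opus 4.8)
The plan is to write $V = E\,B$ as a product of the scalar weight $E = e^{\frac\alpha2\|\vC\|^2 + \langle\vv,\vC\rangle}$ and the bilinear quantity $B = \langle\vT, \cA\vC\rangle$, and to differentiate along the flow $\vC_s = \vT$, $\vT_s = \pro_\vT\bigl[(\alpha+\cA)\vC+\vv\bigr]$ from (\ref{eq:soliton-profile-as-system}) using the product rule $V_s = E_s B + E B_s$. First I would compute $E_s$: since $\frac{d}{ds}\bigl(\frac\alpha2\|\vC\|^2 + \langle\vv,\vC\rangle\bigr) = \langle\alpha\vC+\vv, \vT\rangle$, we get $E_s = E\,\langle\alpha\vC+\vv,\vT\rangle$. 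The vector $\alpha\vC+\vv$ appearing here is exactly the part of $\va = (\alpha+\cA)\vC+\vv$ that does not involve $\cA$, and keeping track of this is what makes the identity work.

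Next I would differentiate $B$. Writing $B_s = \langle\vT_s, \cA\vC\rangle + \langle\vT, \cA\vT\rangle$ and using antisymmetry of $\cA$ to discard $\langle\vT, \cA\vT\rangle = 0$, I would substitute $\vT_s = \va - \langle\va,\vT\rangle\vT$. This gives $B_s = \langle\va, \cA\vC\rangle - \langle\va,\vT\rangle\,B$. The first term simplifies using three facts: $\langle\vC, \cA\vC\rangle = 0$ and $\langle\vv, \cA\vC\rangle = -\langle\cA\vv, \vC\rangle = 0$ (the latter from the standing assumption $\cA\vv=0$), while the remaining contribution is $\langle\cA\vC, \cA\vC\rangle = \|\cA\vC\|^2$. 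Hence $\langle\va,\cA\vC\rangle = \|\cA\vC\|^2$ and $B_s = \|\cA\vC\|^2 - \langle\va,\vT\rangle\,B$.

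The crux of the argument is then the cancellation in $V_s = E_sB + EB_s$. I would split $\langle\va,\vT\rangle = \langle\alpha\vC+\vv,\vT\rangle + \langle\cA\vC,\vT\rangle = \langle\alpha\vC+\vv,\vT\rangle + B$, so that $-\langle\va,\vT\rangle B = -\langle\alpha\vC+\vv,\vT\rangle B - B^2$. The term $-\langle\alpha\vC+\vv,\vT\rangle B$ coming from $EB_s$ is exactly the negative of the term $E_sB = E\langle\alpha\vC+\vv,\vT\rangle B$ coming from differentiating the weight; this is precisely why the exponent $\frac\alpha2\|\vC\|^2+\langle\vv,\vC\rangle$ is chosen, since its logarithmic derivative is $\langle\alpha\vC+\vv,\vT\rangle$. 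After this cancellation only $V_s = E\bigl(\|\cA\vC\|^2 - B^2\bigr)$ survives.

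Finally, since $\vT$ is a unit vector, the Pythagorean identity gives $\|\pro_\vT(\cA\vC)\|^2 = \|\cA\vC\|^2 - \langle\vT,\cA\vC\rangle^2 = \|\cA\vC\|^2 - B^2$, which yields (\ref{eq:dVdtau}). Nonnegativity of $dV/ds$ and the equality case, where $\pro_\vT(\cA\vC)=0$ and hence $\cA\vC$ is a multiple of $\vT$, are then immediate. I do not expect any genuine analytic obstacle here; the only thing to watch is the bookkeeping that isolates the $\langle\alpha\vC+\vv,\vT\rangle$ contribution so that its cancellation against $E_s$ becomes visible.
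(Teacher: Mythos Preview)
Your proof is correct and follows essentially the same approach as the paper's: both differentiate $V=EB$ with the product rule, use $\langle\vT,\cA\vT\rangle=0$ and $\langle\alpha\vC+\vv,\cA\vC\rangle=0$, and arrive at $E(\|\cA\vC\|^2-\langle\vT,\cA\vC\rangle^2)=E\|\pro_\vT(\cA\vC)\|^2$. The only cosmetic difference is that the paper keeps $\vT_s=\pro_\vT(\va)$ in projection form and splits $\va=(\alpha\vC+\vv)+\cA\vC$ before pairing with $\cA\vC$, whereas you expand $\pro_\vT(\va)=\va-\langle\va,\vT\rangle\vT$ first and then split $\langle\va,\vT\rangle$; the cancellations are identical.
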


\begin{proof} Differentiating $V$, we get
  \begin{equation}
    e^{-\frac\alpha2\|\vC\|^2 - \langle\vv, \vC\rangle}\frac{dV} {d s}
    =\langle\alpha\vC+\vv,\vT \rangle
    \bigl\langle\vT , \cA\vC\bigr\rangle
    + \frac{d} {d s}\bigl\langle\vT , \cA\vC\bigr\rangle
    \label{eq:lyapunov-proof-1}
  \end{equation}
  The last term can be expanded using antisymmetry of $\cA$ and the equation for
  $\vT_s$,
  \begin{eqnarray*}
    \frac{d} {d s}\bigl\langle\vT , \cA\vC\bigr\rangle
    &=&\langle\vT_{s}, \cA\vC\rangle + \langle\vT , \cA\vT \rangle \\
    &=&\langle\vT_{s}, \cA\vC\rangle \\
    &=&\bigl\langle \pro_\vT(\alpha\vC + \cA\vC+\vv), \cA\vC
    \bigr\rangle\\
    &=& \bigl\langle\pro_\vT(\alpha\vC+\vv), \cA\vC\bigr\rangle +
    \bigl\langle\pro_\vT(\cA\vC), \cA\vC\bigr\rangle \\
    &=& \bigl\langle\pro_\vT(\alpha\vC+\vv), \cA\vC\bigr\rangle +
    \|\pro_\vT(\cA\vC)\|^2.
  \end{eqnarray*}
  Substituting this in (\ref{eq:lyapunov-proof-1}) we get
  \begin{eqnarray*}
    \fl \qquad e^{-\frac\alpha2\|\vC\|^2 - \langle\vv, \vC\rangle}\frac{dV} {d s}
    &=&\langle\alpha\vC+\vv,\vT \rangle \bigl\langle\vT , \cA\vC\bigr\rangle +
    \bigl\langle\pro_\vT(\alpha\vC+\vv), \cA\vC\bigr\rangle
    + \|\pro_\vT(\cA\vC)\|^2\\
    &=& \Bigl\langle \bigl\langle\alpha\vC+\vv, \vT\bigr\rangle\vT +
    \pro_\vT(\alpha\vC+\vv), \cA\vC \Bigr\rangle + \|\pro_\vT(\cA\vC)\|^2.
  \end{eqnarray*}
  By definition of $\pro_\vT$ one has $\vw = \pro_\vT(\vw) + \langle\vw,
  \vT\rangle\vT$ for any vector $\vw$.  Therefore
  \[
  e^{-\frac\alpha2\|\vC\|^2 - \langle\vv, \vC\rangle}\frac{dV} {d s} =
  \bigl\langle \alpha\vC+\vv, \cA\vC \bigr\rangle + \|\pro_\vT(\cA\vC)\|^2.
  \]
  The first term on the right vanishes.  Indeed, antisymmetry of $\cA$ implies
  that $\langle\vC, \cA\vC\rangle = 0$, while $\cA\vv=0$ also implies
  $\langle\vv, \cA\vC\rangle = - \langle\cA\vv, \vC\rangle =0$.  This implies
  (\ref{eq:dVdtau}).
\end{proof}

\subsection{The trivial rotating-dilating solitons}
\label{sec:trivial-rot-dilators}
Under Curve Shortening any circle centered at the origin remains a
circle with a varying radius.  It therefore generates a dilating
soliton, and if the circle happens to be invariant under the rotations
$e^{t\cA}$ then it automatically defines a dilating-rotating soliton
which satisfies (\ref{eq:soliton-profile}).  A circle can only remain
invariant under the rotations $e^{t\cA}$ if it is contained in one of
the eigenspaces $F_k$.  Moreover the circle must lie in a complex line
in $F_k$ relative to the complex structure $\cJ_k = \Omega_k^{-1}\cA$. 

If the null space of $\cA$ contains a plane, then any Abresch-Langer
curve in this plane defines a dilating soliton which also rotates
according to $\cA$ (by not rotating at all: $e^{t\cA}$ is the identity on
the null space of $\cA$.)  

\begin{lemma}
  \label{lem:critical-solitons}
  Let $\alpha<0$, and let $\vC$ be a rotating dilating soliton on
  which $V(\vC, \vT)$ is constant.  Then $\vC$ is
  \begin{itemize}
  \item a circle with radius $1/\sqrt{-\alpha}$ in a complex line in one
    of the eigenspaces $F_k$, or
  \item an Abresch-Langer curve in the null space of $\cA$ (or a straight
    line through the origin in $N(\cA)$, which can be considered a
    ``degenerate Abresch Langer curve.'')
  \end{itemize}
  The functional $V$ vanishes on the Abresch-Langer solitons.

  For the circles in $F_k$ one has
  \begin{equation}
    V = \pm \frac{\Omega_k}{\sqrt{-e\alpha}},
    \label{eq:V-value-on-circle}
  \end{equation}
  with the sign depending on their orientation.
\end{lemma}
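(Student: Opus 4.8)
The plan is to exploit the differential identity of Lemma~\ref{lem:dVdtau}. With $\vv=0$ we have $V = e^{\frac{\alpha}{2}\|\vC\|^2}\langle\vT, \cA\vC\rangle$, and by \eqref{eq:dVdtau} constancy of $V$ along the soliton forces $\pro_\vT(\cA\vC)\equiv 0$, i.e.\ $\cA\vC = \kappa\vT$ where $\kappa(s) = \langle\vT, \cA\vC\rangle$. Since $e^{\frac{\alpha}{2}\|\vC\|^2}>0$ we also have $\kappa = V e^{-\frac{\alpha}{2}\|\vC\|^2}$, so there are exactly two possibilities: either $V=0$ and $\kappa\equiv 0$, or $V\neq 0$ and $\kappa$ never vanishes. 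In the first case $\cA\vC\equiv 0$, so $\vC(s)\in N(\cA)$ for all $s$; the soliton equation then reduces on $N(\cA)$ to the purely dilating equation $\vC_{ss}=\pro_\vT(\alpha\vC)$, whose solutions are the Abresch-Langer curves (a straight line through the origin being the degenerate instance). This is precisely the branch on which $V$ vanishes.

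So assume from now on that $V\neq 0$, hence $\kappa\neq 0$ everywhere. Substituting $\cA\vC=\kappa\vT$ into the soliton equation \eqref{eq:soliton-profile-w-pro} collapses its right-hand side, since $\pro_\vT(\cA\vC)=\pro_\vT(\kappa\vT)=0$, leaving $\vT_s = \alpha\,\pro_\vT(\vC)$. Differentiating $\cA\vC=\kappa\vT$ and pairing with $\vT$, using $\langle\cA\vT,\vT\rangle=0$ and $\langle\vT_s,\vT\rangle=0$, gives $\kappa_s=0$, so $\kappa$ is a nonzero constant. Applying $\cA$ to $\cA\vC=\kappa\vT$ and substituting $\cA\vT=\kappa\vT_s=\kappa\alpha\,\pro_\vT(\vC)$ then yields the single linear-algebraic identity
\[
\cA^2\vC = \alpha\kappa^2\,\pro_\vT(\vC) = \alpha\kappa^2\bigl(\vC - \langle\vC,\vT\rangle\vT\bigr),
\]
which carries all the remaining information.

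Finally I would decompose this identity over $\R^n = F_1\oplus\cdots\oplus F_{\bar m}\oplus N(\cA)$ from \eqref{eq:rotation-normal-form-2}, where $\cA^2=-\Omega_k^2\identity$ on each $F_k$ and $\cA^2=0$ on $N(\cA)$. The $N(\cA)$-component gives $0=\alpha\kappa^2\vZ$, forcing $\vZ=0$ because $\alpha\kappa\neq 0$, so $\vC\in R(\cA)$. On each $F_k$ the vectors $\vC^{(k)}$ and $\cA\vC^{(k)}=\kappa\vT^{(k)}$ are linearly independent when $\vC^{(k)}\neq 0$ (as $\cJ_k=\Omega_k^{-1}\cA$ is a complex structure), so matching the coefficients of $\vC^{(k)}$ and of $\cA\vC^{(k)}$ separately yields $\langle\vC,\vT\rangle=0$ and $\kappa^2=\Omega_k^2/(-\alpha)$. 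The first relation says $\frac{d}{ds}\|\vC\|^2=0$, so the soliton is a round circle; the second, together with the distinctness of the $\Omega_k$, permits only one nonzero $\vC^{(k)}$, placing the circle in a complex line of a single $F_k$. From $|\kappa|=\|\cA\vC\|=\Omega_k\|\vC\|$ one reads off $\|\vC\|=1/\sqrt{-\alpha}$, and hence $V = e^{-1/2}\kappa = \pm\,\Omega_k/\sqrt{-e\alpha}$, which is \eqref{eq:V-value-on-circle}. I expect the main obstacle to be the middle step: converting the soft pointwise alignment $\cA\vC\parallel\vT$ into the rigid identity $\cA^2\vC=\alpha\kappa^2\,\pro_\vT(\vC)$ with $\kappa$ constant. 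Once that is secured, the eigenspace bookkeeping and the evaluation of $V$ are routine.
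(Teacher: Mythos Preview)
Your proof is correct and follows essentially the same approach as the paper: both establish $\cA\vC = \gamma\vT$ with $\gamma$ constant, split on $\gamma = 0$ versus $\gamma\neq 0$, and in the latter case derive $\cA^2\vC = \alpha\gamma^2\vC$ to pin $\vC$ to a single eigenspace $F_k$. The only real difference is in how you reach $\langle\vC,\vT\rangle = 0$: you extract it by matching coefficients of $\vC^{(k)}$ and $\cJ_k\vC^{(k)}$ in the identity $\cA^2\vC = \alpha\kappa^2\pro_\vT(\vC)$, whereas the paper observes it immediately from $\vT = \gamma^{-1}\cA\vC$ and the antisymmetry of $\cA$ (so $\langle\vC,\vT\rangle = \gamma^{-1}\langle\vC,\cA\vC\rangle = 0$), which then collapses your identity to the clean eigenvector equation $\cA^2\vC = \alpha\gamma^2\vC$ at once. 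Your route works, but noting this shortcut would streamline the argument and eliminate the need for the linear-independence bookkeeping in $F_k$.
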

\begin{proof}
  Since $V$ is constant on $\vC$ we have $\|\cA\vC\|^2 = \bigl\langle\vT,
  \cA\vC\bigr\rangle^2$ and hence $\cA\vC = \gamma(s)\vT$ for some smooth function
  $\gamma$.  Keeping in mind that $\vT$ is a unit vector, so that $\vT_s\perp\vT$,
  we conclude from
  \[
  \frac{d\gamma}{ds} = \bigl\langle\cA\vC, \vT \bigr\rangle_s =
  \underbrace{\bigl\langle\cA\vT, \vT\bigr\rangle}_{=0} + \bigl\langle\cA\vC,
  \vT_{s}\bigr\rangle =\gamma \bigl\langle\vT, \vT_s\bigr\rangle = 0
  \]
  that $\gamma$ must in fact be a constant.

  If $\gamma=0$, then $\cA\vC=0$.  The soliton lies in the kernel of $\cA$, and
  must therefore be an Abresch-Langer curve.  Clearly we have
  $V=0$ on any curve in $N(\cA)$.

  If $\gamma\neq0$, then we have
  \begin{equation}
    \frac{d\vC}{ds} = \frac{1}{\gamma} \; \cA\vC.
    \label{eq:vCs-when-V-is-const}
  \end{equation}
  On the other hand, by taking the inner product with $\vT$ on both sides of the
  soliton equation (\ref{eq:soliton-profile}) we get
  \[
  0 = \bigl\langle\vT, \cA\vC\bigr\rangle + \lambda(s) \|\vT\|^2 = \gamma +
  \lambda(s),
  \]
  where we have used $\vC_s = \gamma^{-1}\cA\vC \implies \vC_s\perp\vC$.  It follows that
  $\lambda = -\gamma$.  Using $\cA\vC = \gamma\vT$ once more we can rewrite the
  soliton equation as
  \[
  \vC_{ss} = \bigl(\alpha +\cA\bigr)\vC + \lambda\vT = \alpha\vC.
  \]
  Differentiating (\ref{eq:vCs-when-V-is-const}) we also get
  \[
  \vC_{ss} = \frac{1}{\gamma}\cA\vC_s = \frac{1}{\gamma^2}\cA^2\vC.
  \]
  Combining these last two equations we see
  \[
  \cA^2\vC = \gamma^2\alpha \vC
  \]
  so that $\vC$ must be an eigenvector of $\cA^2$ with eigenvalue
  $\gamma^2\alpha$.  Thus for some $k$ we have $\vC\in F_k$ and $\gamma^2\alpha
  = - \Omega_k^2$, i.e.
  \[
  \gamma = \pm \frac{\Omega_k}{\sqrt{-\alpha}} .
  \]
  By equation~(\ref{eq:vCs-when-V-is-const}) we have
  \[
  \vC(s) = e^{\pm \sqrt{-\alpha} s \cJ_k} \vC(0),
  \]
  which shows that $\vC$ is a circle in the complex line in $F_k$ that contains
  $\vC(0)$.  The radius of the circle is $\|\vC(0)\|$, and is determined by the
  condition $\|\vC_s\| = 1$:
  \[
  \|\vC_s\|
  = \left\|\pm \sqrt{-\alpha}  \cJ_k e^{\pm \sqrt{-\alpha} s \cJ_k}
  \vC(0)\right\|
  =\sqrt{-\alpha} \|\vC(0)\| =1 \implies \|\vC(0)\| = \frac{1} {\sqrt{-\alpha}}.
  \]
  The value of $V$ on this soliton is
  \[
  V = e^{-\frac\alpha 2 \|\vC\|^2} \bigl\langle\vT, \cA\vC\bigr\rangle
  = \gamma e^{-1/2}
  = \pm \frac{\Omega_k}{\sqrt{-e\alpha}} .
  \]

\end{proof}

\subsection{Compactification of the Soliton Flow}
One obstacle for studying the global behaviour of solitons is that the
phase space $\R^n\times \Sph^{n-1}$ on which the soliton flow
(\ref{eq:soliton-profile-as-system}) is defined is not compact.  To
overcome this we show here that, after reparameterizing the flow, one
can compactify the phase space and extend the flow by adding points at
infinity.

We map $\R^n$ onto the interior of the unit ball via $\vC\mapsto\vP$,
\[
\vP = \frac{\vC} {\sqrt{1+\|\vC\|^2}}.
\]
The inverse of this map is given by
\[
\vC = \frac{\vP} {\sqrt{1-\|\vP\|^2}}.
\]
A short computation shows that if $(\vC, \vT)$ satisfy
(\ref{eq:soliton-profile-as-system}), then
\[
\vP_s = \frac{\vC_s} {\sqrt{1+\|\vC\|^2}}
- \frac{\langle\vC,\vC_s\rangle\vC} {\bigl(1+\|\vC\|^2\bigr)^{3/2}}.
\]
This implies
\[
\sqrt{1-\|\vP\|^2}\; \vP_s
= \bigl(1-\|\vP\|^2\bigr) \bigl(\vT - \langle\vP, \vT\rangle \vP\bigr).
\]
The evolution of $\vT$ can be written as
\[
\sqrt{1-\|\vP\|^2}\; \vT_s
= \pro_\vT \bigl((\alpha + \cA)\sqrt{1-\|\vP\|^2} \vC\bigr)
= \pro_\vT \bigl((\alpha + \cA)\vP\bigr).
\]
Hence, if we introduce a new parameter $\varsigma$ along the soliton
that is related to arc length via
\[
d\varsigma = \frac{ds} {\sqrt{1-\|\vP\|^2}} = \sqrt{1+\|\vC\|^2} \; ds,
\]
then the soliton system (\ref{eq:soliton-profile-as-system}) is
equivalent with the following system for $(\vP, \vT)$
\begin{eqnarray}
  \vP_\varsigma & =&  \bigl(1-\|\vP\|^2\bigr) \bigl(\vT - \langle\vP,
  \vT\rangle \vP\bigr) \\
  \vT_\varsigma & =& \pro_\vT \bigl((\alpha + \cA)\vP\bigr).
\end{eqnarray}%
\label{eq:soliton-profile-compactified}%
This system is obviously well-defined at all $(\vP, \vT) \in
\R^n\times \Sph^{n-1}$.  The original phase space, which was described
in terms of $\vC$ and $\vT$, is embedded in the $(\vP, \vT)$ phase
space as $\{(\vP, \vT) : \|\vP\|<1, \|\vT\|=1\}$.  The closure of this
set,
\[
\B^n\times \Sph^{n-1} = \{(\vP, \vT) : \|\vP\|\leq 1, \|\vT\|=1\}
\]
is invariant under the extended flow
(\ref{eq:soliton-profile-compactified}), and it is also compact.

The almost Lyapunov function $V$ also extends to the compactified
phase space, at least when $\alpha<0$.  Indeed, direct substitution
shows that
\[
V = e^{\alpha \|\vP\|^2/(1-\|\vP\|^2)} \bigl(1-\|\vP\|^2\bigr)^{-1/2}
\langle\cA\vP, \vT\rangle
\]
for $\|\vP\| < 1$.  When $\alpha<0$ the exponential is the dominant
factor, which causes the whole expression to extend to a $C^\infty$
function on $\B^n\times \Sph^{n-1}$.  One has $V=0$ at ``the points at
infinity,'' i.e.~when $\|\vP\|=1$.

Since $V$ is constant on the added points at infinity, one easily
verifies that the continuous extension of $V$ to $\B^n\times
\Sph^{n-1}$ still is an almost Lyapunov function for the flow
(\ref{eq:soliton-profile-compactified}):

\begin{lemma}
  \label{lem:V-critical-extended}
  Let $\vX(\varsigma) = (\vP(\varsigma), \vT(\varsigma))$ be an orbit
  in $\B^n\times \Sph^{n-1}$ of the compactified soliton flow
  (\ref{eq:soliton-profile-compactified}).  Then $\varsigma \mapsto
  V(X(\varsigma))$ is either strictly decreasing or constant along the
  entire orbit.

  If $V(\vX(\varsigma))$ is constant then $\vX$ corresponds to one of
  the trivial solitons from Lemma~\ref{lem:critical-solitons}, or
  the entire orbit $\vX(\varsigma)$ consists of ``points at infinity,''
  i.e.~$\|\vP(\varsigma)\| = 1$ for all $\varsigma\in\R$.
\end{lemma}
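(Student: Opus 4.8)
The plan is to combine the monotonicity of $V$ from Lemma~\ref{lem:dVdtau} with the fact that, for the compactified flow~\eqref{eq:soliton-profile-compactified}, the boundary sphere $\{\|\vP\|=1\}$ and the open ball $\{\|\vP\|<1\}$ are each invariant. First I would record this invariance by differentiating the radial coordinate along the flow,
\[
\frac{d}{d\varsigma}\|\vP\|^2 = 2\langle\vP, \vP_\varsigma\rangle = 2\bigl(1-\|\vP\|^2\bigr)^2\langle\vP, \vT\rangle,
\]
which vanishes identically when $\|\vP\|=1$; hence no orbit crosses the boundary, and every orbit lies entirely in $\{\|\vP\|<1\}$ or entirely in $\{\|\vP\|=1\}$. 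On the boundary $V\equiv 0$ (as noted just before the Lemma), so there $V$ is constant and the orbit consists of points at infinity, which is the second alternative of the statement. It then remains to treat orbits in the open ball, which correspond to genuine solitons $\vC(s)$ in $\R^n$.

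For such an interior orbit, $\varsigma$ is an orientation preserving reparametrization of arc length, since $d\varsigma = ds/\sqrt{1-\|\vP\|^2}>0$, and the orbit is defined for all $\varsigma\in\R$ by compactness of $\B^n\times\Sph^{n-1}$. Lemma~\ref{lem:dVdtau} (with $\vv=0$) therefore gives
\[
\frac{dV}{d\varsigma} = \sqrt{1-\|\vP\|^2}\;\frac{dV}{ds} = \sqrt{1-\|\vP\|^2}\;e^{\frac{\alpha}{2}\|\vC\|^2}\,\|\pro_\vT(\cA\vC)\|^2,
\]
so $dV/d\varsigma$ has a fixed sign along the whole orbit. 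Thus $\varsigma\mapsto V(\vX(\varsigma))$ is monotone, and the remaining task is to sharpen \emph{monotone} to \emph{strictly monotone or constant} and to identify the constant case.

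Suppose the orbit is not strictly monotone, so that $V(\varsigma_1)=V(\varsigma_2)$ for some $\varsigma_1<\varsigma_2$. Monotonicity forces $V$ to be constant on $[\varsigma_1,\varsigma_2]$, whence $dV/d\varsigma\equiv 0$ and $\pro_\vT(\cA\vC)=0$ there; equivalently $\cA\vC=\gamma(s)\vT$ on this subinterval. This is precisely the starting identity in the proof of Lemma~\ref{lem:critical-solitons}, and I would reuse that computation: $\gamma$ is constant, the soliton equation reduces to $\vC_{ss}=\alpha\vC$, and one obtains $\cA^2\vC=\gamma^2\alpha\vC$, so that on $[\varsigma_1,\varsigma_2]$ the arc is either contained in $N(\cA)$ (when $\gamma=0$) or is a circular arc of radius $1/\sqrt{-\alpha}$ in a complex line of some eigenspace $F_k$. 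I would then globalize: the circles $\vC(s)=e^{\pm\sqrt{-\alpha}\,s\cJ_k}\vC(0)$ and the Abresch--Langer curves (or lines) in $N(\cA)$ are global solutions of the first order system~\eqref{eq:soliton-profile-as-system}, so by uniqueness of solutions the interior orbit coincides with one of them for all $s$. Since $V$ vanishes on the Abresch--Langer curves and equals $\pm\Omega_k/\sqrt{-e\alpha}$ on the circles by~\eqref{eq:V-value-on-circle}, $V$ is constant along the entire orbit, and the orbit is one of the trivial solitons of Lemma~\ref{lem:critical-solitons}. If instead $V$ is constant on no subinterval, then it is strictly monotone, completing the dichotomy.

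The step I expect to be the crux is this globalization. The hypothesis only yields $\cA\vC\parallel\vT$ on a subinterval, and a purely local computation does not by itself preclude the arc later leaving this parallel locus; the clean resolution is to observe that the local reduction already identifies the arc with a \emph{global} trivial soliton from Lemma~\ref{lem:critical-solitons}, after which uniqueness for~\eqref{eq:soliton-profile-as-system} finishes the argument. A secondary technical point is that $V(\vX(\varsigma))$ must remain well defined and continuous in the limits $\varsigma\to\pm\infty$ for interior orbits that accumulate on the boundary; this is exactly what the smooth extension of $V$ to $\B^n\times\Sph^{n-1}$ (valid because $\alpha<0$) provides.
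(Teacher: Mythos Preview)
Your proposal is correct and follows the same route the paper intends. The paper does not give a formal proof of this lemma; it only says ``one easily verifies'' and relies on precisely the ingredients you use---the invariance of the boundary sphere, the vanishing of $V$ there, Lemma~\ref{lem:dVdtau} for monotonicity on interior orbits, and Lemma~\ref{lem:critical-solitons} to classify orbits on which $V$ is constant. Your globalization via uniqueness for the first-order system~\eqref{eq:soliton-profile-as-system} is the right way to pass from ``constant on a subinterval'' to ``constant on the whole orbit.'' (Note that the lemma's phrase ``strictly decreasing'' is a slip in the paper: $dV/ds\geq 0$ by Lemma~\ref{lem:dVdtau}, and later the paper uses $V_-\leq V_+$, so ``strictly increasing'' is meant. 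Your treatment in terms of ``strictly monotone or constant'' handles this correctly.)
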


\begin{figure}[t]
  \centering
  \includegraphics[width=\textwidth]{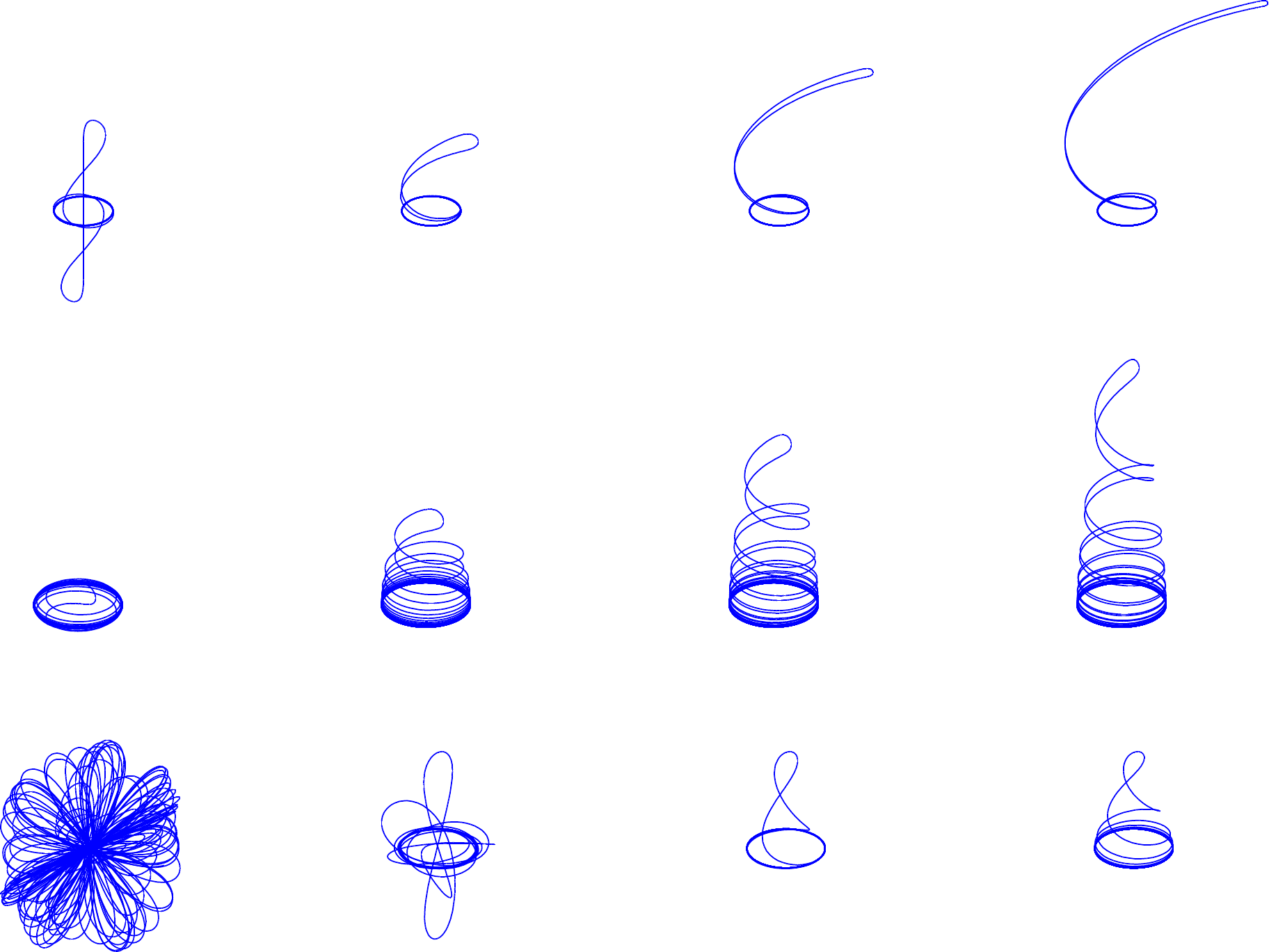}
  \caption{\textbf{Rotating and shrinking solitons limit to a circle.}
    Shown are solitons that rotate around the $z$-axis and shrink
    toward the origin simultaneously.  A circle in the $xy$-plane with
    radius $\sqrt{2}$ is one such soliton.  Both ends of any generic
    soliton of this type are asymptotic to the special circle.}
\end{figure}

\begin{figure}[t]
  \begin{center}
    \includegraphics[width=0.8\textwidth]{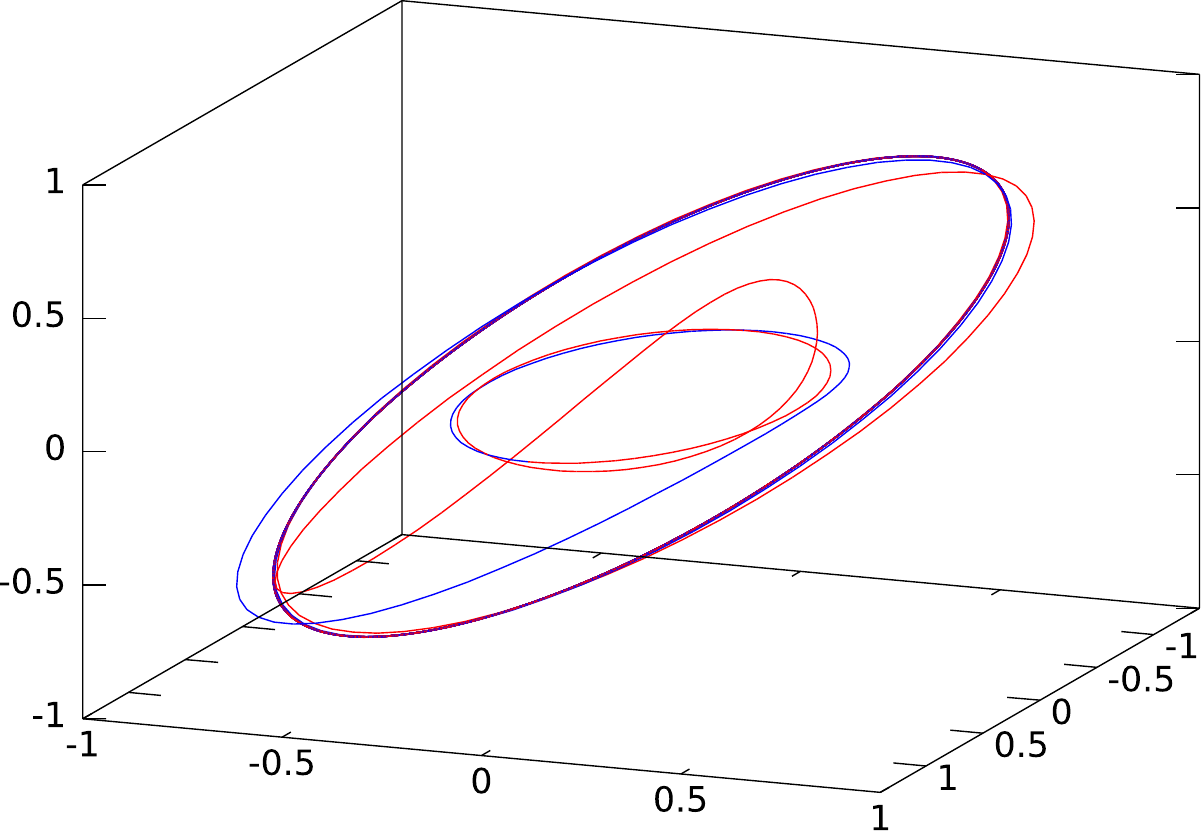}
  \end{center}
  \caption{A three dimensional projection of a shrinking rotating
    soliton in $\R^4$.  Here $\alpha=-1$, $\Omega_1 = \frac12$,
    $\Omega_2=2$.  One end of the soliton is colored blue and the
    other red, so one can see that both ends are asymptotic to the
    same circle, but with opposite orientations.  The limiting circle
    is the unit circle in the eigenspace $F_2$ (which appears as the
    larger ellipse in this projection), while the middle part of the
    soliton remains close to the unit circle in the eigenspace $F_1$.
  }
  \label{fig:4D-shrinker-rotator}
\end{figure}
\subsection{The ends of rotating-dilating solitons}
Let $\vC:\R\to\R^n$ be a solution of (\ref{eq:soliton-profile}) with
$\alpha < 0$, and write $\vX(s) = (\vP(s), \vT(s)) \in \B^n \times
\Sph^{n-1}$ for the corresponding solution to the system
(\ref{eq:soliton-profile-compactified}).  By definition, the
$\omega$-limit set of the orbit $\{\vX(s) : s\in\R\} $ consists of
all possible limits $ \vX_* = \lim_{n\to\infty} \vX(s_n)$ one can
obtain by choosing arbitrary sequences $s_n\nearrow\infty$, i.e.
\begin{eqnarray*}
  \omega\bigl(\vX\bigr) &\stackrel{\rm def}= &
  \Bigl\{ \vX_* \in \B^n \times \Sph^{n-1} : (\exists s_n \nearrow \infty)
  \lim_{n\to\infty} \vX_* = \vX(s_n) \Bigr\} \\
  &=&
  \bigcap_{s>0} \overline{
  \bigl\{\vX(s') : s'\geq s\bigr\}
  }.
\end{eqnarray*}
The second description shows that $\omega(\vX)$ is always a
compact connected subset of $\B^n \times \Sph^{n-1}$, which is invariant
under the flow (\ref{eq:soliton-profile-compactified}).  

The $\alpha$-limit set $\alpha(\vX)$ of the orbit $\{\vX(s):s\in\R\}$ is defined
similarly as the set of all possible limits of $\vX(s_n)$ for sequences
$s_n\searrow -\infty$.

The compactification of the soliton flow, and the almost Lyapunov
function now give us the following description of the global behaviour
of rotating-dilating solitons.

Let $\vC:\R\to\R^n$ be a rotating-dilating soliton, i.e.~a solution
of (\ref{eq:soliton-profile}) with $\alpha<0$, and let $\vX(\varsigma)
= \bigl(\vP(\varsigma), \vT(\varsigma)\bigr)$ be the corresponding
orbit of (\ref{eq:soliton-profile-compactified}).  Then, since
$V$ is monotone along the soliton, and since $V$ is obviously bounded,
the limits
\[
V_\pm \stackrel{\rm def}=
\lim_{s\to\pm\infty} V(\vC(s), \vC_s(s)) 
\]
must exist.  One has $V_-\leq V_+$.  Moreover, $V$ is constant on both
$\omega(\vX)$ and $\alpha(\vX)$ with
\[
V|_{\alpha(\vX)} = V_-,
\qquad
V|_{\omega(\vX)} = V_+.
\]
By Lemma~\ref{lem:V-critical-extended} both $\alpha$ and $\omega$ limit
sets must consist of trivial rotating-dilating solitons, Abresch
Langer curves in $N(\cA)$, or points at infinity.

We consider the possibilities for the $\omega$-limit set (the $\alpha$-limits
are analogous.)

\begin{lemma}
  If $V_+\neq0$ then $V_+ = \pm \Omega_k/\sqrt{-e\alpha}$ for some $k$, and
  $\omega(\vX)$ consists of complex circles with radius $(-\alpha)^{-1/2}$ in
  the subspace $F_k$, as found in Lemma~\ref{lem:critical-solitons}.  

  In the special (but generic) case that all eigenvalues $i\Omega_k$ of
  $\cA$ are simple, the subspaces $F_k$ are two-dimensional, and they
  contain exactly one soliton-circle: when this happens $\omega(\vX)$
  contains that circle, and the soliton $\vC$ is asymptotic to the circle.
\end{lemma}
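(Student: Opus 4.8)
The plan is to read off the conclusion from the monotonicity of the almost Lyapunov function $V$, the classification of its critical orbits in Lemma~\ref{lem:critical-solitons}, and its extension to the compactified flow in Lemma~\ref{lem:V-critical-extended}. The structural facts I need are already assembled in the discussion preceding the lemma: since the compactified phase space $\B^n\times\Sph^{n-1}$ is compact, the $\omega$-limit set $\omega(\vX)$ is nonempty, compact, connected and invariant under the extended flow (\ref{eq:soliton-profile-compactified}), and because $V$ is monotone along the orbit with limit $V_+$, the continuous extension of $V$ is constant and equal to $V_+$ on all of $\omega(\vX)$.

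First I would restrict the flow to $\omega(\vX)$ and apply Lemma~\ref{lem:V-critical-extended} orbit by orbit. Since $V\equiv V_+$ is constant along every orbit contained in $\omega(\vX)$, that lemma forces each such orbit to be one of the trivial solitons of Lemma~\ref{lem:critical-solitons}---a soliton-circle of radius $(-\alpha)^{-1/2}$ in some eigenspace $F_k$, or an Abresch--Langer curve (possibly a straight line) in $N(\cA)$---or else an orbit lying entirely at infinity, $\|\vP\|\equiv1$. The decisive arithmetic observation is that both the Abresch--Langer curves (where $\cA\vC=0$, so $\langle\vT,\cA\vC\rangle=0$) and the points at infinity carry the value $V=0$. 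Under the standing hypothesis $V_+\neq0$ these are therefore excluded, and $\omega(\vX)$ must consist solely of soliton-circles in the subspaces $F_k$.

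It then remains to pin down the index and the orientation. On a circle in $F_k$ one has $V=\pm\Omega_k/\sqrt{-e\alpha}$ by~(\ref{eq:V-value-on-circle}). Because the $\Omega_k$ are distinct and positive, the $2\bar m$ numbers $\pm\Omega_k/\sqrt{-e\alpha}$ are all distinct, so the single value $V_+$ determines the index $k$ and the sign (hence the orientation) uniquely. Thus $V_+=\pm\Omega_k/\sqrt{-e\alpha}$, and $\omega(\vX)$ lies in one fixed eigenspace $F_k$ and is a union of equally oriented soliton-circles of radius $(-\alpha)^{-1/2}$, which is the first assertion.

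For the final statement I would specialize to the generic case where $i\Omega_k$ is a simple eigenvalue, so $F_k$ is two real-dimensional, i.e.~a single complex line, carrying exactly one soliton-circle of the prescribed $V$-value as an oriented orbit. This circle is a single periodic orbit of the flow, and since any nonempty invariant subset of a periodic orbit is the whole orbit, the nonempty invariant (indeed connected) set $\omega(\vX)$ coincides with it; hence $\vC(s)$ is asymptotic to this circle as $s\to\infty$. I expect the main point requiring care to be not this last limit argument but the exclusion of limit orbits at infinity: it depends on $V$ extending continuously to the boundary $\|\vP\|=1$ with value $0$ there, which is precisely why the sign condition $V_+\neq0$ does real work. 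Fortunately this extension is already established in the paragraph preceding Lemma~\ref{lem:V-critical-extended}, where the factor $e^{\alpha\|\vP\|^2/(1-\|\vP\|^2)}$ with $\alpha<0$ dominates and forces $V\to0$ as $\|\vP\|\to1$.
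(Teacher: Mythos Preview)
Your proposal is correct and matches the paper's reasoning. In the paper this lemma is not given a separate proof; it is stated as an immediate consequence of the discussion preceding it (constancy of $V$ on $\omega(\vX)$ with value $V_+$, Lemma~\ref{lem:V-critical-extended}, and the computed values of $V$ on the trivial solitons and at infinity), and your argument spells out precisely those details.
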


The case $V_+ = 0$ is more complicated.  If $V_+=0$ then $\omega(\vX)$ could
contain points at infinity, compact Abresch-Langer curves in $N(\cA)$, or straight
lines through the origin in $N(\cA)$.

\begin{lemma}
  Assume $V_+ = 0$.
  If $\omega(\vX)$ is not contained in $\overline{N(\cA)}\times \Sph^{n-1}$ then
  $\vC(s) \in \cR_-(K)$ for some large enough $K$ and all large enough
  $s$.  In this case the soliton $\vC$ is asymptotic to a log spiral as in
  (\ref{eq:asymptotic-log-spiral-alpha-neg}).
  In particular $\omega(\vX)$ is disjoint from $N(\cA)$.
  \label{lem:spiral-at-infinity}
\end{lemma}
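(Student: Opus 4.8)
The plan is to combine the almost Lyapunov function $V$ with the isolating-block picture of $\cR_-(K)$ for $\alpha<0$. First I would record the sign of $V$ along the soliton: by Lemma~\ref{lem:dVdtau} the function $s\mapsto V(\vC(s),\vT(s))$ is non-decreasing, and since it converges to $V_+=0$ it must satisfy $V\le 0$ everywhere, i.e.
\[
\langle\vT,\cA\vC\rangle\le 0\qquad\text{for all }s.
\]
Next I would locate a point at infinity in $\omega(\vX)$ lying off $N(\cA)$. By Lemma~\ref{lem:V-critical-extended} every orbit contained in $\omega(\vX)$ either corresponds to a trivial soliton from Lemma~\ref{lem:critical-solitons} or consists entirely of points at infinity. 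The circles of Lemma~\ref{lem:critical-solitons} carry $V=\pm\Omega_k/\sqrt{-e\alpha}\neq0$, so, $V_+=0$ being assumed, the only trivial solitons that can occur in $\omega(\vX)$ are Abresch--Langer curves and lines in $N(\cA)$. Hence $\omega(\vX)\subset\bigl(\{\vP\in N(\cA)\}\times\Sph^{n-1}\bigr)\cup\bigl(\{\|\vP\|=1\}\times\Sph^{n-1}\bigr)$, and the hypothesis $\omega(\vX)\not\subset\overline{N(\cA)}\times\Sph^{n-1}$ produces a point $\vX_*=(\vP_*,\vT_*)\in\omega(\vX)$ with $\|\vP_*\|=1$ and $\cA\vP_*\neq0$.

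On the invariant boundary sphere $\|\vP\|=1$ the compactified system~\eqref{eq:soliton-profile-compactified} degenerates to $\vP_\varsigma=0$ and $\vT_\varsigma=\pro_\vT(\va_*)$ with $\va_*=(\alpha+\cA)\vP_*$; since $\alpha\neq0$ makes $\alpha+\cA$ invertible we have $\va_*\neq0$, and this is exactly the Grim Reaper tangent flow. Its rest points are $(\vP_*,\pm\ah_*)$, $\ah_*=\va_*/\|\va_*\|$, and every non-constant boundary orbit runs from the source $(\vP_*,-\ah_*)$ to the sink $(\vP_*,+\ah_*)$. As $\omega(\vX)$ is closed and flow-invariant, it contains the whole boundary orbit through $\vX_*$ and in particular the rest point $(\vP_*,-\ah_*)$, at which $\mu=\langle\ah_*,-\ah_*\rangle=-1$. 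Thus there are times $s_n\to\infty$ with $\|\vC(s_n)\|\to\infty$ and $\vT(s_n)\to-\ah(\vC(s_n))$, so that $\bigl(\vC(s_n),\vT(s_n)\bigr)\in\cR_-(K)$ for every fixed $K$ once $n$ is large.

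The heart of the argument is to upgrade this to $\vC(s)\in\cR_-(K)$ for \emph{all} large $s$. I would first use $V\le0$ to pin down the sign of $\mu$: writing
\[
\mu\,\|\va\|=\langle\vT,\va\rangle=\alpha\langle\vT,\vC\rangle+\langle\vT,\cA\vC\rangle
\le \alpha\langle\vT,\vC\rangle ,
\]
we see (recalling $\alpha<0$) that $\mu>0$ can only occur when $\langle\vT,\vC\rangle<0$, i.e.\ when the curve is moving \emph{inward}; equivalently, the soliton lies in $\cR_+(K)$ only while $\|\vC\|$ is decreasing, where by Lemma~\ref{lem:a-monotone} $\|\va\|$ is decreasing as well. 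To reach infinity the orbit must therefore gain $\|\va\|$ in $\cR_-(K)$, where Lemma~\ref{lem:a-monotone} gives $\tfrac{d}{ds}\|\va\|\ge -\alpha/2>0$. I would then invoke the isolating-block structure from the proof of Lemma~\ref{lem:alpha-neg-R-isolating-block}: a solution can leave $\cR_-(K)$ only through the curvature face $\Rex=\{\nu=K\}$, and it does so transversally with $\nu_s>0$. The task is to show our orbit never again crosses $\Rex$ for large $s$. Here I would use the accumulation at the source rest point $(\vP_*,-\ah_*)$: near it the rescaled curvature $\nu$ is driven toward the boundary value $0$ (the limit is a genuine rest point of the compactified flow with $\vT=-\ah$), so the orbit enters $\{\nu<K\}\cap\cR_-(K)$, and since $\|\va\|$ only grows there while any exit would first require swinging back toward $\mu\to0$ through a short Grim Reaper arc, the orbit is trapped. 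Once $\vC(s)\in\cR_-(K)$ for all $s\ge s_0$, the $\alpha<0$ clause of Lemma~\ref{lem:alpha-pos-R-invariant} gives $\vC=e^{-\sigma(\alpha+\cA)}\vGa+\cO(e^{\alpha\sigma})$ with $\vGa\neq0$, the asserted generalized logarithmic spiral; and since eventually $\|\va\|\ge K$ with $\mu<0$, the orbit stays a definite distance from every soliton with $\cA\vC=0$, yielding $\omega(\vX)\cap\{\vP\in N(\cA)\}=\emptyset$.

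The step I expect to be the genuine obstacle is the trapping claim in the third paragraph---ruling out infinitely many re-entries into the high-curvature region $\{\nu\ge K\}$ (the ``dip-back'' toward $N(\cA)$ that produces the exotic solitons alluded to in the introduction). Controlling the scale-invariant curvature $\nu$ uniformly for large $s$, rather than only along the sequence $s_n$, is where the differential inequalities of Lemma~\ref{lem:diff-ineqs-for-mu-nu} and the monotonicity of $V$ must be pushed hardest; everything before and after that step is bookkeeping with already-established lemmas.
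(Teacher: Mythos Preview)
Your overall outline matches the paper's, but you are missing the single computation that does the heavy lifting in the paper's proof, and without it the trapping argument (which you correctly flag as the crux) does not close.

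The paper does not use the boundary dynamics of the compactified flow. Starting from a sequence $s_n\to\infty$ with $\|\vC(s_n)\|\to\infty$ and $\vC(s_n)/\|\vC(s_n)\|\to\vP_0$ (with $\cA\vP_0\neq0$), it uses Lemma~\ref{lem:GR-here} to adjust $s_n$ slightly so that $(\vC(s_n),\vT(s_n))\in\cR_+(K)\cup\cR_-(K)$, and then rules out $\cR_+(K)$ directly. When $(\vC,\vT)\in\cR_+(K)$ one has $\vT\approx\ah$, and since $\langle\alpha\vC,\cA\vC\rangle=0$,
\[
\langle\vT,\cA\vC\rangle
=\langle\ah,\cA\vC\rangle+\langle\vT-\ah,\cA\vC\rangle
=\frac{\|\cA\vC\|^2}{\|(\alpha+\cA)\vC\|}+\langle\vT-\ah,\cA\vC\rangle
\ge\|\cA\vC\|\Bigl(\frac{\|\cA\vC\|}{\|\va\|}-\|\vT-\ah\|\Bigr).
\]
Combined with $\langle\vT,\cA\vC\rangle\le 0$ this forces $\|\cA\vC\|/\|\vC\|\le\|\alpha+\cA\|\,\|\vT-\ah\|\to 0$, contradicting $\|\cA\vC(s_n)\|/\|\vC(s_n)\|\to\|\cA\vP_0\|>0$. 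Hence $(\vC(s_n),\vT(s_n))\in\cR_-(K)$.

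The paper then disposes of the trapping step with the \emph{same} estimate: if the orbit left $\cR_-(K)$ after $s_n$, Lemma~\ref{lem:GR-here} would carry it into $\cR_+(K)$ with $\|\va\|$ still large (Lemma~\ref{lem:a-monotone} makes $\|\va\|$ grow in $\cR_-$, and the transition arc is short), and the displayed inequality again contradicts $V\le 0$. Your surrogate---deducing $\langle\vT,\vC\rangle<0$ in $\cR_+$ from $\mu\|\va\|\le\alpha\langle\vT,\vC\rangle$---is much weaker: it says $\|\vC\|$ decreases in $\cR_+$, but does not prevent infinitely many oscillations between $\cR_-$ and $\cR_+$. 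The decisive point is not the direction of $\|\vC\|$, but that the \emph{sign} of $\langle\vT,\cA\vC\rangle$ is wrong in $\cR_+(K)$ whenever $\vC/\|\vC\|$ stays bounded away from $N(\cA)$.

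Your boundary-dynamics detour to $(\vP_*,-\ah_*)\in\omega(\vX)$ is a nice idea but also has a gap: if $\vX_*=(\vP_*,+\ah_*)$ is itself the sink of the boundary flow, its orbit is a fixed point and you do not get the source for free. Ruling this out again needs the displayed computation (near the sink $\langle\vT,\cA\vC\rangle\approx\|\cA\vC\|^2/\|\va\|>0$). So the paper's estimate is not an alternative to your approach---it is the missing ingredient at both places.
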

\begin{proof}
  Since $V_+=0$ we know that $\omega(\vX)$ is a subset of
  $(\partial\B^{n} \cup N(\cA)) \times \Sph^{n-1}$.  If there is a
  $(\vP_0,\vT_0) \in \omega(\vX) \cap \partial\B^n$ with $\cA\vP_0\neq 0$,
  then we can choose a sequence $s_n\to\infty$ for which $\|\vC(s_n)\| \to
  \infty$, and $\vC(s_n)/\|\vC(s_n)\| \to \vP_0$.

  In view of $ \va(s_n) = (\alpha+\cA) \vC(s_n) $ we have
  \[
  \lim _{n\to\infty} \|\va(s_n)\| = \infty.
  \]
  If $\nu(s_n)>K$ and if $(\vC(s_n'), \vT(s_n')) \not\in \cR_-(K) \cup
  \cR_+(K)$ then Lemma~\ref{lem:GR-here} implies that for some $s_n' = s_n +
  o(1)$ one has $(\vC(s_n'), \vT(s_n')) \in \cR_+(K)$.
  So after changing the $s_n$ slightly we may assume that $(\vC(s_n),
  \vT(s_n)) \in \cR_-(K) \cup \cR_+(K)$.
  If $(\vC(s_n), \vT(s_n)) \in \cR_+(K)$, then at $s=s_n$ we have
  \begin{eqnarray*}
    \langle\cA\vC, \vT\rangle
    &= \langle\cA\vC, \ah\rangle + \langle\cA\vC, \vT-\ah\rangle \\
    &= \frac{\langle\cA\vC, \alpha\vC + \cA\vC\rangle}{\|(\alpha+\cA)\vC\|}
    + \langle\cA\vC, \vT-\ah\rangle \\
    &= \frac{\|\cA\vC\|^2}{\|(\alpha+\cA)\vC\|}
    + \langle\cA\vC, \vT-\ah\rangle \\
    &\geq \|\cA\vC\|
    \Bigl\{
    \frac{\|\cA\vC\|}{\|(\alpha+\cA)\vC\|} -\|\vT-\ah\|
    \Bigr\}
  \end{eqnarray*}
  On the other hand $V(\vC(s), \vT(s))$ is a nondecreasing function which
  converges to zero, so $V(\vC(s), \vT(s))\leq 0$, and thus
  $\langle\cA\vC, \vT\rangle \leq 0$ for all $s$.  Hence we have shown that
  at $s=s_n$
  \[
  \frac{\|\cA\vC\|}{\|\vC\|}
  \leq \frac{\|(\alpha+\cA)\vC\|}{\|\vC\|}\; \|\vT-\ah\|
  \leq \|\alpha+\cA\| \;  \|\vT-\ah\|.
  \]
  Since we are assuming that $(\vC(s_n), \vT(s_n)) \in \cR_+(K)$ we know that
  $\|\vT(s_n) - \ah(s_n)\| \to 0$, and therefore we end up with 
  \[
  \|\cA\vP_0\| = \left\|\lim_{n\to\infty} \cA\frac{\vC(s_n)}{\|\vC(s_n)\|}
  \right\| = 0.
  \]
  This is impossible since we have assumed $\vP_0 \not \in N(\cA)$.  The
  contradiction shows that we must have $(\vC(s_n), \vT(s_n))\in\cR_-(K)$ for
  all $n$.  Moreover, if $(\vC(s_n), \vT(s_n))\in\cR_-(K)$ and if $(\vC(s),
  \vT(s))$ were to leave $\cR_-(K)$ for some $s\geq s_n$ then it follows
  from Lemma~\ref{lem:GR-here} that $(\vC(s), \vT(s))$ would end up in
  $cR_+(K)$.  The previous arguments show that this again contradicts
  $V_+\leq0$.  We can therefore conclude that $(\vC(s), \vT(s))$ must remain in
  $\cR_-(K)$ for all $s\geq s_n$.  
  Lemma~\ref{lem:alpha-pos-R-invariant} implies that the soliton is
  asymptotic to a generalized logarithmic spiral.
\end{proof}

If $\omega(\vX)$ is contained in $N(\cA)\times \Sph^{n-1}$ then all we
can say now is that $\omega(\vX)$ consists of lines and Abresch-Langer
curves (and $\omega(\vX)$ must of course be compact and connected.)  It would be
natural to conjecture that if $\omega(\vX)$ contains an Abresch-Langer
curve, then that curve is all of $\omega(\vX)$, and the soliton converges
to that Abresch-Langer curve.  It is also natural to conjecture that
$\omega(\vX)$ can never contain a straight line, since the soliton would
have to march up and down this line infinitely often.
However, preliminary computations involving matched asymptotic expansions
seem to show that even in the 3 dimensional case a shrinking-rotating
soliton exists for which $\alpha(\vX)$ is a circle in the $xy$-plane, while
$\omega(\vX)$ consists of the entire $z$-axis.  The construction and
analysis of this solution will be the subject of a future paper.

%\section*{References}
\bibliographystyle{amsplain}
\bibliography{SpaceCurveReferences}

\end{document}